%
%
%


\documentclass{amsart}
\usepackage{geometry}
\geometry{left=3.7cm, right=3.7cm, top=3.7cm, bottom=3.7cm}

\usepackage{amssymb} 
\usepackage[cmtip,all]{xy} 

\usepackage{tikz}
\usetikzlibrary{arrows}
 \usetikzlibrary{snakes}
 \usetikzlibrary{shapes}






\newtheorem{theorem}{Theorem}[section]
\newtheorem{lemma}[theorem]{Lemma}

\theoremstyle{definition}
\newtheorem{definition}[theorem]{Definition}

\newtheorem{proposition}[theorem]{Proposition} 
\newtheorem{corollary}[theorem]{Corollary}

\theoremstyle{remark}
\newtheorem{remark}[theorem]{Remark}

\numberwithin{equation}{section}

 

\begin{document}

\title[Singularity categories]{Homological epimorphisms in functor categories and singularity categories}



\author{Valente Santiago Vargas}
\address{Departamento de Matem\'aticas, Facultad de Ciencias, Universidad Nacional Aut\'onoma de M\'exico,
Circuito Exterior, Ciudad Universitaria, C.P. 04510, Ciudad de M\'exico, MEXICO.}
\curraddr{}
\email{valente.santiago@ciencias.unam.mx}
\thanks{The authors thank to the project PAPIIT-Universidad Nacional Aut\'onoma de M\'exico IN100124.}

\author{Juan Andr\'es Orozco Guti\'errez}
\address{Departamento de Matem\'aticas, Facultad de Ciencias, Universidad Nacional Aut\'onoma de M\'exico,
Circuito Exterior, Ciudad Universitaria, C.P. 04510, Ciudad de M\'exico, MEXICO.}
\curraddr{}
\email{juan\_andres@ciencias.unam.mx}
\thanks{}

\subjclass[2020]{Primary 18A25, 18E05; Secondary 16D90,16G10}

\date{}

\dedicatory{}

\commby{}

\begin{abstract}

Given a homological epimorphism $\pi:\mathcal{C}\longrightarrow \mathcal{C}/\mathcal{I}$ between $K$-categories, we show that if the ideal $\mathcal{I}$ satisfies certain conditions, then there exists an equivalence between the  singularity categories $\mathbf{D}_{sg}(\mathrm{Mod}(\mathcal{C}))$ and $ \mathbf{D}_{sg}(\mathrm{Mod}(\mathcal{C}/\mathcal{I}))$. This result generalizes the one obtained by Xiao-Wu Chen in \cite{Xiao}. We apply our result to the one point extension category and show that there is a singular equivalence between a $K$-category $\mathcal{U}$ and its one point extension category $\Lambda:=\left[ \begin{smallmatrix}
\mathcal{C}_{K} & 0 \\ 
\underline{M} & \mathcal{U}
\end{smallmatrix}\right].$
 \end{abstract}

\maketitle

\section{Introduction}\label{sec:1}

The singularity category $\mathbf{D}_{sg}(A)$ of an algebra A over a field $K$, introduced by
R.O. Buchweitz in \cite{Buchweitz}, is defined as the Verdier quotient 
$$\mathbf{D}_{sg}(A)=\mathbf{D}^{b}(\mathrm{mod}(A))/\mathrm{perf}(A)$$ of the bounded derived category $\mathbf{D}^{b}(\mathrm{mod}(A))$ by the category of perfect complexes.
In recent years, D. Orlov (\cite{Orlov1} ) rediscovered the notion of singularity categories in his
study of B-branes on Landau-Ginzburg models in the framework of the Homological
Mirror Symmetry Conjecture. The singularity category measures the homological
singularity of an algebra in the sense that an algebra A has finite global dimension
if and only if its singularity category $\mathbf{D}_{sg}(A)$ vanishes.\\
A celebrated theorem of Buchweitz (see \cite{Buchweitz}) shows that if $R$ is an Iwanaga-Gorenstein ring, then the stable
category of Cohen-Macaulay R-modules is triangle equivalent to the singularity category of $R$.\\
Let $\mathcal{C}$ be an additive category. We denote by $\mathrm{Mod}(\mathcal{C})$ the category of left $\mathcal{C}$-modules. The notion of singular equivalences for rings is further extended to
additive categories $\mathcal{C}$ by using $\mathrm{Mod}(\mathcal{C})$ as follows: We take the Verdier quotient
$$\mathbf{D}_{sg}(\mathrm{Mod}(\mathcal{C}))=\mathbf{D}^{b}(\mathrm{Mod}(\mathcal{C}))/\mathrm{Perf}(\mathrm{Mod}(\mathcal{C}))$$ and we call this the singularity category of $\mathcal{C}$. We say that two additive categories $\mathcal{C}$ and $\mathcal{C'}$ are singularly equivalent if there exists a triangle equivalence
$$\mathbf{D}_{sg}(\mathrm{Mod}(\mathcal{C}))\simeq \mathbf{D}_{sg}(\mathrm{Mod}(\mathcal{C'})).$$
Then one natural question is: when two additive categories are singularly equivalent? In general this is a difficult question.\\ The main purpose of this paper is to explore this question for certain additive categories  that are quotient by strongly idempotent ideals $\mathcal{I}$. We show that if $\mathcal{I}$ is a strongly idempotent ideal which has a finite projective dimension in $\mathrm{Mod}(\mathcal{C}^{e})$,  then there exists a singular equivalence between $\mathcal{C}$ and $\mathcal{C}/I$ (see Theorem \ref{teoremaprincipal}). This result is a generalization of the result obtained in \cite{Xiao} by Xiao-Wu Chen. It is well known that strongly idempotent ideals appears in the study of triangular matrix categories (see \cite{EdgarValente}). In particular, the one point-extension category is a triangular matrix category. In the final part of this paper we show that if one consider the one-point extension category $\Lambda:=\left[ \begin{smallmatrix}
\mathcal{C}_{K} & 0 \\ 
\underline{M} & \mathcal{U}
\end{smallmatrix}\right],$ then there exists an equivalence of triangulated categories $\mathbf{D}_{sg}(\mathrm{Mod}(\Lambda))\simeq \mathbf{D}_{sg}(\mathrm{Mod}(\mathcal{U}))$ (see Corollary \ref{singularonepoint}). We give an explicit example in the context of representation of infinite quivers.

\section{Preliminaries}
Throughout this paper we will consider small $K$-categories  $\mathcal{C}$ over a field $K$, which means that the class of objects of $\mathcal{C}$ forms a set, the morphisms set $\mathrm{Hom}_{\mathcal{C}}(X,Y)$ is a $K$-vector space and the composition of morphisms is $K$-bilinear.  For conciseness, we will sometimes write $\mathcal{C}(X,Y)$ instead of $\mathrm{Hom}_{\mathcal{C}}(X,Y)$. Furthermore, we refer to \cite{Mitchelring} for basic properties of $K$-categories.\\
Let $\mathcal{A}$ and $\mathcal{B}$ be $K$-categories. A covariant $K$-functor is a funtor $F:\mathcal{A}\rightarrow \mathcal{B}$ such that $F:\mathcal{A}(X,Y)\rightarrow \mathcal{B}(F(X),F(Y))$ is a $K$-linear transformation.
For $K$-categories $\mathcal{A}$ and  $\mathcal{B}$, we consider the category of all the covariant $K$-functors, which we denote by $\mathrm{Fun}_{K}(\mathcal{A},\mathcal{B})$. Given an arbitrary small additive category $\mathcal{C}$, the category of all additive covariant functors  $\mathrm{Fun}_{\mathbb{Z}}(\mathcal{C},\mathbf{Ab})$ is denoted by $\mathrm{Mod}(\mathcal{C})$ and is called the category of left $\mathcal{C}$-modules. When
 $\mathcal{C}$ is a $K$-category, there is an isomorphism of categories $\mathrm{Fun}_{\mathbb{Z}}(\mathcal{C},\mathbf{Ab})\simeq \mathrm{Fun}_{K}(\mathcal{C},\mathrm{Mod}(K))$ where $\mathrm{Mod}(K)$ denotes the category of $K$-vector spaces. Thus, we can identify $\mathrm{Mod}(\mathcal{C})$ with $\mathrm{Fun}_{K}(\mathcal{C},\mathrm{Mod}(K))$.  If $\mathcal{C}$ is a $K$-category, we always consider its opposite  category $\mathcal{C}^{op}$, which is also a $K$-category; and we construct the category of right $\mathcal{C}$-modules $\mathrm{Mod}(\mathcal{C}^{op}):=\mathrm{Fun}_{K}(\mathcal{C}^{op},\mathrm{Mod}(K))$. It is well-known that $\mathrm{Mod}(\mathcal{C})$ is an abelian category with enough projectives and injectives; see for example,\cite[Proposition 2.3]{MitBook} on page 99 and also page 102 in \cite{MitBook}.\\
 
If $\mathcal{C}$ and $\mathcal{D}$ are $K$-categories, B. Mitchell defined in \cite{Mitchelring} the $K$-category tensor product  $\mathcal{C}\otimes_{K}\mathcal{D}$ with objects that are those of $\mathcal{C}\times \mathcal{D}$, and the set of morphisms from $(C,D)$ to $(C',D')$ is the tensor product of $K$-vector spaces $\mathcal{C}(C,C')\otimes_{K}\mathcal{D}(D,D')$. The $K$-bilinear composition in $\mathcal{C}\otimes_{K} \mathcal{D}$ is given as follows: $(f_{2}\otimes g_{2})\circ (f_{1}\otimes g_{1}):=(f_{2}\circ f_{1})\otimes(g_{2}\circ g_{1})$ 
for all $f_{1}\otimes g_1\in \mathcal{C}(C,C')\otimes \mathcal{D}(D,D')$ and  $f_{2}\otimes g_2\in\mathcal{C}(C',C'')\otimes_{K} \mathcal{D}(D',D'')$.\\

Now we recall an important construction given in  \cite{Mitchelring} on p. 26 that will be used throughout this paper. Let $\mathcal{C}$ and $\mathcal{A}$ be  $K$-categories where $\mathcal{A}$ is cocomplete. The evaluation $K$-functor $E:\mathrm{Fun}_{K}(\mathcal{C}^{op},\mathcal{A})\otimes_{K}\mathcal{C}\longrightarrow \mathcal{A}$ can be extended to a $K$-functor
\begin{equation}\label{tensorMitchel}
-\otimes_{\mathcal{C}}-:\mathrm{Fun}_{K}(\mathcal{C}^{op},\mathcal{A})\otimes_{K}\mathrm{Mod}(\mathcal{C})\longrightarrow \mathcal{A}.
\end{equation}
By definition, we have an isomorphism $F\otimes_{\mathcal{C}}\mathcal{C}(X,-)\simeq F(X)$ for all $X\in \mathcal{C}$,
which is natural in $F$ and $X$.\\

\subsection{Derived categories}
Let $\mathcal{A}$ be an additive category, and let $K(\mathcal{A})$ be the homotopy category of $\mathcal{A}$. The subcategories
$K^{+}(\mathcal{A})$, $K^{-}(\mathcal{A})$ and $K^{b}(\mathcal{A})$ of $K(\mathcal{A})$ are
generated by the bounded below complexes, the bounded above complexes, and the bounded complexes, respectively.
For an abelian category $\mathcal{A}$, the derived category $\mathbf{D}(\mathcal{A})$ (resp. $\mathbf{D}^{+}(\mathcal{A})$, $\mathbf{D}^{-}(\mathcal{A})$ and $\mathbf{D}^{b}(\mathcal{A})$) is the quotient of $K(\mathcal{A})$ (resp. $K^{+}(\mathcal{A})$, $K^{-}(\mathcal{A})$ and $K^{b}(\mathcal{A})$) by the multiplicative set of quasi-isomorphisms.
Therefore $K^{\ast}(\mathcal{A})$ and $\mathbf{D}^{\ast}(\mathcal{A})$ are triangulated categories, where 
$$\ast=\text{nothing},+,-,\,\text{or}\,b,$$
see (\cite{Hartshorne}, \cite{Verdier}).\\
In general, we denote $K^{\ast}(\mathcal{A})$ as a localizing subcategory of $K(\mathcal{A})$, meaning that $K^{\ast}(\mathcal{A})$  is a full triangulated subcategory of $K(\mathcal{A})$, and the functor $\mathbf{D}^{\ast}(\mathcal{A})\longrightarrow  \mathbf{D}(\mathcal{A})$ is fully faithfull, where $\mathbf{D}^{\ast}(\mathcal{A})$ is the quotient of $K^{\ast}(\mathcal{A})$ by a multiplicative set of quasi-isomorphisms ( \cite[I, Sect. 5]{Hartshorne}, \cite[II, Sect. 1, No. 1]{Verdier}).
For further details on the triangulated structure of  $\mathbf{D}^{\ast}(\mathcal{A})$ see, for example, \cite{Gelfand}.
Let $\mathcal{T}$ be a triangulated category with equivalence  $\Sigma$. A non-empty full subcategory $\mathcal{S}$ of $\mathcal{T}$ is a triangulated subcategory if the following conditions hold.
\begin{enumerate}
\item [(a)] $\Sigma^{n}(X)\in \mathcal{S}$ for all $X\in \mathcal{S}$ and for all $n\in \mathbb{Z}$,

\item [(b)] Let $\xymatrix{X\ar[r] & Y\ar[r] & Z\ar[r] & \Sigma(X)}$ be a triangle in $\mathcal{T}$.  If two objects of
$\{X, Y, Z\}$ belong to $\mathcal{S}$, then also the third.
\end{enumerate}
A triangulated subcategory $\mathcal{S}$ of $\mathcal{T}$ is $\textbf{thick}$ if, for any morphisms  $\xymatrix{X\ar[r]^{\pi} & Y\ar[r]^{i} & X}$  in $\mathcal{T}$ where  $\pi\circ i=1_{Y}$ and $X\in \mathcal{S}$, it follows that $Y\in \mathcal{S}$

\section{Homological epimorphisms in functor categories}

A  $\textbf{two}$ $\textbf{sided}$ $\textbf{ideal}$  $\mathcal{I}(-,?)$ of $\mathcal{C}$ is a $K$-subfunctor of the two variable functor $\mathcal{C}(-,?):\mathcal{C}^{op}\otimes_{K}\mathcal{C}\rightarrow\mathrm{Mod}(K)$, such that the following conditions hold: (a) if $f\in \mathcal{I}(X,Y)$ and $g\in\mathcal C(Y,Z)$, then  $gf\in \mathcal{I}(X,Z)$; and (b)
if $f\in \mathcal{I}(X,Y)$ and $h\in\mathcal{C}(U,X)$, then  $fh\in \mathcal{I}(U,Z)$. If $\mathcal{I}$ is a two-sided ideal,  we can form the $\textbf{quotient category}$  $\mathcal{C}/\mathcal{I}$, whose objects are those of $\mathcal{C}$ and where $(\mathcal{C}/\mathcal{I})(X,Y):=\mathcal{C}(X,Y)/\mathcal{I}(X,Y)$, with composition induced by that of $\mathcal{C}$ (see \cite{Mitchelring}). There is a canonical projection functor $\pi:\mathcal{C}\rightarrow \mathcal{C}/\mathcal{I}$ such that $\pi(X)=X$ for all $X\in \mathcal{C}$ and  $\pi(f)=f+\mathcal{I}(X,Y):=\bar{f}$ for all $f\in \mathcal{C}(X,Y)$. We also recall that there exists a canonical isomorphism of categories $(\mathcal{C}/\mathcal{I})^{op}\simeq \mathcal{C}^{op}/\mathcal{I}^{op}$.\\
By taking $\mathcal{A}=\mathrm{Mod}(K)$ in equation \ref{tensorMitchel},  we have a functor 
$$-\otimes_{\mathcal{C}}-:\mathrm{Mod}(\mathcal{C}^{op})\times \mathrm{Mod}(\mathcal{C})\longrightarrow \mathrm{Mod}(K).$$
For properties of this tensor product we refer the reader to \cite{AuslanderRep1}.
Therefore,  for $N\in \mathrm{Mod}(\mathcal{C}^{op})$  we consider the functor
$N\otimes_{\mathcal{C}}-:\mathrm{Mod}(\mathcal{C})\longrightarrow \mathrm{Mod}(K)$. We denote by $\mathrm{Tor}_{i}^{\mathcal{C}}(N,-):\mathrm{Mod}(\mathcal{C})\longrightarrow \mathrm{Mod}(K)$ the $i$-th left derived functor of $N\otimes_{\mathcal{C}} -$. For $M\in \mathrm{Mod}(\mathcal{C})$ we now denote by
$\mathrm{Ext}^{i}_{\mathrm{Mod}(\mathcal{C})}(M,-):\mathrm{Mod}(\mathcal{C})\longrightarrow \mathrm{Mod}(K)$ the $i$-th right derived functor of $\mathrm{Hom}_{\mathrm{Mod}(\mathcal{C})}(M,-):\mathrm{Mod}(\mathcal{C})\longrightarrow \mathrm{Mod}(K)$.\\

We recall the construction of the following functors given in \cite[Definition 3.9]{RSS} and \cite[Definition 3.10]{RSS}.
The functor $ \frac{\mathcal{C}}{\mathcal{I}}\otimes_{\mathcal{C}}-:\mathrm{Mod}(\mathcal{C})\longrightarrow\mathrm{Mod}(\mathcal{C}/\mathcal{I}) $  is given as follows: for $M\in \mathrm{Mod}(\mathcal{C})$, we set 
$\left( \frac{\mathcal{C}}{\mathcal{I}}\otimes _{\mathcal{C}}M\right)(C):= \frac{\mathcal{C}(-,C)}{\mathcal{I}(-,C)}\otimes_{\mathcal{C}}M$  for all $C\in \mathcal{C}/\mathcal{I}$. We also define the functor $\mathcal{C}(\frac{\mathcal{C}}{\mathcal{I}},-):\mathrm{Mod}\left(\mathcal{C}\right) \longrightarrow \mathrm{Mod}\left(\mathcal{C}/\mathcal{I}\right) $ as follows: for $ M\in \mathrm{Mod}(\mathcal{C})$, we set 
$\mathcal{C}(\frac{\mathcal{C}}{\mathcal{I}},M)(C):=\mathcal{C}\left(\frac{\mathcal{C}(C,-)}{\mathcal{I}(C,-)},M\right)$ for all $C\in \mathcal{C}/\mathcal{I}$.

\begin{definition}$\textnormal{\cite[Definition 3.15]{RSS}}$
We denote by $\mathbb{EXT}^{i}_{\mathcal{C}}(\mathcal{C}/\mathcal{I},-):\mathrm{Mod}(\mathcal{C})\rightarrow \mathrm{Mod}(\mathcal{C}/\mathcal{I})$ the $i$-th right derived functor of $\mathcal{C}(\frac{\mathcal{C}}{\mathcal{I}},-)$ and by $\mathbb{TOR}_{i}^{\mathcal{C}}(\mathcal{C}/\mathcal{I},-):\mathrm{Mod}(\mathcal{C})\rightarrow \mathrm{Mod}(\mathcal{C}/\mathcal{I})$ the $i$-th left derived functor of $ \frac{\mathcal{C}}{\mathcal{I}}\otimes_{\mathcal{C}}$.
\end{definition}

We have the following description of the above functors
\begin{remark}\label{descEXT}
Consider the functors $\mathbb{EXT}^{i}_{\mathcal{C}}(\mathcal{C}/\mathcal{I},-):\mathrm{Mod}(\mathcal{C})\longrightarrow \mathrm{Mod}(\mathcal{C}/\mathcal{I})$ and $\mathbb{TOR}_{i}^{\mathcal{C}}(\mathcal{C}/\mathcal{I},-):\mathrm{Mod}(\mathcal{C})\longrightarrow \mathrm{Mod}(\mathcal{C}/\mathcal{I})$. The following  holds.
\begin{enumerate}
\item [(a)] For $M\in\mathrm{Mod}(\mathcal{C})$ we have that $\mathbb{EXT}^{i}_{\mathcal{C}}(\mathcal{C}/\mathcal{I},M)(C)=\mathrm{Ext}^{i}_{\mathrm{Mod}(\mathcal{C})}\left(\frac{\mathrm{Hom}_{\mathcal{C}}(C,-)}{\mathcal{I}(C,-)},M\right)$ for every $C\in \mathcal{C}/\mathcal{I}$.

\item [(b)] For $M\in\mathrm{Mod}(\mathcal{C})$ we have that $\mathbb{TOR}_{i}^{\mathcal{C}}(\mathcal{C}/\mathcal{I},M)(C)=\mathrm{Tor}_{i}^{\mathcal{C}}\left(\frac{\mathrm{Hom}_{\mathcal{C}}(-,C)}{\mathcal{I}(-,C)},M\right)$ for every $C\in \mathcal{C}/\mathcal{I}$. 

\end{enumerate}
\end{remark}
Let us consider $\pi_{1}:\mathcal{C}\longrightarrow \mathcal{C}/\mathcal{I}$ and $\pi_{2}:\mathcal{C}^{op}\longrightarrow \mathcal{C}^{op}/\mathcal{I}^{op}$ the canonical projections. From Section 5 in \cite{RSS}, we obtain the following definition, which is a generalization of a notion given for artin algebras by Auslander-Platzeck-Todorov in  \cite{APG}.

\begin{definition}$\textnormal{\cite[Definition 5.1]{RSS}}$ \label{kidemcat}
Let $\mathcal{C}$ be a $K$-category  and let $\mathcal{I}$ be an ideal in $\mathcal{C}$.
We say that $\mathcal{I}$ is $\textbf{strongly idempotent}$ if 
$$\varphi^{i}_{F,(\pi_{1})_{\ast}(F')}:\mathrm{Ext}^{i}_{\mathrm{Mod}(\mathcal{C}/\mathcal{I})}(F,F')\longrightarrow \mathrm{Ext}^{i}_{\mathrm{Mod}(\mathcal{C})}((\pi_{1})_{\ast}(F),(\pi_{1})_{\ast}(F'))$$ is an isomorphism for all $F,F'\in \mathrm{Mod}(\mathcal{C}/\mathcal{I})$ and for all $0\leq i < \infty$.
\end{definition}

Now,
from section 5 in \cite{RSS}, for $F\in \mathrm{Mod}((\mathcal{C}/\mathcal{I})^{op})$  and  $F'\in \mathrm{Mod}(\mathcal{C}/\mathcal{I})$ we have the morphism $\psi_{F,(\pi_{1})_{\ast}(F')}^{i}:\mathrm{Tor}^{\mathcal{C}}_{i}(F\circ \pi_{2},F'\circ \pi_{1})\longrightarrow \mathrm{Tor}^{\mathcal{C}/\mathcal{I}}_{i}(F, F')$. 
We obtain the following result that is a kind of generalization of Theorem 4.4 of Geigle and Lenzing in \cite{GeigleLen}.

\begin{proposition}\label{caractidem2}$\textnormal{\cite[Proposition 3.4]{EdgarValente}}$
Let $\mathcal{C}$ be a  $K$-category  and $\mathcal{I}$ an ideal.  The following are equivalent.
\begin{enumerate}
\item [(a)] $\mathcal{I}$ is strongly idempotent 

\item [(b)] $\mathbb{EXT}^{i}_{\mathcal{C}}(\mathcal{C}/\mathcal{I},F'\circ \pi_{1})=0$ for $1\leq i<\infty$ and for $F'\in \mathrm{Mod}(\mathcal{C}/\mathcal{I})$.

\item [(c)] $\mathbb{EXT}^{i}_{\mathcal{C}}(\mathcal{C}/\mathcal{I},J\circ \pi_{1})=0$ for $1\leq i<\infty$ and for each $J\in \mathrm{Mod}(\mathcal{C}/\mathcal{I})$ which is injective.

\item [(d)] $\psi_{F,(\pi_{1})_{\ast}(F')}^{i}:\mathrm{Tor}^{\mathcal{C}}_{i}(F\circ \pi_{2},F'\circ \pi_{1})\longrightarrow \mathrm{Tor}^{\mathcal{C}/\mathcal{I}}_{i}(F, F')$ is an isomorphism for all $0\leq i<\infty$ and $F\in \mathrm{Mod}((\mathcal{C}/\mathcal{I})^{op})$ as well as  $F'\in \mathrm{Mod}(\mathcal{C}/\mathcal{I})$.

\item [(e)] $\mathbb{TOR}_{i}^{\mathcal{C}}(\mathcal{C}/\mathcal{I},F'\circ\pi_{1})=0$ for $1\leq i<\infty$  and for all $F'\in \mathrm{Mod}(\mathcal{C}/\mathcal{I})$.

\item [(f)] $\mathbb{TOR}_{i}^{\mathcal{C}}(\mathcal{C}/\mathcal{I},P\circ\pi_{1})=0$ for $1\leq i<\infty$ and for all $P\in \mathrm{Mod}(\mathcal{C}/\mathcal{I})$ which is projective.

\item [(g)] The canonical functor $\pi_{\ast}:\mathbf{D}^{b}(\mathrm{Mod}(\mathcal{C}/\mathcal{I}))\longrightarrow \mathbf{D}^{b}(\mathrm{Mod}(\mathcal{C}))$
is full and faithful.

\end{enumerate}
\end{proposition}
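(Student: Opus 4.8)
\emph{Setup.} The whole argument runs on the adjoint situation around the restriction functor $\pi_{\ast}:=(\pi_{1})_{\ast}\colon\mathrm{Mod}(\mathcal{C}/\mathcal{I})\longrightarrow\mathrm{Mod}(\mathcal{C})$. Since $\pi_{1}$ is the identity on objects and full, $\pi_{\ast}$ is exact and fully faithful, and the functors $\frac{\mathcal{C}}{\mathcal{I}}\otimes_{\mathcal{C}}-$ and $\mathcal{C}(\frac{\mathcal{C}}{\mathcal{I}},-)$ recalled above are, respectively, a left and a right adjoint of $\pi_{\ast}$ (the Kan extensions along $\pi_{1}$), with $\mathbb{TOR}^{\mathcal{C}}_{\ast}(\mathcal{C}/\mathcal{I},-)$ and $\mathbb{EXT}^{\ast}_{\mathcal{C}}(\mathcal{C}/\mathcal{I},-)$ their derived functors. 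Full faithfulness of $\pi_{\ast}$ says exactly that the counit $\frac{\mathcal{C}}{\mathcal{I}}\otimes_{\mathcal{C}}\pi_{\ast}(-)\Rightarrow\mathrm{id}$ and the unit $\mathrm{id}\Rightarrow\mathcal{C}(\frac{\mathcal{C}}{\mathcal{I}},\pi_{\ast}(-))$ are isomorphisms; in particular $\mathbb{TOR}^{\mathcal{C}}_{0}(\mathcal{C}/\mathcal{I},\pi_{\ast}F)\cong F\cong\mathbb{EXT}^{0}_{\mathcal{C}}(\mathcal{C}/\mathcal{I},\pi_{\ast}F)$, naturally in $F\in\mathrm{Mod}(\mathcal{C}/\mathcal{I})$. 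Everything below is bookkeeping with this toolbox.

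\emph{The block $(a)\Leftrightarrow(b)\Leftrightarrow(c)$.} For $(a)\Rightarrow(b)$: by Remark \ref{descEXT}(a) and the identification $\frac{\mathrm{Hom}_{\mathcal{C}}(C,-)}{\mathcal{I}(C,-)}=\pi_{\ast}\big((\mathcal{C}/\mathcal{I})(C,-)\big)$ one has $\mathbb{EXT}^{i}_{\mathcal{C}}(\mathcal{C}/\mathcal{I},\pi_{\ast}F')(C)=\mathrm{Ext}^{i}_{\mathrm{Mod}(\mathcal{C})}\big(\pi_{\ast}((\mathcal{C}/\mathcal{I})(C,-)),\pi_{\ast}F'\big)$, which by $(a)$ equals $\mathrm{Ext}^{i}_{\mathrm{Mod}(\mathcal{C}/\mathcal{I})}\big((\mathcal{C}/\mathcal{I})(C,-),F'\big)$, and this vanishes for $i\ge1$ because the representable $(\mathcal{C}/\mathcal{I})(C,-)$ is projective. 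For $(b)\Rightarrow(a)$: from the adjunction $\mathrm{Hom}_{\mathcal{C}}(\pi_{\ast}F,-)\cong\mathrm{Hom}_{\mathcal{C}/\mathcal{I}}\big(F,\mathcal{C}(\frac{\mathcal{C}}{\mathcal{I}},-)\big)$ and exactness of $\pi_{\ast}$ (so its right adjoint preserves injectives) one obtains the Grothendieck spectral sequence
$$E_{2}^{p,q}=\mathrm{Ext}^{p}_{\mathrm{Mod}(\mathcal{C}/\mathcal{I})}\big(F,\mathbb{EXT}^{q}_{\mathcal{C}}(\mathcal{C}/\mathcal{I},\pi_{\ast}F')\big)\ \Longrightarrow\ \mathrm{Ext}^{p+q}_{\mathrm{Mod}(\mathcal{C})}(\pi_{\ast}F,\pi_{\ast}F'),$$
whose bottom edge morphism is $\varphi^{p}_{F,\pi_{\ast}F'}$ (using $\mathbb{EXT}^{0}=\mathrm{id}$ on $\pi_{\ast}F'$); under $(b)$ the rows $q\ge1$ vanish, the sequence collapses, and $\varphi^{p}$ is an isomorphism. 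Finally $(b)\Rightarrow(c)$ is trivial, while $(c)\Rightarrow(b)$ follows by embedding $F'$ into an injective $J$, applying the exact functor $\pi_{\ast}$ to $0\to F'\to J\to F''\to0$, and dimension-shifting in the long exact sequence of the $\delta$-functor $\mathbb{EXT}^{\ast}_{\mathcal{C}}(\mathcal{C}/\mathcal{I},-)$, using that its degree-zero term is $\mathrm{id}$ on the image of $\pi_{\ast}$, so the degree-$0$ map is the surjection $J\to F''$.

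\emph{The block $(d)\Leftrightarrow(e)\Leftrightarrow(f)$, the bridge, and $(g)$.} The mirror scheme proves $(d)\Leftrightarrow(e)\Leftrightarrow(f)$, using Remark \ref{descEXT}(b), the right-exact left adjoint $\frac{\mathcal{C}}{\mathcal{I}}\otimes_{\mathcal{C}}-$ (which preserves projectives), the homological Grothendieck spectral sequence attached to $F\circ\pi_{2}\otimes_{\mathcal{C}}-\cong F\otimes_{\mathcal{C}/\mathcal{I}}\big(\frac{\mathcal{C}}{\mathcal{I}}\otimes_{\mathcal{C}}-\big)$ with edge morphism $\psi^{i}$, and the vanishing of $\mathrm{Tor}$ out of projectives. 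To link the two blocks I use the $K$-duality $D:=\mathrm{Hom}_{K}(-,K)$ together with the standard injective cogenerators $D\big((\mathcal{C}/\mathcal{I})(-,D)\big)$ of $\mathrm{Mod}(\mathcal{C}/\mathcal{I})$: evaluating the functors of Remark \ref{descEXT} at representables and applying $\mathrm{Ext}^{i}_{\mathcal{C}}(-,DL)\cong D\,\mathrm{Tor}^{\mathcal{C}}_{i}(L,-)$ identifies $\mathbb{EXT}^{i}_{\mathcal{C}}(\mathcal{C}/\mathcal{I},\pi_{\ast}J)(C)$ with $D$ of $\mathbb{TOR}^{\mathcal{C}}_{i}(\mathcal{C}/\mathcal{I},\pi_{\ast}P)(D)$ for $J=D((\mathcal{C}/\mathcal{I})(-,D))$ and $P=(\mathcal{C}/\mathcal{I})(C,-)$; since $(f)$ can be tested on the representable projectives and $(c)$ on the standard injective cogenerators (as $\mathbb{TOR}$ commutes with coproducts and $\mathbb{EXT}$ with products), this yields $(c)\Leftrightarrow(f)$ — both amount to the vanishing for $i\ge1$ of the "bimodule" groups $\mathrm{Tor}^{\mathcal{C}}_{i}(\mathcal{C}/\mathcal{I},\mathcal{C}/\mathcal{I})$ — and closes the equivalences $(a)$–$(f)$. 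For $(a)\Leftrightarrow(g)$: exactness of $\pi_{\ast}$ makes it a triangle functor on $\mathbf{D}^{b}$ by termwise application; restricting to stalk complexes and using $\mathrm{Hom}_{\mathbf{D}^{b}}(M,N[i])=\mathrm{Ext}^{i}(M,N)$ gives $(g)\Rightarrow(a)$, while $(a)\Rightarrow(g)$ is the standard induction on the total number of nonzero cohomologies of a pair of objects of $\mathbf{D}^{b}(\mathrm{Mod}(\mathcal{C}/\mathcal{I}))$, reduced via the truncation triangles and the five lemma to the stalk case, i.e. to $(a)$.

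\emph{Main obstacle.} I expect the genuine difficulty to lie in the bridge between the Ext-block and the Tor-block: pinning down the injective cogenerators of $\mathrm{Mod}(\mathcal{C}/\mathcal{I})$ correctly and making the $K$-dual $\mathrm{Tor}$–$\mathrm{Ext}$ comparison natural in both $C$ and $D$ without imposing any Hom-finiteness on $\mathcal{C}$, together with the less glamorous but indispensable verification that the edge homomorphisms of the two Grothendieck spectral sequences (and the comparison maps produced by the derived adjunctions and by the functor on $\mathbf{D}^{b}$) coincide on the nose with the morphisms $\varphi^{i}$ and $\psi^{i}$ of \cite{RSS}, and are not merely abstractly isomorphic to them.
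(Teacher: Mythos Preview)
Your proposal is correct. The paper itself gives no argument: its entire proof reads ``The proof given in \cite[Corollary 5.10]{RSS} can be adapted to this setting.'' What you have written is precisely the kind of adaptation the authors are pointing to --- the Grothendieck spectral sequences attached to the two adjunctions of $\pi_{\ast}$, the d\'evissage/dimension-shifting for $(c)\Rightarrow(b)$ and $(f)\Rightarrow(e)$, the Ext--Tor duality $\mathrm{Ext}^{i}_{\mathcal{C}}(-,DL)\cong D\,\mathrm{Tor}^{\mathcal{C}}_{i}(L,-)$ to bridge the two blocks, and the truncation-triangle induction for $(a)\Leftrightarrow(g)$ --- so there is no meaningful methodological difference to report. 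Your identification of the genuine technical point (that the edge maps of the spectral sequences really are the comparison maps $\varphi^{i}$, $\psi^{i}$ of \cite{RSS}, and that the duality bridge works without Hom-finiteness because $\mathrm{Hom}_{\mathcal{C}}(M,DL)\cong D(L\otimes_{\mathcal{C}}M)$ holds unconditionally over a field) is accurate; both issues are handled by your sketch.
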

\begin{proof}
The proof given in \cite[Corollary 5.10]{RSS} can be adapted to this setting.
\end{proof}

The following is a generalization of \cite[Definition 4.5]{GeigleLen}.

\begin{definition}$\textnormal{\cite[Definition 3.5]{EdgarValente}}$
Let $\mathcal{I}$ be an ideal of $\mathcal{C}$. It is said that $\pi_{1}:\mathcal{C}\longrightarrow\mathcal{C}/\mathcal{I}$ is an $\textbf{homological epimorphism}$ if $\mathcal{I}$ is strongly idempotent.
\end{definition}

\begin{proposition}\label{proyepihomo}$\textnormal{\cite[Proposition 4.4]{EdgarValente}}$
Let $\mathcal{I}$ be an idempotent ideal of $\mathcal{C}$ such that $\mathcal{I}(C,-)$ is projective in $\mathrm{Mod}(\mathcal{C})$ for all $C\in \mathcal{C}$.  Then $\mathcal{I}$ is strongly idempotent.
\end{proposition}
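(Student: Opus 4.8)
The plan is to verify condition (b) of Proposition \ref{caractidem2}, namely that $\mathbb{EXT}^{i}_{\mathcal{C}}(\mathcal{C}/\mathcal{I},F'\circ\pi_{1})=0$ for all $1\leq i<\infty$ and all $F'\in\mathrm{Mod}(\mathcal{C}/\mathcal{I})$; by that proposition this is equivalent to $\mathcal{I}$ being strongly idempotent. By Remark \ref{descEXT}(a), evaluating at an object $C$ of $\mathcal{C}/\mathcal{I}$ gives
$$\mathbb{EXT}^{i}_{\mathcal{C}}(\mathcal{C}/\mathcal{I},F'\circ\pi_{1})(C)=\mathrm{Ext}^{i}_{\mathrm{Mod}(\mathcal{C})}\!\left(\frac{\mathcal{C}(C,-)}{\mathcal{I}(C,-)},\,F'\circ\pi_{1}\right),$$
so it is enough to show this $\mathrm{Ext}$-group vanishes for every $C\in\mathcal{C}$ and every $i\geq 1$.

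The starting point is the short exact sequence of left $\mathcal{C}$-modules
$$0\longrightarrow \mathcal{I}(C,-)\longrightarrow \mathcal{C}(C,-)\longrightarrow \frac{\mathcal{C}(C,-)}{\mathcal{I}(C,-)}\longrightarrow 0,$$
in which $\mathcal{C}(C,-)$ is projective by the Yoneda lemma and $\mathcal{I}(C,-)$ is projective by hypothesis. Thus this is a projective resolution of length at most $1$ of $\tfrac{\mathcal{C}(C,-)}{\mathcal{I}(C,-)}$, so $\mathrm{pd}_{\mathrm{Mod}(\mathcal{C})}\tfrac{\mathcal{C}(C,-)}{\mathcal{I}(C,-)}\leq 1$ and hence $\mathrm{Ext}^{i}_{\mathrm{Mod}(\mathcal{C})}\!\big(\tfrac{\mathcal{C}(C,-)}{\mathcal{I}(C,-)},-\big)=0$ for all $i\geq 2$. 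This settles every degree except $i=1$.

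For $i=1$ I would apply $\mathrm{Hom}_{\mathrm{Mod}(\mathcal{C})}(-,F'\circ\pi_{1})$ to the sequence above; since $\mathcal{C}(C,-)$ is projective, the connecting homomorphism identifies $\mathrm{Ext}^{1}_{\mathrm{Mod}(\mathcal{C})}\!\big(\tfrac{\mathcal{C}(C,-)}{\mathcal{I}(C,-)},F'\circ\pi_{1}\big)$ with the cokernel of the restriction map $\mathrm{Hom}_{\mathrm{Mod}(\mathcal{C})}(\mathcal{C}(C,-),F'\circ\pi_{1})\to\mathrm{Hom}_{\mathrm{Mod}(\mathcal{C})}(\mathcal{I}(C,-),F'\circ\pi_{1})$. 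Hence it suffices to prove $\mathrm{Hom}_{\mathrm{Mod}(\mathcal{C})}(\mathcal{I}(C,-),F'\circ\pi_{1})=0$. The key point is that $\mathcal{I}$ acts as zero on $F'\circ\pi_{1}$: for $h\in\mathcal{I}(Y,X)$ one has $(F'\circ\pi_{1})(h)=F'(\bar h)=F'(0)=0$. Given any $\phi:\mathcal{I}(C,-)\to F'\circ\pi_{1}$ and any $g\in\mathcal{I}(C,X)$, the idempotency $\mathcal{I}=\mathcal{I}^{2}$ lets me write $g=\sum_{j}h_{j}g_{j}$ as a finite sum with $g_{j}\in\mathcal{I}(C,Y_{j})$ and $h_{j}\in\mathcal{I}(Y_{j},X)$; naturality of $\phi$ then yields $\phi_{X}(g)=\sum_{j}(F'\circ\pi_{1})(h_{j})\big(\phi_{Y_{j}}(g_{j})\big)=0$, so $\phi=0$, as needed.

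The step that carries the actual content is this last one: idempotency of $\mathcal{I}$ is used essentially to annihilate an arbitrary morphism out of $\mathcal{I}(C,-)$, which gives the vanishing of $\mathrm{Ext}^{1}$. The projectivity hypothesis on $\mathcal{I}(C,-)$ plays the complementary role of bounding the projective dimension of $\tfrac{\mathcal{C}(C,-)}{\mathcal{I}(C,-)}$ by $1$, thereby promoting the $\mathrm{Ext}^{1}$-vanishing to vanishing in all positive degrees; the remaining verifications (exactness of the displayed sequence, the identification via Yoneda, the standard long exact sequence) are routine.
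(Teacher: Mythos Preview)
Your argument is correct. The paper itself does not supply a proof of this proposition; it merely cites \cite[Proposition 4.4]{EdgarValente}. Your route via condition (b) of Proposition~\ref{caractidem2} and Remark~\ref{descEXT}(a) is the natural one: the projectivity hypothesis gives $\mathrm{pd}_{\mathrm{Mod}(\mathcal{C})}\big(\mathcal{C}(C,-)/\mathcal{I}(C,-)\big)\leq 1$, and then idempotency of $\mathcal{I}$ is used exactly where it should be, to kill $\mathrm{Hom}_{\mathrm{Mod}(\mathcal{C})}(\mathcal{I}(C,-),F'\circ\pi_{1})$ via the factorization $g=\sum_{j}h_{j}g_{j}$ and naturality. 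The only cosmetic remark is that you do not even need to identify $\mathrm{Ext}^{1}$ with a cokernel: since $\mathcal{I}(C,-)$ is projective, the long exact sequence already gives $\mathrm{Ext}^{1}\big(\mathcal{C}(C,-)/\mathcal{I}(C,-),F'\circ\pi_{1}\big)\cong \mathrm{Hom}(\mathcal{I}(C,-),F'\circ\pi_{1})/\mathrm{im}$, and the vanishing of the whole $\mathrm{Hom}$-group settles it immediately. This is precisely the classical Auslander--Platzeck--Todorov argument transported to the functor-category setting, and is almost certainly what the cited reference does as well.
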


The following definition can be found on page 56 in \cite{Mitchelring}.
\begin{definition}
Let $\mathcal{C}$ be a $K$-category.  The $\textbf{enveloping category}$ of $\mathcal{C}$, denoted by $\mathcal{C}^{e}$, is defined as $\mathcal{C}^{e}:=\mathcal{C}^{op}\otimes_{K}\mathcal{C}.$
\end{definition}

Consider an ideal $\mathcal{I}$ of $\mathcal{C}$ and  $\pi:\mathcal{C}\longrightarrow \mathcal{C}/\mathcal{I}=\mathcal{B}$ the canonical epimorphism. Consider
$H:=\mathcal{B}(-,-)\circ (\pi^{op}\otimes \pi)$. Thus, we obtain a morphism in $\mathrm{Mod}(\mathcal{C}^{e})$:
$$\Gamma(\pi):\mathcal{C}(-,-)\longrightarrow \mathcal{B}(-,-)\circ (\pi^{op}\otimes \pi)$$
such that for an object $(C,C')\in \mathcal{C}^{e}$, we have that $[\Gamma(\pi)]_{(C,C')}:\mathcal{C}(C,C')\longrightarrow \mathcal{B}(\pi(C),\pi(C'))$ is defined as $[\Gamma(\pi)]_{(C,C')}(f):=\pi(f)$ for all $f\in \mathcal{C}(C,C')$. Thus, we obtain the following exact sequence in $\mathrm{Mod}(\mathcal{C}^{e})$:

\begin{equation}\label{succanonica}
\xymatrix{0\ar[r] & \mathcal{I}\ar[r] & \mathcal{C}\ar[r]^{\Gamma(\pi)} & H \ar[r]  & 0.}
\end{equation}

\section{Singularity category}

The singularity category $\mathbf{D}_{sg}(A)$ of an algebra A over a field $K$, introduced by
R.O. Buchweitz in \cite{Buchweitz}, is defined as the Verdier quotient 
$$\mathbf{D}_{sg}(A)=\mathbf{D}^{b}(\mathrm{mod}(A))/\mathrm{perf}(A)$$ of the bounded derived category $\mathbf{D}^{b}(\mathrm{mod}(A))$ by the category of perfect complexes.
In recent years, D. Orlov (\cite{Orlov1} ) rediscovered the notion of singularity categories in his
study of B-branes on Landau-Ginzburg models in the framework of the Homological
Mirror Symmetry Conjecture. The singularity category measures the homological
singularity of an algebra in the sense that an algebra A has finite global dimension
if and only if its singularity category $\mathbf{D}_{sg}(A)$ vanishes.\\
Let $\mathcal{A}$  be an abelian category with enough projective objects. We denote by $\mathrm{Perf}(\mathcal{A})$ the full subcategory of $\mathbf{D}^{b}(\mathcal{A})$ consisting of complexes isomorphic  in $\mathbf{D}^{b}(\mathcal{A})$  to a bounded complex $P^{\bullet}$ of projective objects of $\mathcal{A}$. It is easy to see that  $\mathrm{Perf}(\mathcal{A})$ is a thick triangulated subcategory of $\mathbf{D}^{b}(\mathcal{A})$.

\begin{definition}$\textnormal{\cite[Definition  in p. 3768]{Zhao}}$
Let $\mathcal{A}$  be an abelian category with enough projective objects.
The singularity category of $\mathcal{A}$ is defined to be the
following Verdier quotient triangulated category
$$\mathbf{D}_{sg}(\mathcal{A})=\mathbf{D}^{b}(\mathcal{A})/\mathrm{Perf}(\mathcal{A}).$$
\end{definition}
For the construction of the Verdier's quotient see for example \cite{Thorsten} or \cite{Neeman}.


\begin{remark} Let $\Lambda$ be a ring. It is importan to consider $\mathbf{D}_{sg}(\mathcal{A})$ where $\mathcal{A}=\mathrm{Mod}(\Lambda)$ instead of just $\mathcal{A}=\mathrm{mod}(\Lambda)$  (the category of finitely generated $\Lambda$-modules). Because $\mathbf{D}_{sg}(\mathrm{Mod}(\Lambda))$  is the category that measures de singularity of $\Lambda$ in the sense that $\mathbf{D}_{sg}(\mathrm{Mod}(\Lambda))=0$ if and only if $\mathrm{gl.dim}(\Lambda)<\infty$, for any ring $\Lambda$ (see Remark 6.9 in 
\cite{Panagiotis}.) 
\end{remark}

\section{Main Theorem}

Let $I$ be an ideal of a $K$-category $\mathcal{C}$ and consider the functor $\pi:\mathcal{C}\longrightarrow \mathcal{C}/I$. Recall that  $\mathcal{C}/I\in \mathrm{Mod}((\mathcal{C}/I)^{e})$, that is, $(\mathcal{C}/I)(-,-):(\mathcal{C}/I)^{op}\otimes_K \mathcal{C}/I\longrightarrow \mathrm{Mod}(K)$. We also get the  induced functor
$$\pi_{\ast}:\mathrm{Mod}(\mathcal{C}/I)\longrightarrow \mathrm{Mod}(\mathcal{C}).$$

\begin{definition}\label{unbifuntor}
Let $\mathcal{C}$ and $\mathcal{D}$ be $K$-categories. There is a  bifunctor 
$$F:=-\boxtimes_{\mathcal{C}}-: \mathrm{Mod}(\mathcal{C}^{op}\otimes_K \mathcal{D})\times \mathrm{Mod}(\mathcal{C})\longrightarrow \mathrm{Mod}(\mathcal{D})$$
where for $B\in \mathrm{Mod}(\mathcal{C}^{op}\otimes_K \mathcal{D})$, $X\in \mathrm{Mod}(\mathcal{C})$ and $D\in \mathcal{D}$ we set
$$(B\boxtimes_{\mathcal{C}}X)(D):=B(-,D)\otimes_{\mathcal{C}}X.$$
\end{definition}

We have the following proposition.

\begin{proposition}\label{traingleconmuta}
Consider  $H:=(\mathcal{C}/I)\circ (\pi^{op}\otimes \pi)\in \mathrm{Mod}(\mathcal{C}^{e})$  and $H_{1}:=(\mathcal{C}/I)\circ (\pi^{op}\otimes 1)\in \mathrm{Mod}(\mathcal{C}^{op}\otimes_K (\mathcal{C}/I))$. Then the following diagram commutes
$$\xymatrix{\mathrm{Mod}(\mathcal{C})\ar[rr]^{H\boxtimes_{\mathcal{C}}-}\ar[dr]_{H_{1}\boxtimes_{\mathcal{C}}-} & & \mathrm{Mod}(\mathcal{C})\\
 & \mathrm{Mod}(\mathcal{C}/I)\ar[ur]_{\pi_{\ast}}}$$
\end{proposition}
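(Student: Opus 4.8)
The plan is to verify the claimed commutativity by evaluating both composite functors on an arbitrary module $X \in \mathrm{Mod}(\mathcal{C})$ and at an arbitrary object $C' \in \mathcal{C}$, and showing the resulting $K$-vector spaces are naturally isomorphic. First I would unwind the definitions. By Definition \ref{unbifuntor}, for $X \in \mathrm{Mod}(\mathcal{C})$ and $C' \in \mathcal{C}$ we have $(H \boxtimes_{\mathcal{C}} X)(C') = H(-,C') \otimes_{\mathcal{C}} X$, where $H(-,C') = (\mathcal{C}/I)(\pi(-),\pi(C'))$ is the right $\mathcal{C}$-module obtained by restricting along $\pi^{op}$. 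On the other hand, $(H_1 \boxtimes_{\mathcal{C}} X)$ is the $\mathcal{C}/I$-module whose value at $\bar{C'} \in \mathcal{C}/I$ is $H_1(-,\bar{C'}) \otimes_{\mathcal{C}} X = (\mathcal{C}/I)(\pi(-), \bar{C'}) \otimes_{\mathcal{C}} X$; applying $\pi_\ast$ (restriction along $\pi$) gives the $\mathcal{C}$-module $C' \mapsto (\mathcal{C}/I)(\pi(-), \pi(C')) \otimes_{\mathcal{C}} X$. So on objects the two sides literally agree, since $H(-,C') = H_1(-,\pi(C'))$ as right $\mathcal{C}$-modules.

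Next I would check that this identification is natural in $C'$ and respects the $\mathcal{C}$-module structure, i.e. that it is an isomorphism inside $\mathrm{Mod}(\mathcal{C})$ rather than merely a pointwise equality of vector spaces. For a morphism $f \in \mathcal{C}(C',C'')$, the map induced on $(H \boxtimes_{\mathcal{C}} X)(C') \to (H \boxtimes_{\mathcal{C}} X)(C'')$ is $(H(-,f) \otimes_{\mathcal{C}} \mathrm{id}_X)$, while on the other side the structure map is induced by $H_1(-,\pi(f)) = H_1(-,\bar{f})$, tensored with $\mathrm{id}_X$, and then the $\pi_\ast$ has no further effect on morphisms of $\mathcal{C}$ beyond restriction. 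Since $H(-,f) = (\mathcal{C}/I)(\pi(-),\pi(f)) = H_1(-,\bar f)$ under the equality $\pi(f) = \bar f$, these coincide. Then I would verify the analogous compatibility in the $X$-variable so as to conclude equality (or canonical natural isomorphism) of the two functors $\mathrm{Mod}(\mathcal{C}) \to \mathrm{Mod}(\mathcal{C})$, not just agreement on a single object. This amounts to observing that both bifunctors $-\boxtimes_{\mathcal{C}}-$ in question are built from Mitchell's tensor product \eqref{tensorMitchel}, which is functorial in both arguments, and that restriction of scalars $\pi_\ast$ commutes with these tensor constructions because $\pi$ is the identity on objects; this is essentially the associativity/compatibility of the bifunctor $\boxtimes$ with the composition $\pi^{op}\otimes\pi = (\pi^{op}\otimes 1)\circ(1\otimes\pi)$ at the level of the first tensor factor, combined with the defining isomorphism $F \otimes_{\mathcal{C}} \mathcal{C}(X,-) \simeq F(X)$.

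The only genuinely delicate point — and the step I expect to be the main obstacle — is bookkeeping the identification $H = H_1 \circ (1 \otimes \pi)$, i.e. that $H(C,C') = (\mathcal{C}/I)(\pi(C),\pi(C'))$ can be read either as restricting $H_1 \in \mathrm{Mod}(\mathcal{C}^{op}\otimes_K(\mathcal{C}/I))$ along $1_{\mathcal{C}^{op}} \otimes \pi$ in the second variable, or as the full double restriction of $\mathcal{C}/I \in \mathrm{Mod}((\mathcal{C}/I)^e)$. Once that factorization is recorded cleanly, the commutativity of the triangle reduces to the statement that for $B \in \mathrm{Mod}(\mathcal{C}^{op}\otimes_K(\mathcal{C}/I))$ one has a natural isomorphism $(B\circ(1\otimes\pi)) \boxtimes_{\mathcal{C}} X \;\simeq\; \pi_\ast\big(B \boxtimes_{\mathcal{C}} X\big)$ in $\mathrm{Mod}(\mathcal{C})$, which holds because evaluating the left side at $C'$ gives $B(\pi(-),\pi(C'))\otimes_{\mathcal{C}} X$ and evaluating the right side at $C'$ gives the value of $B\boxtimes_{\mathcal{C}}X$ at $\pi(C')$, namely $B(-,\pi(C'))\otimes_{\mathcal{C}} X$ viewed as a $\mathcal{C}$-module via $\pi$ — and these are the same vector space by definition. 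Specializing $B = H_1$ finishes the proof. I would present this as a short direct verification rather than invoking any heavier machinery.
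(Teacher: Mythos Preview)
Your proposal is correct and is precisely the direct unwinding of definitions that the paper has in mind; the paper's own proof reads simply ``It is straightforward.'' Your identification $H = H_{1}\circ(1\otimes\pi)$ and the pointwise check that $(B\circ(1\otimes\pi))\boxtimes_{\mathcal{C}}X \simeq \pi_{\ast}(B\boxtimes_{\mathcal{C}}X)$ is exactly the content being asserted.
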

\begin{proof}
It is straightforward.
\end{proof}

\begin{remark}\label{2bifuntores}
We note that $H_{1}\boxtimes_{\mathcal{C}}-$ is the same as the funtor 
$$\mathcal{C}/I\otimes_{\mathcal{C}}-: \mathrm{Mod}(\mathcal{C})\longrightarrow \mathrm{Mod}(\mathcal{C}/I)$$ defined in p. 793 in \cite{LGOS2}. The functor $ \frac{\mathcal{C}}{I}\otimes_{\mathcal{C}}:\mathrm{Mod}(\mathcal{C})\longrightarrow\mathrm{Mod}(\mathcal{C}/I) $ is defined as follows: 
$\left( \frac{\mathcal{C}}{I}\otimes _{\mathcal{C}}M\right)(C):= \frac{\mathcal{C}(-,C)}{I(-,C)}\otimes_{\mathcal{C}}M$  for all $M\in \mathrm{Mod}(\mathcal{C})$
and  $\left( \frac{\mathcal{C}}{I}\otimes _{\mathcal{C}}M\right)(\overline{f}):=\frac{\mathcal{C}}{I}(-,f)\otimes_{\mathcal{C}}M$ for all $\overline{f}=f+I(C,C')\in \frac{\mathcal{C}(C,C')}{I(C,C')}$.
\end{remark}

Consider the bifuntor given in Definition \ref{unbifuntor}:
$$F:=-\boxtimes_{\mathcal{C}}-:\mathrm{Mod}(\mathcal{C}^{op}\otimes_K \mathcal{D})\times \mathrm{Mod}(\mathcal{C})\longrightarrow \mathrm{Mod}(\mathcal{D}).$$

Now, by following the construction in p. 57 in \cite{Kashiwara} but for the case of right exact bifuntors, we have the induced bifunctor
$$\mathbb{F}:=K^{-}F:\mathbf{K}^{-}\Big(\mathrm{Mod}(\mathcal{C}^{op}\otimes_K \mathcal{D})\Big)\times \mathbf{K}^{-}\Big(\mathrm{Mod}(\mathcal{C})\Big)\longrightarrow \mathbf{K}^{-}(\mathrm{Mod}(\mathcal{D})).$$

\begin{proposition}\label{funtorderivadotensor}
Consider the bifuntor given in Definition \ref{unbifuntor}:
$$F:=-\boxtimes_{\mathcal{C}}-:\mathrm{Mod}(\mathcal{C}^{op}\otimes_K \mathcal{D})\times \mathrm{Mod}(\mathcal{C})\longrightarrow \mathrm{Mod}(\mathcal{D}).$$
Then, there exists the left derived bifunctor of $\mathbb{F}$:
$$L^{-}\mathbb{F}=-\boxtimes_{\mathcal{C}}^{L}-:\mathbf{D}^{-}\Big(\mathrm{Mod}(\mathcal{C}^{op}\otimes_K \mathcal{D})\Big)\times \mathbf{D}^{-}\Big(\mathrm{Mod}(\mathcal{C})\Big)\longrightarrow \mathbf{D}^{-}\Big(\mathrm{Mod}(\mathcal{D})\Big).$$
Moreover,  the following statements hold.
\begin{enumerate}
\item [(a)]
For $X^{\bullet}\in \mathbf{K}^{-}\Big(\mathrm{Mod}(\mathcal{C}^{op}\otimes_K \mathcal{D})\Big)$ the functor
$$\mathbb{F}(X^{\bullet},-): \mathbf{K}^{-}\Big(\mathrm{Mod}(\mathcal{C})\Big)\longrightarrow \mathbf{K}^{-}(\mathrm{Mod}(\mathcal{D}))$$ has a left derived functor
$$L_{II}^{-}\mathbb{F}(X^{\bullet},-): \mathbf{D}^{-}\Big(\mathrm{Mod}(\mathcal{C})\Big)\longrightarrow \mathbf{D}^{-}(\mathrm{Mod}(\mathcal{D})).$$

\item [(b)] For $Y^{\bullet}\in \mathbf{K}^{-}\Big(\mathrm{Mod}(\mathcal{C})\Big)$    the functor
$$\mathbb{F}(-,Y^{\bullet}): \mathbf{K}^{-}\Big(\mathrm{Mod}(\mathcal{C}^{op}\otimes_K \mathcal{D})\Big)\longrightarrow \mathbf{K}^{-}(\mathrm{Mod}(\mathcal{D}))$$ has a left derived functor
$$L_{I}^{-}\mathbb{F}(-,Y^{\bullet}): \mathbf{D}^{-}\Big(\mathrm{Mod}(\mathcal{C}^{op}\otimes_K \mathcal{D})\Big)\longrightarrow \mathbf{D}^{-}(\mathrm{Mod}(\mathcal{D})).$$

\item [(c)] For  $X^{\bullet}\in \mathbf{D}^{-}\Big(\mathrm{Mod}(\mathcal{C}^{op}\otimes_K \mathcal{D})\Big)$ and   $Y^{\bullet}\in \mathbf{D}^{-}\Big(\mathrm{Mod}(\mathcal{C})\Big)$, there exist isomorphisms:
$$L^{-}\mathbb{F}(X^{\bullet},Y^{\bullet})\simeq L_{II}^{-}\mathbb{F}(X^{\bullet},Y^{\bullet})\simeq L_{I}^{-}\mathbb{F} (X^{\bullet},Y^{\bullet}).$$
\end{enumerate}

\end{proposition}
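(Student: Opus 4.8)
The plan is to follow the standard machinery for derived bifunctors of right-exact bifunctors, as developed in \cite{Kashiwara} (p.~57), adapted to the present tensor-type bifunctor. The key technical input is the existence of a sufficiently large class of objects that are adapted (acyclic) for each of the two partial functors, together with the observation that projective objects in $\mathrm{Mod}(\mathcal{C}^{op}\otimes_K \mathcal{D})$ and in $\mathrm{Mod}(\mathcal{C})$ behave well under $-\boxtimes_{\mathcal{C}}-$. First I would recall that $\mathrm{Mod}(\mathcal{C})$, $\mathrm{Mod}(\mathcal{D})$ and $\mathrm{Mod}(\mathcal{C}^{op}\otimes_K \mathcal{D})$ are abelian categories with enough projectives (cited in the Preliminaries), and that the representable functors $\mathcal{C}(C,-)$ (resp.\ the projectives in $\mathrm{Mod}(\mathcal{C}^{op}\otimes_K\mathcal{D})$, which are summands of coproducts of functors of the form $\mathcal{C}(-,C)\otimes_K \mathcal{D}(D,-)$) are projective generators. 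The crucial computational fact is the adjunction/evaluation isomorphism $F\otimes_{\mathcal{C}}\mathcal{C}(X,-)\simeq F(X)$ from \eqref{tensorMitchel}, which yields that $B\boxtimes_{\mathcal{C}}\mathcal{C}(C,-)$ is a well-understood object of $\mathrm{Mod}(\mathcal{D})$ and, dually, that $\big(\mathcal{C}(-,C)\otimes_K\mathcal{D}(D,-)\big)\boxtimes_{\mathcal{C}}X \simeq X(C)\otimes_K\mathcal{D}(D,-)$, which is projective in $\mathrm{Mod}(\mathcal{D})$ whenever $X(C)$ is a projective (free) $K$-module --- and over a field every $K$-module is free.

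The steps, in order, are as follows. Step 1: Show that the class $\mathcal{P}_1$ of complexes in $\mathbf{K}^{-}(\mathrm{Mod}(\mathcal{C}^{op}\otimes_K\mathcal{D}))$ with projective components is $\mathbb{F}(-,Y^{\bullet})$-adapted for each fixed $Y^{\bullet}\in\mathbf{K}^{-}(\mathrm{Mod}(\mathcal{C}))$: it is triangulated, closed under the relevant operations, every object of $\mathbf{K}^{-}$ admits a quasi-isomorphism from an object of $\mathcal{P}_1$ (existence of projective resolutions, bounded above), and $\mathbb{F}(P^{\bullet},-)$ sends acyclic complexes in $\mathcal{P}_1$ to acyclic complexes --- here one uses that a bounded-above acyclic complex of projectives is contractible, so $-\boxtimes_{\mathcal{C}}-$ applied componentwise preserves contractibility. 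This gives the functor $L_I^{-}\mathbb{F}(-,Y^{\bullet})$ of part (b). Step 2: Symmetrically, establish that the class $\mathcal{P}_2$ of complexes in $\mathbf{K}^{-}(\mathrm{Mod}(\mathcal{C}))$ with projective components is $\mathbb{F}(X^{\bullet},-)$-adapted for each fixed $X^{\bullet}$; the same contractibility argument, combined with the isomorphism $\big(\mathcal{C}(-,C)\otimes_K\mathcal{D}(D,-)\big)\boxtimes_{\mathcal{C}}P \simeq P(C)\otimes_K\mathcal{D}(D,-)$ noted above, shows $\mathbb{F}(X^{\bullet},-)$ preserves acyclicity on $\mathcal{P}_2$. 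This gives $L_{II}^{-}\mathbb{F}(X^{\bullet},-)$ of part (a). Step 3: For the doubly derived functor $L^{-}\mathbb{F}$ of the full bifunctor, invoke the criterion of \cite{Kashiwara} (or argue directly): since $\mathcal{P}_1\times\mathbf{K}^{-}(\mathrm{Mod}(\mathcal{C}))$ and $\mathbf{K}^{-}(\mathrm{Mod}(\mathcal{C}^{op}\otimes_K\mathcal{D}))\times\mathcal{P}_2$ are both systems of adapted pairs and they are ``compatible'' --- i.e.\ the double complex obtained by resolving in both variables computes the same thing, because $\mathbb{F}$ restricted to $\mathcal{P}_1\times\mathcal{P}_2$ (projectives in both slots) maps quasi-isomorphisms to quasi-isomorphisms in each variable separately --- one concludes that $L^{-}\mathbb{F}$ exists and that all three derived functors agree on $\mathbf{D}^{-}\times\mathbf{D}^{-}$, which is part (c). Concretely, for $X^{\bullet}$ with projective components one has $L^{-}\mathbb{F}(X^{\bullet},Y^{\bullet})\simeq \mathbb{F}(X^{\bullet},Q^{\bullet})$ for a projective resolution $Q^{\bullet}\to Y^{\bullet}$, which is also $L_{II}^{-}\mathbb{F}(X^{\bullet},Y^{\bullet})$; and symmetrically for the other identification, using that a complex of projectives in both slots is already adapted so no further resolution changes the homotopy type.

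The main obstacle I expect is Step 3, the coherence between the two one-sided derived functors: one must verify that resolving $X^{\bullet}$ by projectives and $Y^{\bullet}$ by projectives and then forming the total complex of the resulting double complex is independent of the order, and that this total complex is quasi-isomorphic to each of the partial resolutions applied to the unresolved object in the other slot. The cleanest route is to check the hypotheses of the general bifunctor lemma in \cite{Kashiwara}: namely that $F$ is right exact in each variable (immediate from right-exactness of $-\otimes_{\mathcal{C}}-$), that there are enough $F(-,Q)$-acyclics and $F(P,-)$-acyclics of the required form, and the key ``Cartan--Eilenberg'' style vanishing: $\mathrm{Tor}$-type higher terms of $P\boxtimes_{\mathcal{C}}-$ vanish on projectives and conversely. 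Over a field $K$ this vanishing is automatic because flatness over $K$ is vacuous --- every bifunctor entry that appears is $K$-split --- so the spectral sequence of the double complex degenerates appropriately. Once this compatibility is in place, the three isomorphisms in (c) follow formally, and (a), (b) are the degenerate one-variable cases already handled in Steps 1--2. I would present the argument by explicitly citing the bifunctor construction of \cite{Kashiwara} and indicating that the only nontrivial verification --- acyclicity preservation --- reduces to contractibility of bounded-above acyclic complexes of projectives under the componentwise-exact functor $-\boxtimes_{\mathcal{C}}-$ on such complexes, together with the evaluation isomorphism of \eqref{tensorMitchel}.
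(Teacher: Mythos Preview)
Your proposal is correct and follows essentially the same approach as the paper: both take the pair $(\mathrm{Proj}(\mathrm{Mod}(\mathcal{C}^{op}\otimes_K\mathcal{D})),\,\mathrm{Proj}(\mathrm{Mod}(\mathcal{C})))$ as an $\mathbb{F}$-projective pair in the sense of the dual of \cite[Definition~1.10.6]{Kashiwara} and then invoke the general bifunctor machinery (duals of \cite[Propositions~1.10.4 and~1.10.7, Corollary~1.10.5, Remark~1.10.10]{Kashiwara}) to obtain $L^{-}\mathbb{F}$, the partial derived functors, and their agreement. The only difference is one of detail: the paper simply asserts that the pair is $\mathbb{F}$-projective and cites Kashiwara, while you spell out the underlying verifications (contractibility of bounded-above acyclic complexes of projectives, the evaluation isomorphism \eqref{tensorMitchel}, and the field hypothesis on $K$), but the strategy is identical.
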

\begin{proof}
Let $\mathcal{P}=\mathrm{Proj}(\mathrm{Mod}(\mathcal{C}^{op}\otimes_K \mathcal{D}))$ and $\mathcal{P}'=\mathrm{Proj}(\mathrm{Mod}(\mathcal{C}))$  the category of projective modules in $\mathrm{Mod}(\mathcal{C}^{op}\otimes_K \mathcal{D})$ and $\mathrm{Mod}(\mathcal{C})$ respectively. Now, we have that the pair $(\mathcal{P},\mathcal{P}')$ is $\mathbb{F}$-projective (in the sense of the dual of Definition \cite[Definition 1.10.6]{Kashiwara} or \cite[Definition 13.4.2]{Kashiwara2}).
By dual of  \cite[Proposition 1.10.7]{Kashiwara}, we have that $\Big(\mathbf{K}^{-}\big(\mathcal{P}\big),\mathbf{K}^{-}\big(\mathcal{P}'\big)\Big)$ satisfies the duals of conditions 1.10.1 and 1.10.2 in p. 57 of \cite{Kashiwara}. Hence by dual of \cite[Proposition 1.10.4]{Kashiwara}, we have that there exists the left derived functor $L\mathbb{F}$.\\
We also have that  the subcategories $\mathbf{K}^{-}\big(\mathcal{P}\big)$ and $\mathbf{K}^{-}\big(\mathcal{P}'\big)$  satisfies the dual of the conditions 1.10.3 and 1.10.4 in p. 58 of \cite{Kashiwara}, for the functors $\mathbb{F}(-,Y^{\bullet})$  and  $\mathbb{F}(X^{\bullet},-)$ respectively, for every $X^{\bullet}\in \mathbf{K}^{-}\Big(\mathrm{Mod}(\mathcal{C}^{op}\otimes_K \mathcal{D})\Big)$ and $Y^{\bullet}\in \mathbf{K}^{-}\Big(\mathrm{Mod}(\mathcal{C})\Big)$. Therefore, by the duals of \cite[Corollary 1.10.5]{Kashiwara} and   \cite[Remark 1.10.10]{Kashiwara}, we have the result.\\
\end{proof}

\begin{corollary}\label{compodosfuntor}
Consider $H=(\mathcal{C}/I)\circ (\pi^{op}\otimes \pi)\in \mathbf{K}^{-}\Big(\mathrm{Mod}(\mathcal{C}^{op}\otimes_K \mathcal{C})\Big)$ and the left derived functor $$L_{II}^{-}\mathbb{F}(H,-): \mathbf{D}^{-}\Big(\mathrm{Mod}(\mathcal{C})\Big)\longrightarrow \mathbf{D}^{-}\Big(\mathrm{Mod}(\mathcal{C})\Big)$$
and $H_{1}=(\mathcal{C}/I)\circ (\pi^{op}\otimes 1)\in \mathbf{K}^{-}\Big(\mathrm{Mod}((\mathcal{C}/I)^{op}\otimes_K \mathcal{C})\Big)$ and the left derived functor $$L_{II}^{-}\mathbb{F}(H_{1},-): \mathbf{D}^{-}\Big(\mathrm{Mod}(\mathcal{C})\Big)\longrightarrow \mathbf{D}^{-}\Big(\mathrm{Mod}(\mathcal{C}/I)\Big).$$
Then $$L_{II}^{-}\mathbb{F}(H,-)=L(\pi_{\ast})\circ L_{II}^{-}\mathbb{F}(H_{1},-).$$
\end{corollary}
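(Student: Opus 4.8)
The plan is to reduce the identity $L_{II}^{-}\mathbb{F}(H,-)=L(\pi_{\ast})\circ L_{II}^{-}\mathbb{F}(H_{1},-)$ to the non-derived commutativity already recorded in Proposition \ref{traingleconmuta} by using that all three functors involved are computed via projective resolutions. Concretely, let $Y^{\bullet}\in\mathbf{D}^{-}(\mathrm{Mod}(\mathcal{C}))$ and choose a quasi-isomorphism $P^{\bullet}\to Y^{\bullet}$ with $P^{\bullet}\in\mathbf{K}^{-}(\mathrm{Proj}(\mathrm{Mod}(\mathcal{C})))$; by the proof of Proposition \ref{funtorderivadotensor} such bounded-above complexes of projectives form an $\mathbb{F}(H_{1},-)$-projective (resp.\ $\mathbb{F}(H,-)$-projective) subcategory, so
$$L_{II}^{-}\mathbb{F}(H_{1},Y^{\bullet})\simeq \mathbb{F}(H_{1},P^{\bullet})=H_{1}\boxtimes_{\mathcal{C}}P^{\bullet},\qquad L_{II}^{-}\mathbb{F}(H,Y^{\bullet})\simeq \mathbb{F}(H,P^{\bullet})=H\boxtimes_{\mathcal{C}}P^{\bullet},$$
the second isomorphism holding at the level of complexes since $\mathbb{F}=K^{-}F$ is defined termwise.

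Next I would deal with the factor $L(\pi_{\ast})$. Here the key point is that $\pi_{\ast}:\mathrm{Mod}(\mathcal{C}/I)\to\mathrm{Mod}(\mathcal{C})$ is exact, so it descends directly to the derived category without needing a resolution, i.e.\ $L(\pi_{\ast})=K^{-}(\pi_{\ast})$ on $\mathbf{D}^{-}$; consequently there is no discrepancy between $L(\pi_{\ast})$ applied to the genuine complex $H_{1}\boxtimes_{\mathcal{C}}P^{\bullet}$ and $\pi_{\ast}$ applied termwise. It then remains to observe that $\pi_{\ast}(H_{1}\boxtimes_{\mathcal{C}}P^{\bullet})\cong H\boxtimes_{\mathcal{C}}P^{\bullet}$ as complexes in $\mathbf{K}^{-}(\mathrm{Mod}(\mathcal{C}))$; but this is exactly Proposition \ref{traingleconmuta} applied in each degree $n$ to $P^{n}\in\mathrm{Mod}(\mathcal{C})$, and these isomorphisms are natural in the module argument, hence assemble into an isomorphism of complexes compatible with differentials. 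Chaining the displayed isomorphisms gives
$$L(\pi_{\ast})\big(L_{II}^{-}\mathbb{F}(H_{1},Y^{\bullet})\big)\simeq \pi_{\ast}(H_{1}\boxtimes_{\mathcal{C}}P^{\bullet})\simeq H\boxtimes_{\mathcal{C}}P^{\bullet}\simeq L_{II}^{-}\mathbb{F}(H,Y^{\bullet}),$$
which is the desired equality of functors once one checks the isomorphism is natural in $Y^{\bullet}$.

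The only genuinely delicate points are bookkeeping ones. First, one must make sure that the \emph{same} choice of $P^{\bullet}$ legitimately computes all three derived functors; this is guaranteed because $\mathrm{Proj}(\mathrm{Mod}(\mathcal{C}))$ is simultaneously $\mathbb{F}(H,-)$-projective and $\mathbb{F}(H_{1},-)$-projective (both follow from the proof of Proposition \ref{funtorderivadotensor}, since in either case one resolves the second variable), so the derived functors are independent of the admissible resolution used and one is free to use one fixed $P^{\bullet}$ throughout. Second, the equality in the statement should be read as an isomorphism of functors $\mathbf{D}^{-}(\mathrm{Mod}(\mathcal{C}))\to\mathbf{D}^{-}(\mathrm{Mod}(\mathcal{C}))$, so one should verify that the degreewise isomorphisms of Proposition \ref{traingleconmuta}, which are natural in the $\mathcal{C}$-module, are also compatible with morphisms of complexes and with the choice of resolutions — i.e.\ that the resulting isomorphism is a natural transformation of triangulated functors. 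I expect this naturality/compatibility verification to be the main (though still routine) obstacle; the core algebraic content is entirely contained in Proposition \ref{traingleconmuta} together with the exactness of $\pi_{\ast}$.
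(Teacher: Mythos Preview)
Your proposal is correct and follows essentially the same approach as the paper: both reduce to the non-derived commutativity of Proposition \ref{traingleconmuta} and then pass to derived functors. The only cosmetic difference is that the paper invokes a black-box result (the dual of the composition-of-derived-functors theorem, Theorem~1 on p.~200 of \cite{Gelfand}) to handle the passage $L(\pi_\ast\circ(H_1\boxtimes_{\mathcal{C}}-))\simeq L(\pi_\ast)\circ L(H_1\boxtimes_{\mathcal{C}}-)$, whereas you unwind this directly using that $\pi_\ast$ is exact (hence $L(\pi_\ast)=\pi_\ast$ termwise) and that a single projective resolution of $Y^\bullet$ computes both derived tensors; in the present situation, where the outer functor is exact, your direct argument is exactly the content of that theorem specialized to this easy case.
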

\begin{proof}
 It follows from Proposition \ref{traingleconmuta} and the dual of Theorem 1 in p. 200 in \cite{Gelfand}.
\end{proof}

\subsection{Restricting functors to the bounded derived category and main theorem}

We now give the following definition, see for example first paragraph in p. 85 in \cite{Borel}
\begin{definition}
Let $F:\mathcal{A} \longrightarrow \mathcal{B}$ be a functor between abelian categories. We say that $F$ has \textbf{finite left cohomological dimension} if there exists an integer $n\geq 0$ such that 
$$L_{i}F(A)=H^{-i}(L^{-}F(A))=0$$
for all $A\in \mathcal{A}$ and for all $i>n$ ( we consider $A$ as a complex concentrated in zero degree), where
$L^{-}F:\mathbf{D}^{-}(\mathcal{A})\longrightarrow  \mathbf{D}^{-}(\mathcal{B})$ is the left derived functor of $F$. Dually, we say that $F$ has \textbf{finite right cohomological dimension} if there exists an integer $n\geq 0$ such that 
$$R_{i}F(A)=H^{i}(R^{+}F(A))=0$$
for all $A\in \mathcal{A}$ and for all $i>n$, where
$R^{+}F:\mathbf{D}^{+}(\mathcal{A})\longrightarrow  \mathbf{D}^{+}(\mathcal{B})$ is the right derived functor of $F$.
\end{definition}

The importance of finite left (co)homological dimension is that it allow us to restrict derived functors to the bounded derived categories.

\begin{lemma}\label{lema1Cline-Parshall}
Let $\mathcal{A}$ and $\mathcal{B}$ be abelian categories such that $\mathcal{A}$ has enough injectives and $\mathcal{B}$ has enough projectives. Let $F:\mathcal{A}\longrightarrow \mathcal{B}$ and $G:\mathcal{B}\longrightarrow \mathcal{A}$
additive functors such that
\begin{enumerate}
\item [(a)] $F$ is right adjoint to $G$,

\item [(b)] $F$ has finite right cohomological dimension and $G$ has finite left cohomological dimension.
\end{enumerate}
Then $R^{+}F:\mathbf{D}^{b}(\mathcal{A})\longrightarrow \mathbf{D}^{b}(\mathcal{B})$ is right adjoint to  $L^{-}G: \mathbf{D}^{b}(\mathcal{B})\longrightarrow \mathbf{D}^{b}(\mathcal{A})$.
\end{lemma}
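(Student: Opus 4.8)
The plan is to upgrade the classical adjunction $(L^{-}G, R^{+}F)$ between the \emph{unbounded} (or one-sided bounded) derived categories to the bounded level, using the finiteness hypotheses to control that the derived functors actually preserve bounded complexes. First I would recall the general adjunction of derived functors: since $F$ is right adjoint to $G$ on the abelian level, and $\mathcal{A}$ has enough injectives while $\mathcal{B}$ has enough projectives, the total derived functors $R^{+}F\colon \mathbf{D}^{+}(\mathcal{A})\to\mathbf{D}^{+}(\mathcal{B})$ and $L^{-}G\colon\mathbf{D}^{-}(\mathcal{B})\to\mathbf{D}^{-}(\mathcal{A})$ exist and form an adjoint pair $(L^{-}G, R^{+}F)$ on the intersection of their domains — this is the dual of Theorem~1 in p.~200 of \cite{Gelfand}, the same tool already invoked for Corollary~\ref{compodosfuntor}. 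So the only real content is the claim that these functors restrict to the bounded derived categories.

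The key step is therefore the following two restriction claims, each an easy consequence of finite cohomological dimension together with a hypercohomology (spectral sequence) argument. Suppose $G$ has finite left cohomological dimension $n$. Given $Y^{\bullet}\in\mathbf{D}^{b}(\mathcal{B})$, choose a bounded-above complex $P^{\bullet}$ of projectives quasi-isomorphic to $Y^{\bullet}$; then $L^{-}G(Y^{\bullet})\simeq G(P^{\bullet})$, and the hypercohomology spectral sequence $E_{2}^{p,q}=L_{-p}G\bigl(H^{q}(Y^{\bullet})\bigr)\Rightarrow H^{p+q}(L^{-}G(Y^{\bullet}))$ has $E_{2}^{p,q}=0$ unless $-n\le p\le 0$ and $H^{q}(Y^{\bullet})\neq 0$; since only finitely many $q$ contribute, $L^{-}G(Y^{\bullet})$ has bounded cohomology, hence lies in $\mathbf{D}^{b}(\mathcal{A})$ (identified inside $\mathbf{D}(\mathcal{A})$ as a full subcategory, as recalled in the Preliminaries). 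Dually, if $F$ has finite right cohomological dimension, the analogous spectral sequence $E_{2}^{p,q}=R^{p}F\bigl(H^{q}(X^{\bullet})\bigr)\Rightarrow H^{p+q}(R^{+}F(X^{\bullet}))$ shows $R^{+}F$ carries $\mathbf{D}^{b}(\mathcal{A})$ into $\mathbf{D}^{b}(\mathcal{B})$. Thus both restricted functors $L^{-}G\colon\mathbf{D}^{b}(\mathcal{B})\to\mathbf{D}^{b}(\mathcal{A})$ and $R^{+}F\colon\mathbf{D}^{b}(\mathcal{A})\to\mathbf{D}^{b}(\mathcal{B})$ are well defined.

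Finally I would conclude by naturality of the adjunction. For $X^{\bullet}\in\mathbf{D}^{b}(\mathcal{A})$ and $Y^{\bullet}\in\mathbf{D}^{b}(\mathcal{B})$, the adjunction isomorphism
\[
\mathrm{Hom}_{\mathbf{D}(\mathcal{A})}\bigl(L^{-}G(Y^{\bullet}),X^{\bullet}\bigr)\;\simeq\;\mathrm{Hom}_{\mathbf{D}(\mathcal{B})}\bigl(Y^{\bullet},R^{+}F(X^{\bullet})\bigr)
\]
holds in the ambient (one-sided bounded, resp. unbounded) derived categories; since $\mathbf{D}^{b}(\mathcal{A})\hookrightarrow\mathbf{D}(\mathcal{A})$ and $\mathbf{D}^{b}(\mathcal{B})\hookrightarrow\mathbf{D}(\mathcal{B})$ are fully faithful, and by the previous step both $L^{-}G(Y^{\bullet})$ and $R^{+}F(X^{\bullet})$ already lie in the bounded subcategories, this Hom-isomorphism is exactly the sought bounded adjunction, and it is natural in both variables because the unbounded one is. The main obstacle — really the only place where the hypotheses are used in an essential way — is the boundedness of the output: without finite cohomological dimension the hypercohomology spectral sequences need not converge to bounded cohomology, and the derived functors would genuinely leave $\mathbf{D}^{b}$. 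Everything else is formal transfer of an adjunction along fully faithful embeddings.
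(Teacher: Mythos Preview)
Your argument is correct and is the standard route to this result. The paper itself does not supply a proof at all: it simply cites \cite[Lemma~1.1]{CPS1}, so there is no in-paper approach to compare against; your sketch (derived adjunction on the one-sided bounded level, hypercohomology spectral sequence to force preservation of $\mathbf{D}^{b}$ under the finite-dimension hypotheses, then restriction along the fully faithful inclusions $\mathbf{D}^{b}\hookrightarrow\mathbf{D}$) is essentially the argument one finds in the cited reference.
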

\begin{proof}
See \cite[Lemma 1.1]{CPS1} in p. 399.
\end{proof}


In the following Lemma the hypothesis that $K$ is a field is crucial.

\begin{lemma}\label{proyectivoizqder}
Let $\mathcal{C}$ be a $K$-category and $P\in \mathrm{Mod}(\mathcal{C}^{e})$ be a projective $\mathcal{C}^{e}$-module. Then $P(-,C)\in \mathrm{Mod}(\mathcal{C}^{op})$ is a projective $\mathcal{C}^{op}$-module and $P(C,-)\in \mathrm{Mod}(\mathcal{C})$ is a projective $\mathcal{C}$-module.
\end{lemma}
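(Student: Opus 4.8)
The plan is to reduce the statement to the well-understood behaviour of projective objects in functor categories, namely that projectives in $\mathrm{Mod}(\mathcal{C}^{e})$ are generated by the representable functors $\mathcal{C}^{e}((C_{0},C_{1}),-) \simeq \mathcal{C}(C_{0},-)\otimes_{K}\mathcal{C}(-,C_{1})$ for objects $(C_{0},C_{1})\in \mathcal{C}^{e}$. Concretely, every projective $\mathcal{C}^{e}$-module $P$ is a direct summand of a coproduct $\coprod_{j} \mathcal{C}^{e}((A_{j},B_{j}),-)$. Since the operations $P\mapsto P(-,C)$ and $P\mapsto P(C,-)$ are evaluation functors followed by restriction, they commute with arbitrary coproducts and preserve direct summands; hence it suffices to check the claim on a single representable functor $\mathcal{C}^{e}((A,B),-)$.

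So the key step is the explicit computation: for a fixed object $C\in \mathcal{C}$, what is $\mathcal{C}^{e}((A,B),(-,C))$ as an object of $\mathrm{Mod}(\mathcal{C}^{op})$, and $\mathcal{C}^{e}((A,B),(C,-))$ as an object of $\mathrm{Mod}(\mathcal{C})$? Using the definition of the tensor product of $K$-categories, $\mathcal{C}^{e}((A,B),(X,Y)) = \mathcal{C}^{op}(A,X)\otimes_{K}\mathcal{C}(B,Y) = \mathcal{C}(X,A)\otimes_{K}\mathcal{C}(B,Y)$. Therefore $\mathcal{C}^{e}((A,B),(-,C))$, as a functor of the first variable (contravariant in $\mathcal{C}$, hence a right $\mathcal{C}$-module), is $X\mapsto \mathcal{C}(X,A)\otimes_{K}\mathcal{C}(B,C)$, which is the representable right module $\mathcal{C}(-,A)$ tensored over $K$ with the fixed vector space $V:=\mathcal{C}(B,C)$. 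Likewise $\mathcal{C}^{e}((A,B),(C,-))$ is $Y\mapsto \mathcal{C}(C,A)\otimes_{K}\mathcal{C}(B,Y)$, the representable left module $\mathcal{C}(B,-)$ tensored over $K$ with the fixed vector space $W:=\mathcal{C}(C,A)$. Now here is precisely where the hypothesis that $K$ is a field enters: $V$ and $W$ are $K$-vector spaces, hence free, so $V\simeq K^{(\mathcal{A})}$ and $W\simeq K^{(\Omega)}$ for index sets. Consequently $\mathcal{C}(-,A)\otimes_{K}V \simeq \mathcal{C}(-,A)^{(\mathcal{A})}$ is a coproduct of representable (hence projective) right $\mathcal{C}$-modules, and similarly $\mathcal{C}(B,-)\otimes_{K}W \simeq \mathcal{C}(B,-)^{(\Omega)}$ is a coproduct of projective left $\mathcal{C}$-modules; coproducts of projectives are projective, so both evaluations are projective.

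Assembling: given an arbitrary projective $P\in \mathrm{Mod}(\mathcal{C}^{e})$, write it as a summand of $Q=\coprod_{j}\mathcal{C}^{e}((A_{j},B_{j}),-)$. Then $P(-,C)$ is a summand of $Q(-,C)=\coprod_{j}\mathcal{C}^{e}((A_{j},B_{j}),(-,C))$, which by the above is a coproduct of projective right $\mathcal{C}$-modules, hence projective; a direct summand of a projective is projective, so $P(-,C)\in\mathrm{Mod}(\mathcal{C}^{op})$ is projective. The argument for $P(C,-)\in\mathrm{Mod}(\mathcal{C})$ is symmetric (apply the already-proved half to $\mathcal{C}^{op}$, noting $(\mathcal{C}^{op})^{e}\simeq \mathcal{C}^{e}$ up to the swap isomorphism).

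The main obstacle is not conceptual but a matter of being careful with the identifications: one must verify that evaluating a representable $\mathcal{C}^{e}$-module at $(-,C)$ really does yield $\mathcal{C}(-,A)\otimes_{K}\mathcal{C}(B,C)$ with the correct $\mathcal{C}^{op}$-module structure (i.e.\ that the functoriality in the first variable of $\mathcal{C}^{e}$ restricts correctly), and dually for $(C,-)$; and that the isomorphism $\mathcal{C}(-,A)\otimes_{K}K^{(\mathcal{A})}\simeq \mathcal{C}(-,A)^{(\mathcal{A})}$ of right $\mathcal{C}$-modules is natural, so that the decomposition of projectives is compatible with taking summands and coproducts. Once these bookkeeping points are in place, the field hypothesis does all the real work, since over a general commutative ring the vector spaces $\mathcal{C}(B,C)$ and $\mathcal{C}(C,A)$ need not be projective and the conclusion can fail.
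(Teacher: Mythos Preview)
Your proof is correct and follows essentially the same route as the paper: reduce to representable $\mathcal{C}^{e}$-modules, compute the evaluation $\mathcal{C}^{e}((A,B),(C,-))\simeq \mathcal{C}(C,A)\otimes_{K}\mathcal{C}(B,-)$, and then pass to summands of coproducts. The only cosmetic difference is that where you explicitly use freeness of the $K$-vector space $\mathcal{C}(C,A)$ to write the tensor product as a coproduct of representables, the paper instead invokes \cite[Corollary~11.7, p.~55]{Mitchelring} for the projectivity of $\mathcal{C}(C,A)\otimes_{K}\mathcal{C}(B,-)$; these amount to the same thing.
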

\begin{proof}
Let us suppose that $P$ is a representable module, $P:=\mathcal{C}^{e}\big((A,B),-\big)$. Hence
$P(C,-)=\mathcal{C}^{e}\big((A,B),(C,-)\big)$. Then for $D\in \mathcal{C}$ we have that
$$P(C,D)=\mathcal{C}^{e}\big((A,B),(C,D)\big)=\mathcal{C}^{op}(A,C)\otimes_{K} \mathcal{C}(B,D)=\mathcal{C}(C,A)\otimes_{K} \mathcal{C}(B,D)$$
Now, we consider $\mathcal{C}(C,A)\otimes_{K} \mathcal{C}(B,-)\in \mathrm{Mod}(\mathcal{C})$. By  \cite[Corollary 11.7 in p. 55]{Mitchelring}, we have that  $\mathcal{C}(C,A)\otimes_{K} \mathcal{C}(B,-)$  is a projective $\mathcal{C}$-module.
Moreover, by the above calculation we have that
$$P(C,-)=\mathcal{C}(C,A)\otimes_{K}\mathcal{C}(B,-).$$
Hence $P(C,-)=\mathcal{C}^{e}\big((A,B),(C,-)\big)$  is a projective $\mathcal{C}$-module.\\
Now, let $P\in \mathrm{Mod}(\mathcal{C}^{e})$ be an arbitrary projective $\mathcal{C}^{e}$-module. Thus, there exist $Q$ such that
$$P\oplus Q=\bigoplus_{i\in I}\mathcal{C}^{e}\big((A_{i},B_{i}),-\big).$$
Then, for $C\in \mathcal{C}$ we have that
$$P(C,-)\oplus Q(C,-)=\bigoplus_{i\in I}\mathcal{C}^{e}\big((A_{i},B_{i}),(C,-)\big)$$
where each $\mathcal{C}^{e}\big((A_{i},B_{i}),(C,-)\big)$ is a projective $\mathcal{C}$-module. Hence $P(C,-)$ is  a projective $\mathcal{C}$-module.\\
Similarly, we can prove that  $P(-,C)\in \mathrm{Mod}(\mathcal{C}^{op})$ is a projective $\mathcal{C}^{op}$-module.
\end{proof}

\begin{lemma}\label{CIfinitedimension}
Let $I$ be an ideal of $\mathcal{C}$ such that the projective dimension of $I$ as $\mathcal{C}^{e}$-module is finite.   Then the projective dimensions of $\mathcal{C}(-,C)/I(-,C)\in \mathrm{Mod}(\mathcal{C}^{op})$  and  $\mathcal{C}(C,-)/I(C,-)\in \mathrm{Mod}(\mathcal{C})$ are finite.
\end{lemma}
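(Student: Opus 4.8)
The plan is to reduce the claim about the quotient bimodules to the claim about $I$ itself via the canonical short exact sequence \eqref{succanonica} in $\mathrm{Mod}(\mathcal{C}^{e})$, namely
$$\xymatrix{0\ar[r] & I\ar[r] & \mathcal{C}\ar[r] & H \ar[r] & 0},$$
where $H=(\mathcal{C}/I)\circ(\pi^{op}\otimes\pi)$ and, for a fixed object $C$, $H(C,-)=\mathcal{C}(C,-)/I(C,-)$ while $H(-,C)=\mathcal{C}(-,C)/I(-,C)$. First I would observe that the representable bimodule $\mathcal{C}=\mathcal{C}(-,-)$ is projective in $\mathrm{Mod}(\mathcal{C}^{e})$, so the short exact sequence above shows that $\mathrm{pd}_{\mathcal{C}^{e}}(H)\leq \mathrm{pd}_{\mathcal{C}^{e}}(I)+1<\infty$. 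Concretely, taking a finite projective resolution $0\to P_n\to\cdots\to P_0\to I\to 0$ in $\mathrm{Mod}(\mathcal{C}^{e})$ and splicing it with $\mathcal{C}$ yields a finite projective resolution $0\to P_n\to\cdots\to P_0\to \mathcal{C}\to H\to 0$ of $H$ of length $n+1$.

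Next I would evaluate this finite projective resolution of $H$ at the object $C$ in the second variable. Evaluation $(-)\mapsto (-)(C,-)\colon \mathrm{Mod}(\mathcal{C}^{e})\to\mathrm{Mod}(\mathcal{C})$ is exact (it is evaluation of functors), so applying it to the resolution gives an exact sequence
$$0\to P_n(C,-)\to\cdots\to P_0(C,-)\to \mathcal{C}(C,-)\to H(C,-)\to 0$$
in $\mathrm{Mod}(\mathcal{C})$. By Lemma \ref{proyectivoizqder} — and this is exactly where the hypothesis that $K$ is a field is used — each $P_i(C,-)$ is a projective $\mathcal{C}$-module, and $\mathcal{C}(C,-)$ is representable, hence projective. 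Therefore the displayed sequence is a finite projective resolution of $H(C,-)=\mathcal{C}(C,-)/I(C,-)$ of length at most $n+1$, proving $\mathrm{pd}_{\mathcal{C}}\big(\mathcal{C}(C,-)/I(C,-)\big)\leq \mathrm{pd}_{\mathcal{C}^{e}}(I)+1<\infty$.

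The argument for $\mathcal{C}(-,C)/I(-,C)\in\mathrm{Mod}(\mathcal{C}^{op})$ is symmetric: apply evaluation in the \emph{first} variable, $(-)\mapsto (-)(-,C)\colon \mathrm{Mod}(\mathcal{C}^{e})\to\mathrm{Mod}(\mathcal{C}^{op})$, to the same finite projective resolution of $H$; again this functor is exact, the second half of Lemma \ref{proyectivoizqder} guarantees that the $P_i(-,C)$ are projective $\mathcal{C}^{op}$-modules, and $\mathcal{C}(-,C)$ is representable hence projective. One can also phrase this uniformly by passing to $\mathcal{C}^{op}$, noting $I^{op}$ is an ideal of $\mathcal{C}^{op}$ with $(\mathcal{C}^{op})^{e}\simeq\mathcal{C}^{e}$ (up to the swap isomorphism) and $\mathrm{pd}_{(\mathcal{C}^{op})^{e}}(I^{op})=\mathrm{pd}_{\mathcal{C}^{e}}(I)<\infty$, then invoking the already-proved case. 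I do not expect a genuine obstacle here; the only point requiring care is the verification that evaluation at an object is exact and that Lemma \ref{proyectivoizqder} applies to \emph{all} projectives (not merely the representable ones), which is handled by the direct-summand argument already given in the proof of that lemma.
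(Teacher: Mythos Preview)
Your argument is essentially the paper's own proof: take a finite projective resolution of $I$ in $\mathrm{Mod}(\mathcal{C}^{e})$, splice with the canonical sequence $0\to I\to\mathcal{C}\to H\to 0$, evaluate at a fixed object, and invoke Lemma~\ref{proyectivoizqder} together with the projectivity of the representable $\mathcal{C}(C,-)$. The logical flow is correct.

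One inaccuracy to fix: you call $\mathcal{C}=\mathcal{C}(-,-)$ ``the representable bimodule'' and assert it is projective in $\mathrm{Mod}(\mathcal{C}^{e})$. This is false in general. A representable $\mathcal{C}^{e}$-module has the form $\mathcal{C}^{e}\big((A,B),-\big)\cong \mathcal{C}(-,A)\otimes_{K}\mathcal{C}(B,-)$, which is not $\mathcal{C}(-,-)$; already for a one-object category with endomorphism ring $R$, the bimodule $R$ is projective over $R^{e}=R^{op}\otimes_{K}R$ only when $R$ is $K$-separable. Consequently the spliced sequence $0\to P_{n}\to\cdots\to P_{0}\to\mathcal{C}\to H\to 0$ is an exact sequence, but not a projective resolution of $H$ in $\mathrm{Mod}(\mathcal{C}^{e})$, and the inequality $\mathrm{pd}_{\mathcal{C}^{e}}(H)\le \mathrm{pd}_{\mathcal{C}^{e}}(I)+1$ is unjustified. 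Fortunately your proof does not actually use this: after evaluating, you correctly argue that $\mathcal{C}(C,-)$ is representable in $\mathrm{Mod}(\mathcal{C})$, hence projective there, which is all that is needed. Simply drop the claim about projectivity in $\mathrm{Mod}(\mathcal{C}^{e})$ and state directly (as the paper does) that the evaluated sequence is a projective resolution in $\mathrm{Mod}(\mathcal{C})$.
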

\begin{proof}
We will do the case $\mathcal{C}(C,-)/I(C,-)$ since the other is similar.
Consider the exact sequence in $\mathrm{Mod}(\mathcal{C}^{e})$:
$$\xymatrix{0\ar[r] & I\ar[r] & \mathcal{C}\ar[r] & H\ar[r] & 0}$$
where  $H:=(\mathcal{C}/I)\circ (\pi^{op}\otimes \pi)\in  \mathrm{Mod}(\mathcal{C}^{e})$.\\
Suppose that $P^{\bullet}$ is a  finite projective resolution of $I$ in $\mathrm{Mod}(\mathcal{C}^{e})$:

$$\scalebox{0.9}{\xymatrix{P^{\bullet}: 0\ar[r] & P_{n}(-,-)\ar[r] & \cdots\ar[r] & P_{1}(-,-)\ar[r] & P_{0}(-,-)\ar[r] & I(-,-)\ar[r] & 0}}$$
Hence, by Lemma \ref{proyectivoizqder}, we have that  $P^{\bullet}(C,-)$ is a projective resolution of $I(C,-)$:
$$\scalebox{0.9}{\xymatrix{P^{\bullet}(C,-):0\ar[r] & P_{n}(C,-)\ar[r] & \cdots\ar[r] & P_{1}(C,-)\ar[r] & P_{0}(C,-)\ar[r] & I(C,-)\ar[r] & 0}}$$
It follows that $H(C,-)=\mathcal{C}(C,-)/I(C,-)$ has finite projective dimension in $\mathrm{Mod}(\mathcal{C})$. Indeed,  for each $C\in \mathcal{C}$ we have projective resolution of $\mathcal{C}(C,-)/I(C,-)$: 
$$\scalebox{0.9}{\xymatrix{0\ar[r] & P_{n}(C,-)\ar[r] & \cdots\ar[r] & P_{0}(C,-)\ar[r] &  \mathcal{C}(C,-)\ar[r] &\mathcal{C}(C,-)/I(C,-)\ar[r] & 0 }}$$
\end{proof}

\begin{lemma}\label{tensorfiniecomo}
Let $I$ be an ideal of $\mathcal{C}$ such that the projective dimension of $I$ as $\mathcal{C}^{e}$-module is finite. Then the  functor 
$$F:=\mathbb{F}(H_{1},-)=H_{1}\boxtimes_{\mathcal{C}}-:\mathrm{Mod}(\mathcal{C})\longrightarrow \mathrm{Mod}(\mathcal{C}/I)$$
has finite left cohomological dimension.
\end{lemma}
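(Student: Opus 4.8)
The plan is to compute $L^{-}F$ objectwise as a $\mathrm{Tor}$ over $\mathcal{C}$ and then to bound it by a single integer, uniform in all data, using Lemma \ref{CIfinitedimension}.

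First I would invoke Remark \ref{2bifuntores} to identify $F=H_{1}\boxtimes_{\mathcal{C}}-$ with the functor $\frac{\mathcal{C}}{I}\otimes_{\mathcal{C}}-\colon\mathrm{Mod}(\mathcal{C})\to\mathrm{Mod}(\mathcal{C}/I)$, whose $i$-th left derived functor is $\mathbb{TOR}^{\mathcal{C}}_{i}(\mathcal{C}/I,-)$ by definition. Hence Remark \ref{descEXT}(b) gives, for all $M\in\mathrm{Mod}(\mathcal{C})$ and all $C\in\mathcal{C}/I$,
$$L_{i}F(M)(C)=H^{-i}\!\big(L^{-}F(M)\big)(C)=\mathrm{Tor}^{\mathcal{C}}_{i}\!\left(\frac{\mathcal{C}(-,C)}{I(-,C)},\,M\right).$$
Since a module in $\mathrm{Mod}(\mathcal{C}/I)$ vanishes precisely when it vanishes on every object, it then suffices to produce one integer $n\ge 0$, independent of both $C$ and $M$, with $\mathrm{Tor}^{\mathcal{C}}_{i}\!\big(\frac{\mathcal{C}(-,C)}{I(-,C)},M\big)=0$ for all $i>n$.

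Next I would set $d:=\mathrm{pd}_{\mathrm{Mod}(\mathcal{C}^{e})}(I)<\infty$ and fix a projective resolution $P^{\bullet}\to I$ of length $d$ in $\mathrm{Mod}(\mathcal{C}^{e})$. Evaluating $P^{\bullet}$ at $C$ in its second variable is exact and, by Lemma \ref{proyectivoizqder}, sends $\mathcal{C}^{e}$-projectives to $\mathcal{C}^{op}$-projectives; arguing exactly as in the proof of Lemma \ref{CIfinitedimension}, this yields a projective resolution of $\mathcal{C}(-,C)/I(-,C)$ in $\mathrm{Mod}(\mathcal{C}^{op})$ of length at most $d+1$ (using also that $\mathcal{C}(-,C)$ is representable, hence projective). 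The key point is that the bound $d+1$ is the \emph{same} for every $C$, since it comes from the single fixed complex $P^{\bullet}$. Then, using that $\mathrm{Tor}^{\mathcal{C}}_{i}(-,-)$ is balanced, i.e. computable from a projective resolution of either argument (see \cite{AuslanderRep1}), I conclude $\mathrm{Tor}^{\mathcal{C}}_{i}\!\big(\frac{\mathcal{C}(-,C)}{I(-,C)},M\big)=0$ for every $C$, every $M$, and all $i>d+1$.

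Combining this with the displayed identity shows $L_{i}F(M)=0$ for all $M\in\mathrm{Mod}(\mathcal{C})$ and all $i>d+1$, so $F$ has finite left cohomological dimension, bounded by $d+1$. I expect the only genuinely delicate step to be the uniformity of this bound over the objects $C$; this is precisely why I would work with one fixed $\mathcal{C}^{e}$-projective resolution of $I$ rather than applying Lemma \ref{CIfinitedimension} object by object, which by itself would not obviously deliver a common bound. The identification of $F$ with $\frac{\mathcal{C}}{I}\otimes_{\mathcal{C}}-$ and the balancedness of $\mathrm{Tor}^{\mathcal{C}}$ are then routine.
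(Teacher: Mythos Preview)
Your proposal is correct and follows essentially the same route as the paper: identify $F$ with $\frac{\mathcal{C}}{I}\otimes_{\mathcal{C}}-$ via Remark \ref{2bifuntores}, evaluate its left derived functors objectwise as $\mathrm{Tor}_{i}^{\mathcal{C}}\!\big(\frac{\mathcal{C}(-,C)}{I(-,C)},M\big)$, and bound these using the finite projective resolution coming from Lemma \ref{CIfinitedimension}. Your explicit emphasis on the uniformity of the bound $d+1$ over all objects $C$ (by working with a single fixed $\mathcal{C}^{e}$-resolution of $I$) is a genuine improvement in clarity over the paper, which writes ``Let $n:=\mathrm{pd}(H_{1}(-,C))$'' as if $n$ might depend on $C$ and never addresses the point; the uniform bound is implicit in the proof of Lemma \ref{CIfinitedimension}, but you are right that it deserves to be said.
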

\begin{proof}
Using the notation  of Proposition \ref{funtorderivadotensor}, we have the left derived funtor $$L_{II}^{-}\mathbb{F}(H_{1},-):\mathbf{D}^{-}\Big(\mathrm{Mod}(\mathcal{C})\Big)\longrightarrow \mathbf{D}^{-}\Big(\mathrm{Mod}(\mathcal{C}/I)\Big)$$
By simplicity, let us denote
$$L^{-}F:=L_{II}^{-}\mathbb{F}(H_{1},-).$$
We  have that $H_{1}(-,C)=\mathcal{C}(-,C)/I(-,C)$ has finite projective dimension in $\mathrm{Mod}(\mathcal{C}^{op})$ (see Lemma \ref{CIfinitedimension}). Let $n:=\mathrm{pd}(H_{1}(-,C))$. We assert that 
$$L_{i}F(M):=H^{i}(L^{-}F(M)\Big)=0$$ for all $M\in \mathrm{Mod}(\mathcal{C})$ and for all $i>n$.\\
Indeed, we have that $H_{1}\boxtimes_{\mathcal{C}}-=\mathcal{C}/I\otimes_{\mathcal{C}}-$ (see Remark \ref{2bifuntores}).  Hence we have that
$$H^{-i}\circ L^{-}F\simeq L_{i}(H_{1}\boxtimes_{\mathcal{C}}-)=L_{i}(\mathcal{C}/I\otimes_{\mathcal{C}}-)$$
where $L_{i}(\mathcal{C}/I\otimes_{\mathcal{C}}-)$ is the i-th classical left derived funtor (see \cite[Corollary  10.5.7 and Remark 10.5.8]{Weibel}).
But according to \cite[Definition 3.15]{RSS}, we have that the $i$-th left  classical derived functor of $\mathcal{C}/I\otimes_{\mathcal{C}}-$  is $\mathbb{TOR}_{i}^{\mathcal{C}}(\mathcal{C}/I,-)$. Then

$$H^{-i}(L^{-}F(M)\Big)\simeq \mathbb{TOR}_{i}^{\mathcal{C}}(\mathcal{C}/I,M)\in \mathrm{Mod}(\mathcal{C}/I).$$
Hence, for $C\in\mathcal{C}/I$ we have that
$$\mathbb{TOR}_{i}^{\mathcal{C}}(\mathcal{C}/I,M)(C)=\mathrm{Tor}_{i}^{\mathcal{C}}\Big(\frac{\mathcal{C}(-,C)}{I(-,C)},M\Big)$$
(see Remark \ref{2bifuntores}).  Using the projective resolution $P^{\bullet}$ of $\mathcal{C}(-,C)/I(-,C)$ of length $n$, by definition we get that
$$\mathrm{Tor}_{i}^{\mathcal{C}}\Big(\frac{\mathcal{C}(-,C)}{I(-,C)},M\Big)=H^{-i}\Big(P_{\bullet}\otimes_{\mathcal{C}}M\Big).$$
Since $(P^{\bullet}\otimes_{\mathcal{C}}M)^{i}=P^{i}\otimes_{\mathcal{C}} M$ we have that $(P^{\bullet}\otimes_{\mathcal{C}}M)^{i}=0$ if $i>n$. Hence we have that $H^{i}\Big(P^{\bullet}\otimes_{\mathcal{C}}M\Big)=0$ for $i>n$.

Therefore, we conclude that
$$H^{-i}(L^{-}F(M)\Big)\simeq \mathbb{TOR}_{i}^{\mathcal{C}}(\mathcal{C}/I,M)=0$$
for all $M\in \mathrm{Mod}(\mathcal{C})$ and for all $i>n$. Proving that $F$ has finite left cohomological dimension.
\end{proof}

\begin{lemma}\label{triangululocan}
Consider a bounded  complex in $D^{b}(\mathcal{A})$:
$$\xymatrix{X^{\bullet}:\quad \ar[r] & 0\ar[r]  & X^{a}\ar[r] & X^{a+1}\ar[r] & \cdots \ar[r] & X^{b}\ar[r]  & 0\ar[r] & \cdots}$$
Consider the stupid truncation 

$$\xymatrix{\sigma^{>a}(X^{\bullet}):\quad \ar[r] & 0\ar[r] & 0\ar[r] & X^{a+1}\ar[r] & \cdots \ar[r] & X^{b}\ar[r]  & 0\ar[r] & \cdots}$$
Hence we have a triangle in the derived category $\mathbf{D}^{b}(\mathcal{A})$:
$$\xymatrix{\sigma^{>a}(X^{\bullet})\ar[r] &  X^{\bullet}\ar[r] & X^{a}[-a]\ar[r] & \sigma^{>a}(X^{\bullet})[1]}$$ where $X^{a}[-a]$ is the complex concentrated in degree $a$, such that in degree $a$ has the term $X^{a}$ and zero elsewhere.
\end{lemma}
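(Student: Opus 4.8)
The plan is to exhibit the asserted triangle as the one canonically attached to a short exact sequence of complexes. First I would observe that, since $X^{\bullet}$ is concentrated in degrees between $a$ and $b$, the stupid truncation $\sigma^{>a}(X^{\bullet})$ is a genuine \emph{sub}complex of $X^{\bullet}$: it agrees with $X^{\bullet}$ in degrees $>a$ and is zero in degrees $\leq a$, and the inclusion is compatible with differentials since $d^{i}(X^{i})\subseteq X^{i+1}$ with $i+1>a$ whenever $i\geq a+1$. The quotient complex $X^{\bullet}/\sigma^{>a}(X^{\bullet})$ then has $X^{i}$ in degrees $\leq a$ and zero in degrees $>a$; as $X^{j}=0$ for $j<a$, this quotient is exactly $X^{a}$ placed in cohomological degree $a$, i.e.\ $X^{a}[-a]$ with the convention fixed in the statement. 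Hence I obtain a short exact sequence of complexes
$$\xymatrix{0\ar[r] & \sigma^{>a}(X^{\bullet})\ar[r]^{\iota} & X^{\bullet}\ar[r]^{p} & X^{a}[-a]\ar[r] & 0}$$
in which $\iota$ is the inclusion and $p$ is the identity in degree $a$ and zero elsewhere.

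Second, I would note that this short exact sequence is split in each degree: degreewise it is, up to isomorphism, one of the trivial sequences $0\to 0\to X^{i}\to 0\to 0$ (for $i>a$), $0\to 0\to X^{a}\to X^{a}\to 0$ (for $i=a$), or the zero sequence (for $i<a$). In particular $\iota$ is a termwise-split monomorphism, so the canonical comparison map from its mapping cone $\mathrm{Cone}(\iota)$ to the cokernel $X^{a}[-a]$ is an isomorphism in $\mathbf{K}^{b}(\mathcal{A})$ — a quasi-isomorphism would already be enough. Transporting the standard mapping-cone triangle $\sigma^{>a}(X^{\bullet})\xrightarrow{\iota}X^{\bullet}\to\mathrm{Cone}(\iota)\to\sigma^{>a}(X^{\bullet})[1]$ along this isomorphism yields a distinguished triangle $\sigma^{>a}(X^{\bullet})\to X^{\bullet}\to X^{a}[-a]\to\sigma^{>a}(X^{\bullet})[1]$ in $\mathbf{K}^{b}(\mathcal{A})$, and its image under the localization functor $\mathbf{K}^{b}(\mathcal{A})\to\mathbf{D}^{b}(\mathcal{A})$ is the triangle claimed in the lemma.

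I do not expect any real obstacle: the statement is just a normalization of the brutal-truncation triangle for a bounded complex. The only point deserving a moment's care is the bookkeeping of the shift — checking that the cokernel of $\iota$ is $X^{a}[-a]$ and not some other shift of $X^{a}$ — together with, for a reader who prefers to bypass mapping cones, the standard fact that any short exact sequence of complexes induces a distinguished triangle in $\mathbf{D}^{b}(\mathcal{A})$ with a connecting morphism $X^{a}[-a]\to\sigma^{>a}(X^{\bullet})[1]$. Both are routine and may be quoted from the standard references on the triangulated structure of $\mathbf{D}^{\ast}(\mathcal{A})$ recalled in Section~2.
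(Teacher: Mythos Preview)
Your proposal is correct and follows essentially the same approach as the paper: exhibit the short exact sequence of complexes $0\to\sigma^{>a}(X^{\bullet})\to X^{\bullet}\to X^{a}[-a]\to 0$ and invoke the standard fact that such a sequence yields a distinguished triangle in $\mathbf{D}^{b}(\mathcal{A})$. The paper's proof is actually less detailed than yours, simply displaying the morphisms degreewise and then asserting the triangle, whereas you additionally justify the step via the termwise splitting and the mapping-cone comparison.
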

\begin{proof}
We have the morphism of complexes

$$\xymatrix{\sigma^{>a}(X^{\bullet}):\quad \ar[r] & 0\ar[d]^{0}\ar[r] & 0\ar[d]^{0}\ar[r] & X^{a+1}\ar[r]\ar[d]^{1} & \cdots \ar[r] & X^{b}\ar[r]\ar[d]^{1}  & 0\ar[r]\ar[d] & \cdots\\
X^{\bullet}\quad \ar[r] & 0\ar[r]\ar[d] & X^{a}\ar[r]^{u}\ar[d]^{1} & X^{a+1}\ar[r]\ar[d]^{0} & \cdots \ar[r] & X^{b}\ar[r]\ar[d]^{0}  & 0\ar[r]\ar[d] & \cdots\\
X^{a}[-a]\quad \ar[r] & 0\ar[r] & X^{a}\ar[r] & 0\ar[r] & \cdots \ar[r] & 0\ar[r]  & 0\ar[r] & \cdots}$$
That is we have the exact sequence of complexes
$$\xymatrix{0\ar[r] & \sigma^{>a}(X^{\bullet})\ar[r] & X^{\bullet}\ar[r] & X^{a}[-a]\ar[r] & 0}$$
where $X^{a}[-a]$ is the complex concentrated in degree $a$, such that in degree $a$ has the term $X^{a}$ and zero elsewhere. Hence we have a triangle in the derived category $\mathbf{D}^{b}(\mathcal{A})$:
$$\xymatrix{\sigma^{>a}(X^{\bullet})\ar[r] &  X^{\bullet}\ar[r] & X^{a}[-a]\ar[r] & \sigma^{>a}(X^{\bullet})[1]}$$
\end{proof}

\begin{lemma}\label{Lmandaperf}
Let $\mathcal{A}$ and $\mathcal{B}$ be abelian categories with enough projectives. Let $F:\mathcal{A}\longrightarrow \mathcal{B}$ a right exact functor. Suppose that we have left derived funtor $L^{b}(F):D^{b}(\mathcal{A})\longrightarrow D^{b}(\mathcal{B})$. If $F$ preserve projectives, then 
$$L^{b}(F)(P^{\bullet})\in \mathrm{Perf}(\mathcal{B})$$
for all $P^{\bullet}\in \mathrm{Perf}(\mathcal{A})$.
\end{lemma}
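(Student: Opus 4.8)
The plan is to reduce the statement about an arbitrary perfect complex $P^{\bullet}$ to the case of a single projective object placed in a single degree, and then invoke the hypothesis that $F$ preserves projectives. First I would recall that, by definition, $\mathrm{Perf}(\mathcal{A})$ consists of complexes isomorphic in $\mathbf{D}^{b}(\mathcal{A})$ to a bounded complex of projective objects; since $L^{b}(F)$ is a triangle functor and $\mathrm{Perf}(\mathcal{B})$ is a thick (in particular triangulated) subcategory of $\mathbf{D}^{b}(\mathcal{B})$, it suffices to prove the claim for $P^{\bullet}$ an honest bounded complex of projectives, say concentrated in degrees $a \leq \bullet \leq b$. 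I would then induct on the length $b - a$ of this complex.

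For the base case $b = a$, the complex is $P^{a}[-a]$ with $P^{a}$ projective in $\mathcal{A}$. Because $F$ is right exact and preserves projectives, the value of the left derived functor on a projective object concentrated in one degree is computed by $F$ itself (the identity complex $P^{a} \to P^{a}$ serves as a projective resolution), so $L^{b}(F)(P^{a}[-a]) \simeq (F P^{a})[-a]$, and $F P^{a}$ is projective in $\mathcal{B}$; hence this lies in $\mathrm{Perf}(\mathcal{B})$. For the inductive step, I would apply Lemma~\ref{triangululocan} to $P^{\bullet}$ to obtain the triangle
$$\xymatrix{\sigma^{>a}(P^{\bullet})\ar[r] &  P^{\bullet}\ar[r] & P^{a}[-a]\ar[r] & \sigma^{>a}(P^{\bullet})[1]}$$
in $\mathbf{D}^{b}(\mathcal{A})$, where $\sigma^{>a}(P^{\bullet})$ is again a bounded complex of projectives but of strictly smaller length. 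Applying the triangle functor $L^{b}(F)$ yields a triangle in $\mathbf{D}^{b}(\mathcal{B})$ with two of its three vertices, namely $L^{b}(F)(\sigma^{>a}(P^{\bullet}))$ and $L^{b}(F)(P^{a}[-a])$, already known to lie in $\mathrm{Perf}(\mathcal{B})$ — the first by the induction hypothesis, the second by the base-case computation. Since $\mathrm{Perf}(\mathcal{B})$ is a triangulated subcategory, the third vertex $L^{b}(F)(P^{\bullet})$ also lies in $\mathrm{Perf}(\mathcal{B})$, completing the induction.

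The only genuinely delicate point is the base-case identification $L^{b}(F)(P^{a}[-a]) \simeq (FP^{a})[-a]$ together with the assertion that this is computed correctly by the \emph{bounded} left derived functor: one must know that the bounded derived functor $L^{b}(F)$ exists and agrees with the restriction of $L^{-}F$, and that a projective object is $F$-acyclic so that its derived value is just its image. This is where the standing hypothesis that such an $L^{b}(F)$ is given in the statement is used; in the applications of interest (Lemma~\ref{tensorfiniecomo}) this existence is guaranteed by finite left cohomological dimension, but here it is simply assumed, so no further work is needed. Everything else is a formal consequence of $L^{b}(F)$ being a triangle functor and $\mathrm{Perf}(\mathcal{B})$ being triangulated and thick.
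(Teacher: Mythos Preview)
Your proposal is correct and follows essentially the same approach as the paper: induction on the length of a bounded complex of projectives, base case handled via $L^{b}(F)(P[k])\simeq F(P)[k]$, and inductive step via the stupid-truncation triangle of Lemma~\ref{triangululocan} together with the fact that $\mathrm{Perf}(\mathcal{B})$ is a triangulated subcategory. Your additional remark reducing from an arbitrary perfect complex to an honest bounded complex of projectives, and your discussion of why the base-case identification is legitimate, are welcome clarifications that the paper leaves implicit.
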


\begin{proof}
Consider
$$\xymatrix{P^{\bullet}:\quad \ar[r] & 0\ar[r]  & P^{a}\ar[r] & P^{a+1}\ar[r] & \cdots \ar[r] & P^{b}\ar[r]  & 0\ar[r] & \cdots}$$
where each $P^{i}$ is a projective object in $\mathcal{A}$. We proceed, by induction on the length of the complex $n:=b-a$. If $n=0$,
we have that $P^{\bullet}$ is of the form $P[k]$ for some projective object $P\in\mathcal{A}$ and $k\in\mathbb{Z}$. Hence $L^{b}(F)(P^{\bullet})=L^{b}(F)(P[k])=F(P)[k]\in \mathrm{Perf}(\mathcal{B})$ since $F(P)$ is a projective object in $\mathcal{B}$.\\
Consider $P^{\bullet}$ with lenght $n=b-a\geq 1$ and its stupid truncation $\sigma^{>a}(P^{\bullet})$. Hence, by Lemma \ref{triangululocan}, we have a triangle in the derived category $\mathbf{D}^{b}(\mathcal{A})$:
$$\xymatrix{\sigma^{>a}(P^{\bullet})\ar[r] &  P^{\bullet}\ar[r] & P^{a}[-a]\ar[r] & \sigma^{>a}(P^{\bullet})[1]}$$
where $\sigma^{>a}(P^{\bullet})$ is a perfect complex  with lenght  $n-1=b-(a+1)$ and $P^{a}[-a]$ with lenght $0$.
Since $L^{b}(F)$ is triangulated functor, we have the triangle in $\mathbf{D}^{b}(\mathcal{B})$:
$$\xymatrix{L^{b}(F)(\sigma^{>a}(P^{\bullet}))\ar[r] &  L^{b}(F)(P^{\bullet})\ar[r] & L^{b}(F)(P^{a}[-a])\ar[r] & L^{b}(F)(\sigma^{>a}(P^{\bullet}))[1]}.$$
By induction hypothesis we have that $L^{b}(F)(\sigma^{>a}(P^{\bullet})), L^{b}(F)(P^{a}[-a])\in \mathrm{Perf}(\mathcal{B})$ and  since $\mathrm{Perf}(\mathcal{B})$ is a triangulated subcategory we conclude that $ L^{b}(F)(P^{\bullet})\in \mathrm{Perf}(\mathcal{B})$.
\end{proof}

\begin{corollary}
Let $\mathcal{A}$ and $\mathcal{B}$ be abelian categories with enough projectives. Let $F:\mathcal{A}\longrightarrow \mathcal{B}$ be a right exact functor, and let $L^{b}(F):D^{b}(\mathcal{A})\longrightarrow D^{b}(\mathcal{B})$ be its left derived functor. If $F$ preserves projectives, then it induces a functor
$$\overline{L^{b}(F)}:\frac{\mathbf{D}^{b}(\mathcal{A})}{\mathrm{Perf}(\mathcal{A})}\longrightarrow \frac{\mathbf{D}^{b}(\mathcal{B})}{\mathrm{Perf}(\mathcal{B})}.$$
\end{corollary}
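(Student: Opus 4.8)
The plan is to deduce this from the universal property of the Verdier quotient, with the only real input being Lemma \ref{Lmandaperf}. First I would recall that, as noted just before the definition of the singularity category, $\mathrm{Perf}(\mathcal{A})$ and $\mathrm{Perf}(\mathcal{B})$ are thick triangulated subcategories of $\mathbf{D}^{b}(\mathcal{A})$ and $\mathbf{D}^{b}(\mathcal{B})$ respectively, so the Verdier quotients $\mathbf{D}^{b}(\mathcal{A})/\mathrm{Perf}(\mathcal{A})$ and $\mathbf{D}^{b}(\mathcal{B})/\mathrm{Perf}(\mathcal{B})$ are well-defined triangulated categories; write $q_{\mathcal{A}}\colon \mathbf{D}^{b}(\mathcal{A})\longrightarrow \mathbf{D}^{b}(\mathcal{A})/\mathrm{Perf}(\mathcal{A})$ and $q_{\mathcal{B}}\colon \mathbf{D}^{b}(\mathcal{B})\longrightarrow \mathbf{D}^{b}(\mathcal{B})/\mathrm{Perf}(\mathcal{B})$ for the corresponding quotient functors, each of which is triangulated.

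Next I would consider the composite triangulated functor $q_{\mathcal{B}}\circ L^{b}(F)\colon \mathbf{D}^{b}(\mathcal{A})\longrightarrow \mathbf{D}^{b}(\mathcal{B})/\mathrm{Perf}(\mathcal{B})$. Since $F$ preserves projectives by hypothesis, Lemma \ref{Lmandaperf} gives $L^{b}(F)(P^{\bullet})\in \mathrm{Perf}(\mathcal{B})$ for every $P^{\bullet}\in \mathrm{Perf}(\mathcal{A})$. As $q_{\mathcal{B}}$ annihilates $\mathrm{Perf}(\mathcal{B})$ — that is, it sends each object of $\mathrm{Perf}(\mathcal{B})$ to an object isomorphic to $0$ in $\mathbf{D}^{b}(\mathcal{B})/\mathrm{Perf}(\mathcal{B})$ — it follows that the composite $q_{\mathcal{B}}\circ L^{b}(F)$ sends every object of $\mathrm{Perf}(\mathcal{A})$ to zero.

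Finally I would invoke the universal property of the Verdier quotient (see \cite{Neeman}, \cite{Thorsten}): any triangulated functor out of $\mathbf{D}^{b}(\mathcal{A})$ that vanishes on the thick subcategory $\mathrm{Perf}(\mathcal{A})$ factors uniquely, as a triangulated functor, through $q_{\mathcal{A}}$. Applying this to $q_{\mathcal{B}}\circ L^{b}(F)$ produces a unique triangulated functor
$$\overline{L^{b}(F)}\colon \frac{\mathbf{D}^{b}(\mathcal{A})}{\mathrm{Perf}(\mathcal{A})}\longrightarrow \frac{\mathbf{D}^{b}(\mathcal{B})}{\mathrm{Perf}(\mathcal{B})}$$
satisfying $\overline{L^{b}(F)}\circ q_{\mathcal{A}}=q_{\mathcal{B}}\circ L^{b}(F)$, which is exactly the asserted induced functor.

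There is essentially no substantive obstacle beyond Lemma \ref{Lmandaperf}; the only point requiring a little care is the precise hypothesis of the universal property. It demands a triangulated functor that sends the thick subcategory to \emph{zero}, not merely one that sends it into another thick subcategory, and one obtains such a functor precisely by post-composing $L^{b}(F)$ with $q_{\mathcal{B}}$ and using that $L^{b}(F)$ preserves perfect complexes — so the statement genuinely relies on the ``preserves projectives'' assumption and not only on $L^{b}(F)$ being triangulated.
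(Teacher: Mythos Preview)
Your proposal is correct and matches the paper's intent: the corollary is stated immediately after Lemma~\ref{Lmandaperf} with no proof, so the paper treats it as an immediate consequence of that lemma together with the universal property of the Verdier quotient, which is exactly the argument you spell out. There is nothing more to add.
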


\begin{lemma}\label{perfectideal}
Let $\mathcal{C}$ be a $K$-category and $M\in \mathrm{Mod}(\mathcal{C}^{e})$ of finite projective dimension in $\mathrm{Mod}(\mathcal{C}^{e})$. Hence $L_{I}^{-}\mathbb{F}(M,Y^{\bullet})\in \mathrm{Perf}(\mathrm{Mod}(\mathcal{C}))$ for $Y^{\bullet}\in K^{b}(\mathrm{Mod}(\mathcal{C}))$.
\end{lemma}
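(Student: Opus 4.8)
The plan is to reduce the computation of $L_{I}^{-}\mathbb{F}(M,Y^{\bullet})$ to an honest bounded complex of projective $\mathcal{C}$-modules. Since $M$ has finite projective dimension, say $n:=\mathrm{pd}_{\mathrm{Mod}(\mathcal{C}^{e})}(M)$, I would fix a finite projective resolution $0\to P^{-n}\to\cdots\to P^{-1}\to P^{0}\to M\to 0$ in $\mathrm{Mod}(\mathcal{C}^{e})$ and let $P^{\bullet}$ be the associated bounded complex of projectives, so that $P^{\bullet}\in \mathbf{K}^{b}\big(\mathrm{Proj}(\mathrm{Mod}(\mathcal{C}^{e}))\big)$ and $P^{\bullet}\simeq M$ in $\mathbf{D}^{-}(\mathrm{Mod}(\mathcal{C}^{e}))$. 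Because the left derived bifunctor of Proposition \ref{funtorderivadotensor} is computed, in the first variable, on complexes belonging to the adapted class of bounded-above complexes of projective $\mathcal{C}^{e}$-modules, and $P^{\bullet}$ already lies in that class, one obtains an isomorphism
$$L_{I}^{-}\mathbb{F}(M,Y^{\bullet})\;\simeq\;\mathbb{F}(P^{\bullet},Y^{\bullet})\;=\;\mathrm{Tot}\big(P^{\bullet}\boxtimes_{\mathcal{C}}Y^{\bullet}\big)$$
in $\mathbf{D}^{-}(\mathrm{Mod}(\mathcal{C}))$, the right-hand side being the total complex of the double complex with $(i,j)$-entry $P^{i}\boxtimes_{\mathcal{C}}Y^{j}$. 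Since $P^{\bullet}$ has length $n$ and $Y^{\bullet}$ is bounded, this total complex is bounded, with terms the finite direct sums $\bigoplus_{i+j=m}P^{i}\boxtimes_{\mathcal{C}}Y^{j}$.

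The key point, and the step I expect to be the main obstacle, is the claim that for a projective $\mathcal{C}^{e}$-module $P$ and an \emph{arbitrary} $\mathcal{C}$-module $Y$, the $\mathcal{C}$-module $P\boxtimes_{\mathcal{C}}Y$ is projective. Following the pattern of Lemma \ref{proyectivoizqder}, I would first handle the representable case $P=\mathcal{C}^{e}\big((A,B),-\big)$: for $C\in\mathcal{C}$ one has $P(-,C)\simeq \mathcal{C}(-,A)\otimes_{K}\mathcal{C}(B,C)$ as a right $\mathcal{C}$-module, so, using the natural isomorphism $\mathcal{C}(-,A)\otimes_{\mathcal{C}}Y\simeq Y(A)$ and the $K$-bilinearity of the tensor products,
$$(P\boxtimes_{\mathcal{C}}Y)(C)=P(-,C)\otimes_{\mathcal{C}}Y\simeq Y(A)\otimes_{K}\mathcal{C}(B,C),$$
naturally in $C$; hence $P\boxtimes_{\mathcal{C}}Y\simeq Y(A)\otimes_{K}\mathcal{C}(B,-)$, which is a free, and in particular projective, $\mathcal{C}$-module by \cite[Corollary 11.7]{Mitchelring}. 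For a general projective $P$, writing $P\oplus Q=\bigoplus_{i}\mathcal{C}^{e}\big((A_{i},B_{i}),-\big)$ and using that $-\boxtimes_{\mathcal{C}}Y$ is additive and commutes with arbitrary direct sums (it is defined objectwise through the right exact functor $-\otimes_{\mathcal{C}}Y$), one sees that $P\boxtimes_{\mathcal{C}}Y$ is a direct summand of $\bigoplus_{i}\big(\mathcal{C}^{e}((A_{i},B_{i}),-)\boxtimes_{\mathcal{C}}Y\big)$, a direct sum of projective $\mathcal{C}$-modules, hence projective.

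Applying this claim termwise, $\mathbb{F}(P^{\bullet},Y^{\bullet})$ is then a bounded complex of projective $\mathcal{C}$-modules, so its image in $\mathbf{D}^{b}(\mathrm{Mod}(\mathcal{C}))$ lies in $\mathrm{Perf}(\mathrm{Mod}(\mathcal{C}))$ by definition, and since $L_{I}^{-}\mathbb{F}(M,Y^{\bullet})$ is isomorphic to it the lemma follows. It is exactly in the projectivity claim that the hypothesis that $K$ is a field is used (so that $\mathcal{C}(B,C)$ is $K$-free and the tensor identities above are valid), together with the explicit description of the projectives of $\mathrm{Mod}(\mathcal{C}^{e})$ from Lemma \ref{proyectivoizqder}. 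An alternative that avoids the double-complex bookkeeping is to induct on the length of $Y^{\bullet}$: for length $0$ one has $L_{I}^{-}\mathbb{F}(M,Y[k])\simeq (P^{\bullet}\boxtimes_{\mathcal{C}}Y)[k]$, a bounded complex of projectives by the claim, and for the inductive step one applies the triangulated functor $L_{I}^{-}\mathbb{F}(M,-)$ to the triangle of Lemma \ref{triangululocan} and uses that $\mathrm{Perf}(\mathrm{Mod}(\mathcal{C}))$ is thick, as in the proof of Lemma \ref{Lmandaperf}; the projectivity claim is needed in either route.
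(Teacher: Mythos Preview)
Your proposal is correct and follows essentially the same route as the paper: choose a finite projective resolution $P^{\bullet}$ of $M$ in $\mathrm{Mod}(\mathcal{C}^{e})$, identify $L_{I}^{-}\mathbb{F}(M,Y^{\bullet})$ with the total complex $P^{\bullet}\boxtimes_{\mathcal{C}}Y^{\bullet}$, and show that each term is projective in $\mathrm{Mod}(\mathcal{C})$. The only difference is in the justification of the projectivity step: the paper invokes \cite[Proposition 11.6(i)]{Mitchelring} directly (using that $Y^{q}(C)$ is $K$-projective since $K$ is a field), whereas you unpack the argument by reducing to representable $\mathcal{C}^{e}$-projectives and computing $P\boxtimes_{\mathcal{C}}Y\simeq Y(A)\otimes_{K}\mathcal{C}(B,-)$ explicitly before citing \cite[Corollary 11.7]{Mitchelring}; these are the same idea at different levels of detail.
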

\begin{proof}
Let $P^{\bullet}\longrightarrow M$ be a projective resolution of $M$ in  $\mathrm{Mod}(\mathcal{C}^{e})$, that is, we have the exact sequence 
$$\xymatrix{0\ar[r] & P^{n}\ar[r] & P^{n-1}\ar[r] & \cdots\ar[r] & P^{1}\ar[r] & P^{0}\ar[r] & M\ar[r] & 0}$$
where each $P^{i}\in \mathrm{Proj}(\mathrm{Mod}(\mathcal{C}^{e}))$.
By definition we have that

$$L_{I}^{-}\mathbb{F}(M,Y^{\bullet})=P^{\bullet}\boxtimes_{\mathcal{C}}Y^{\bullet}.$$
where $(P^{\bullet}\boxtimes_{\mathcal{C}}Y^{\bullet})^{i}:=\bigoplus_{p+q=i}P^{p}\boxtimes_{\mathcal{C}}Y^{q}$.
Since $P^{p}$ is projective in $\mathrm{Mod}(\mathcal{C}^{e})$ and $Y^{q}(C)$ is projective in $\mathrm{Mod}(K)$ for all $C\in \mathcal{C}$, since $K$ is a field (recall $Y^{q}:\mathcal{C}\longrightarrow \mathrm{Mod}(K)$). By Proposition 11.6 (i) in \cite{Mitchelring}, we have that $P^{p}\boxtimes_{\mathcal{C}}Y^{q}$ is projective in $\mathrm{Mod}(\mathcal{C})$ and hence  $(P^{\bullet}\boxtimes_{\mathcal{C}}Y^{\bullet})^{i}:=\bigoplus_{p+q=i}P^{p}\boxtimes_{\mathcal{C}}Y^{q}$ is projective in $\mathrm{Mod}(\mathcal{C})$. Now, since $Y^{\bullet}$ and $P^{\bullet}$ are bounded complexes we have that $P^{\bullet}\boxtimes_{\mathcal{C}}Y^{\bullet}$ is a bounded complex and hence  $L_{I}^{-}\mathbb{F}(M,Y^{\bullet})\in \mathrm{Perf}(\mathrm{Mod}(\mathcal{C}))$.
\end{proof}

\begin{lemma}\label{pirestricperf}
Let $\mathcal{C}$ be a $k$-category and $I$ be a strongly idempotent ideal which has a finite projective dimension in $\mathrm{Mod}(\mathcal{C}^{e})$. Then the derived functor
$$L(\pi_{\ast})=\pi_{\ast}: \mathbf{D}^{b}(\mathrm{Mod}(\mathcal{C}/I))\longrightarrow  \mathbf{D}^{b}(\mathrm{Mod}(\mathcal{C}))$$
sends perfect complexes into perfect complexes.
\end{lemma}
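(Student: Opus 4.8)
The plan is to exploit that $\pi_{\ast}$ is an \emph{exact} functor (exactness of sequences in $\mathrm{Mod}(\mathcal{C})$ is tested objectwise, and $\pi_{\ast}(M)(D)=M(\pi(D))$), so that $L(\pi_{\ast})$ is computed termwise; thus it suffices to show that if $P^{\bullet}$ is a bounded complex of projective $\mathcal{C}/I$-modules, then the complex of $\mathcal{C}$-modules obtained by applying $\pi_{\ast}$ in each degree lies in $\mathrm{Perf}(\mathrm{Mod}(\mathcal{C}))$. The first step is to identify $\pi_{\ast}$ on representables: for $C\in\mathcal{C}/I$ one has $\pi_{\ast}\big((\mathcal{C}/I)(C,-)\big)=\mathcal{C}(C,-)/I(C,-)=H(C,-)$. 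By the proof of Lemma \ref{CIfinitedimension}, a \emph{single} finite projective resolution $P^{\bullet}\to I$ in $\mathrm{Mod}(\mathcal{C}^{e})$ yields, upon evaluation at $(C,-)$ and splicing with $\mathcal{C}(C,-)$, a projective resolution of $\mathcal{C}(C,-)/I(C,-)$ of length at most $m:=\mathrm{pd}_{\mathrm{Mod}(\mathcal{C}^{e})}(I)+1$, and this bound $m$ does not depend on $C$.

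Next I would upgrade this from representables to arbitrary projective $\mathcal{C}/I$-modules. Every projective object of $\mathrm{Mod}(\mathcal{C}/I)$ is a direct summand of a coproduct $\bigoplus_{i}(\mathcal{C}/I)(C_{i},-)$; since $\pi_{\ast}$ commutes with coproducts (coproducts in functor categories are computed objectwise) and obviously with direct summands, and since the projective dimension of a coproduct of modules is bounded by the supremum of their projective dimensions, it follows that $\pi_{\ast}(P)$ has projective dimension at most $m$ in $\mathrm{Mod}(\mathcal{C})$ for \emph{every} projective $P\in\mathrm{Mod}(\mathcal{C}/I)$.

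Finally, I would show that a bounded complex $X^{\bullet}$ of $\mathcal{C}$-modules whose terms all have projective dimension at most $m$ is a perfect complex. This is proved by induction on the length $b-a$ of $X^{\bullet}$, exactly as in Lemma \ref{Lmandaperf}: when the complex is concentrated in a single degree it is a finite–projective–dimension module (shifted), hence quasi-isomorphic to its finite projective resolution and so perfect; in the inductive step one applies the stupid-truncation triangle of Lemma \ref{triangululocan}
$$\xymatrix{\sigma^{>a}(X^{\bullet})\ar[r] & X^{\bullet}\ar[r] & X^{a}[-a]\ar[r] & \sigma^{>a}(X^{\bullet})[1]}$$
in which $\sigma^{>a}(X^{\bullet})$ is shorter (hence perfect by induction) and $X^{a}[-a]$ is perfect, and concludes using that $\mathrm{Perf}(\mathrm{Mod}(\mathcal{C}))$ is a thick triangulated subcategory of $\mathbf{D}^{b}(\mathrm{Mod}(\mathcal{C}))$. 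Applying this to $X^{\bullet}=\pi_{\ast}(P^{\bullet})$ finishes the argument.

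The main obstacle is the middle step, namely the passage through \emph{arbitrary} (not merely finitely generated) projective $\mathcal{C}/I$-modules: one must be certain that the projective dimension does not increase under the possibly infinite coproducts occurring there, and this is precisely why it matters that Lemma \ref{CIfinitedimension} furnishes a bound $m$ that is uniform in $C$, coming from one fixed finite $\mathcal{C}^{e}$-projective resolution of $I$. Everything else is a routine unwinding of definitions together with the truncation induction already used elsewhere in this section.
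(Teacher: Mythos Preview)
Your proposal is correct and follows essentially the same route as the paper: both arguments use a single finite $\mathcal{C}^{e}$-projective resolution of $I$ (via Lemma \ref{CIfinitedimension}) to obtain a uniform bound on $\mathrm{pd}_{\mathrm{Mod}(\mathcal{C})}\pi_{\ast}(P)$ for every projective $\mathcal{C}/I$-module $P$, and then run the stupid-truncation induction of Lemma \ref{triangululocan} to handle bounded complexes. The only cosmetic difference is that the paper explicitly writes down the resolution of the coproduct $\bigoplus_{i}\mathcal{C}(C_{i},-)/I(C_{i},-)$ and invokes thickness of $\mathrm{Perf}$ for the direct-summand step, while you phrase the same content as ``projective dimension is stable under coproducts and summands''.
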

\begin{proof}
Consider the exact sequence in $\mathrm{Mod}(\mathcal{C}^{e})$:
$$\xymatrix{0\ar[r] & I\ar[r] & \mathcal{C}\ar[r] & H\ar[r] & 0}$$
where  $H:=(\mathcal{C}/I)\circ (\pi^{op}\otimes \pi)\in  \mathrm{Mod}(\mathcal{C}^{e})$.\\
Suppose that $P^{\bullet}$ is a  finite projective resolution of $I$ in $\mathrm{Mod}(\mathcal{C}^{e})$:
$$\scalebox{0.9}{\xymatrix{P^{\bullet}: 0\ar[r] & P_{n}(-,-)\ar[r] & \cdots\ar[r] & P_{1}(-,-)\ar[r] & P_{0}(-,-)\ar[r] & I(-,-)\ar[r] & 0}}.$$
By Lemma \ref{CIfinitedimension},  for each $C\in \mathcal{C}$ we have the projective resolution of $\mathcal{C}(C,-)/I(C,-)$: 
$$\scalebox{0.9}{\xymatrix{0\ar[r] & P_{n}(C,-)\ar[r] & \cdots\ar[r] & P_{0}(C,-)\ar[r] &  \mathcal{C}(C,-)\ar[r] &\mathcal{C}(C,-)/I(C,-)\ar[r] & 0 }}.$$
Hence, we have the following exact sequence
$$\scalebox{0.8}{\xymatrix{0\ar[r] & \bigoplus_{i\in I}P_{n}(C_{i},-)\ar[r] & \cdots\ar[r] & \bigoplus_{i\in I}P_{0}(C_{i},-)\ar[r] &  \bigoplus_{i\in I}\mathcal{C}(C_{i},-)\ar[r] & \bigoplus_{i\in I}\frac{\mathcal{C}(C_{i},-)}{I(C_{i},-)}\ar[r] & 0 }}$$
for every set $I$ (each $C_{i}$ can be repeated several times).\\
Let $Q^{\bullet}$ be the complex
$$\scalebox{1}{\xymatrix{0\ar[r] & \bigoplus_{i\in I}P_{n}(C_{i},-)\ar[r] & \cdots\ar[r] & \bigoplus_{i\in I}P_{0}(C_{i},-)\ar[r] &  \bigoplus_{i\in I}\mathcal{C}(C_{i},-) }}$$
Hence, $\bigoplus_{i\in I}\frac{\mathcal{C}(C_{i},-)}{I(C_{i},-)}$ is quasi-isomorphic to $Q^{\bullet}$. 
Consider the funtor
$$\pi_{\ast}: \mathbf{D}^{b}(\mathrm{Mod}(\mathcal{C}/I))\longrightarrow  \mathbf{D}^{b}(\mathrm{Mod}(\mathcal{C})).$$
Thus, we have that
$$\pi_{\ast}\Big(\bigoplus_{i\in  I}(\mathcal{C}/I)(C_{i},-)\Big)=\bigoplus_{i\in I}\frac{\mathcal{C}(C_{i},-)}{I(C_{i},-)}$$  is perfect in $\mathbf{D}^{b}(\mathrm{Mod}(\mathcal{C}))$.\\
 Let $P$ be an arbitrary projective object in $\mathrm{Mod}(\mathcal{C}/I)$, hence $P$ is a direct summand of $\bigoplus_{i\in  I}(\mathcal{C}/I)(C_{i},-)$  for some set $I$. Since $\bigoplus_{i\in I}\frac{\mathcal{C}(C_{i},-)}{I(C_{i},-)}$ is perfect we conclude that $\pi_{\ast}(P)$ is a direct summand of a perfect complex and hence $\pi_{\ast}(P)$ is a perfect complex in  $\mathbf{D}^{b}(\mathrm{Mod}(\mathcal{C}))$ ($\mathrm{Perf}(\mathrm{Mod}(\mathcal{C}))$ is a thick triangulated subcategory of $\mathbf{D}^{b}(\mathrm{Mod}(\mathcal{C}))$).\\
 Now, we will show that if $P^{\bullet}$ is a perfect complex in $\mathbf{D}^{b}(\mathrm{Mod}(\mathcal{C}/I))$, then 
 $\pi_{\ast}(P^{\bullet})$ is a perfect complex in $\mathbf{D}^{b}(\mathrm{Mod}(\mathcal{C}))$.\\
 Let $P^{\bullet}$ be a  perfect complex in  $\mathbf{D}^{b}(\mathrm{Mod}(\mathcal{C}/I))$
 $$\xymatrix{P^{\bullet}:\quad \ar[r] & 0\ar[r]  & P^{a}\ar[r] & P^{a+1}\ar[r] & \cdots \ar[r] & P^{b}\ar[r]  & 0\ar[r] & \cdots}$$ 
 The proof is by induction on the length $n=b-a$. If $n=0$, then $P^{\bullet}=P[k]$ for some projective object $P\in \mathrm{Mod}(\mathcal{C}/I)$ and $k\in\mathbb{Z}$. Hence
 $$\pi_{\ast}(P^{\bullet})=\pi_{\ast}(P[k])=\pi_{\ast}(P)[k]$$
 is a perfect complex in $\mathbf{D}^{b}(\mathrm{Mod}(\mathcal{C}/I))$ by the above discussion.\\
 Consider $P^{\bullet}$ with length  $n=b-a>1$ and its stupid truncation $\sigma^{>a}(P^{\bullet})$. By Lemma \ref{triangululocan}, we have  a triangle in the derived category $\mathbf{D}^{b}(\mathrm{Mod}(\mathcal{C}/I))$:
$$\xymatrix{\sigma^{>a}(P^{\bullet})\ar[r] &  P^{\bullet}\ar[r] & P^{a}[-a]\ar[r] & \sigma^{>a}(P^{\bullet})[1]}$$
where $\sigma^{>a}(P^{\bullet})$ is a perfect complex  with length $b-(a+1)=n-1$ and $P^{a}[-a]$ is a perfect with length $0$. Since $\pi_{\ast}$ is triangulated functor, we have the triangle in $\mathbf{D}^{b}(\mathrm{Mod}(\mathcal{C}))$:
$$\xymatrix{\pi_{\ast}(\sigma^{>a}(P^{\bullet}))\ar[r] &  \pi_{\ast}(P^{\bullet})\ar[r] & \pi_{\ast}(P^{a}[-a])\ar[r] & \pi_{\ast}(\sigma^{>a}(P^{\bullet}))[1]}$$
By induction hypothesis we have that $\pi_{\ast}(\sigma^{>a}(P^{\bullet})), \pi_{\ast}(P^{a}[-a])\in \mathrm{Perf}(\mathrm{Mod}(\mathcal{C}))$ and  since $\mathrm{Perf}(\mathrm{Mod}(\mathcal{C}))$ is a triangulated subcategory we conclude that $ \pi_{\ast}(P^{\bullet})\in \mathrm{Perf}(\mathrm{Mod}(\mathcal{C}))$.
\end{proof}

\begin{lemma}$\textnormal{\cite[Lemma 1.2]{Orlov1}}$\label{lema2.2Xiao}
Let $F:\mathcal{T}\longrightarrow \mathcal{T}'$ be a triangulated functor which has a right adjoint $G$. Assume that $\mathcal{N}\subseteq \mathcal{T}$ and $\mathcal{N}'\subseteq \mathcal{T}'$ are triangulated subcategories satisfying that $F(\mathcal{N})\subseteq \mathcal{N}'$ and $G(\mathcal{N}')\subseteq \mathcal{N}$. Then, the induced functor $\overline{F}:\mathcal{T}/\mathcal{N}\longrightarrow \mathcal{T}'/\mathcal{N}'$ has a right adjoint  $\overline{G}:\mathcal{T}'/\mathcal{N}'\longrightarrow \mathcal{T}/\mathcal{N}$. Moreover, if $G$ is full and faithfull, so is $\overline{G}$.
\end{lemma}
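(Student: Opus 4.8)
The statement is a standard fact about Verdier quotients, essentially the adjoint version of the observation that a triangulated functor respecting a thick (or merely triangulated) subcategory descends to the quotient. The plan is to first record that $F$ and $G$ do descend. Since $F(\mathcal{N})\subseteq \mathcal{N}'$, the composite $\mathcal{T}\xrightarrow{F}\mathcal{T}'\to \mathcal{T}'/\mathcal{N}'$ kills $\mathcal{N}$, hence by the universal property of the Verdier localization $\mathcal{T}\to\mathcal{T}/\mathcal{N}$ it factors uniquely through a triangulated functor $\overline{F}:\mathcal{T}/\mathcal{N}\to\mathcal{T}'/\mathcal{N}'$; symmetrically $G(\mathcal{N}')\subseteq\mathcal{N}$ gives $\overline{G}:\mathcal{T}'/\mathcal{N}'\to\mathcal{T}/\mathcal{N}$. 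Both squares with the localization functors $Q:\mathcal{T}\to\mathcal{T}/\mathcal{N}$ and $Q':\mathcal{T}'\to\mathcal{T}'/\mathcal{N}'$ commute by construction: $\overline{F}Q=Q'F$ and $\overline{G}Q'=QG$.

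Next I would produce the unit and counit of the purported adjunction $\overline{F}\dashv\overline{G}$ by pushing down the unit $\eta:\mathrm{id}_{\mathcal{T}}\Rightarrow GF$ and counit $\varepsilon:FG\Rightarrow \mathrm{id}_{\mathcal{T}'}$ of $F\dashv G$ along $Q$ and $Q'$. Concretely, for $X\in\mathcal{T}$ the morphism $Q(\eta_X):QX\to QGFX=\overline{G}\,\overline{F}(QX)$ is natural in $X$, and since every object of $\mathcal{T}/\mathcal{N}$ is of the form $QX$ this defines a natural transformation $\overline{\eta}:\mathrm{id}\Rightarrow\overline{G}\,\overline{F}$; one must check it is well-defined on morphisms of the quotient, which follows because $Q$ is a localization and $\eta$ is natural on all of $\mathcal{T}$. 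Dually one gets $\overline{\varepsilon}:\overline{F}\,\overline{G}\Rightarrow\mathrm{id}$. The triangle identities for $(\overline{\eta},\overline{\varepsilon})$ then follow by applying $Q$ (resp. $Q'$) to the triangle identities for $(\eta,\varepsilon)$, again using that $Q,Q'$ are full localizations so that the identities, which hold after applying $Q$/$Q'$ to representatives, hold in the quotients. Alternatively, and perhaps cleaner to write, one can verify the adjunction isomorphism $\mathrm{Hom}_{\mathcal{T}'/\mathcal{N}'}(\overline{F}(QX),QY')\cong \mathrm{Hom}_{\mathcal{T}/\mathcal{N}}(QX,\overline{G}(QY'))$ directly using the calculus of fractions description of morphisms in the Verdier quotient: a morphism $\overline{F}(QX)\to QY'$ is a roof $\overline{F}(QX)\leftarrow Z'\to Y'$ with cone of the left leg in $\mathcal{N}'$, and one transports it across $F\dashv G$ while checking that $G$ sends the relevant cones into $\mathcal{N}$ — which is exactly the hypothesis $G(\mathcal{N}')\subseteq\mathcal{N}$.

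For the final assertion, suppose $G$ is fully faithful, equivalently the counit $\varepsilon:FG\Rightarrow\mathrm{id}_{\mathcal{T}'}$ is an isomorphism. Then $\overline{\varepsilon}=Q'(\varepsilon)$ applied to objects $QY'$ is an isomorphism in $\mathcal{T}'/\mathcal{N}'$ because $Q'$ carries isomorphisms to isomorphisms; hence the counit of $\overline{F}\dashv\overline{G}$ is a natural isomorphism, which is equivalent to $\overline{G}$ being fully faithful.

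\textbf{Main obstacle.} The only genuine subtlety is the well-definedness of the descended natural transformations (equivalently, of the transported roofs) on the morphism level of the quotient categories: one has to be careful that the hypotheses $F(\mathcal{N})\subseteq\mathcal{N}'$ and $G(\mathcal{N}')\subseteq\mathcal{N}$ are used precisely where the calculus of fractions requires a cone to stay inside the localizing subcategory. Since this is Orlov's Lemma 1.2 in \cite{Orlov1}, I would in fact just cite it; the sketch above indicates how to reprove it if a self-contained argument is wanted.
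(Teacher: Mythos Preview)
Your proposal is correct and in fact exceeds what the paper does: the paper's own ``proof'' is simply the sentence ``For a proof see \cite[Lemma 2.2]{Xiao}'', so your final remark that one should just cite the result matches the paper's approach exactly, while your sketch via descended unit/counit (or equivalently via the calculus of fractions) is a perfectly valid outline of the standard argument should a self-contained proof be desired.
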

\begin{proof}
For a proof see \cite[Lemma 2.2]{Xiao}.
\end{proof}

\begin{lemma}\label{lema2.1Xiao}
Let $F:\mathcal{T}\longrightarrow \mathcal{T}'$ be a triangulated functor which admits a full and faithful right adjoint $G$. Then $F$ induces a triangle equivalence $\mathcal{T}/\mathrm{Ker}(F)\cong \mathcal{T}'$.
\end{lemma}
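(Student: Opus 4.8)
The statement is the standard fact that a triangulated functor with a fully faithful right adjoint factors through its kernel to give an equivalence onto the target. I would prove it in two stages: first upgrade $G$ fully faithful to the counit $\varepsilon\colon FG\Rightarrow \mathrm{id}_{\mathcal{T}'}$ being an isomorphism, and then show the induced functor $\overline{F}\colon \mathcal{T}/\mathrm{Ker}(F)\to\mathcal{T}'$ is an equivalence by exhibiting a quasi-inverse built from $G$.

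\textbf{Step 1: the counit is invertible.} Recall that for an adjunction $F\dashv' $... more precisely $G$ is right adjoint to $F$, so the unit is $\eta\colon\mathrm{id}_{\mathcal{T}}\Rightarrow GF$ and the counit is $\varepsilon\colon FG\Rightarrow\mathrm{id}_{\mathcal{T}'}$. It is a classical fact that $G$ is fully faithful if and only if $\varepsilon$ is a natural isomorphism; I would quote or quickly reprove this via the triangle identities, since $G$ fully faithful means $G\colon\mathcal{T}'(X,Y)\to\mathcal{T}(GX,GY)$ is bijective, and $\mathcal{T}(GX,GY)\cong\mathcal{T}'(FGX,Y)$ by adjunction, with the composite being post-composition by $\varepsilon_Y$; bijectivity for all $X$ forces $\varepsilon_Y$ to be an isomorphism by Yoneda.

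\textbf{Step 2: factor and build the quasi-inverse.} By Lemma \ref{lema2.1Xiao}'s hypotheses $F$ is triangulated with kernel $\mathcal{N}:=\mathrm{Ker}(F)$ a thick triangulated subcategory, so $F$ factors uniquely as $F=\overline{F}\circ Q$ where $Q\colon\mathcal{T}\to\mathcal{T}/\mathcal{N}$ is the Verdier quotient and $\overline{F}\colon\mathcal{T}/\mathcal{N}\to\mathcal{T}'$ is triangulated (this uses the universal property of the Verdier quotient: $F$ kills $\mathcal{N}$, hence sends quasi-isomorphisms with cone in $\mathcal{N}$ to isomorphisms). I then claim $\overline{F}$ is an equivalence. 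For essential surjectivity: given $X'\in\mathcal{T}'$, we have $X'\cong FG(X')=\overline{F}(Q(G(X')))$ by Step 1. For fullness and faithfulness I would show $\overline{F}$ is fully faithful directly on morphisms in the quotient category. A morphism $Q(X)\to Q(Y)$ in $\mathcal{T}/\mathcal{N}$ is a roof $X\xleftarrow{s}Z\xrightarrow{f}Y$ with $\mathrm{cone}(s)\in\mathcal{N}$; applying $F$ sends $s$ to an isomorphism, giving $F(f)F(s)^{-1}\colon FX\to FY$, and one checks this assignment is well-defined, additive, and compatible with composition. Injectivity: if $F(f)F(s)^{-1}=0$ then $F(f)=0$, so $f$ factors through an object of $\mathcal{N}$ (using the triangle on $f$ and that $F$ kills its cone-related term), hence the roof is zero in the quotient. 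Surjectivity of $\overline{F}$ on $\mathrm{Hom}$: given $g\colon FX\to FY$, transport it via $\varepsilon$ and $\eta$ — concretely $G(g)\colon GFX\to GFY$ fits into a commuting square with $\eta_X,\eta_Y$ whose cones lie in $\mathcal{N}$ (because $F$ applied to $\eta_X$ is a split mono/iso up to $\mathcal{N}$, as $F\eta_X$ is a section of $\varepsilon_{FX}$ which is invertible), so $\eta_Y^{-1}\circ G(g)\circ(\text{roof for }\eta_X)$ gives a morphism $Q(X)\to Q(Y)$ mapping to $g$.

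\textbf{The main obstacle.} The routine-looking but genuinely delicate point is verifying that the assignment on roofs $X\xleftarrow{s}Z\xrightarrow{f}Y\mapsto F(f)F(s)^{-1}$ is \emph{well-defined} — that equivalent roofs give the same morphism — and that $\overline{F}$ is full, i.e. every $g\colon FX\to FY$ arises this way. Both hinge on controlling the interaction between $\eta$, $\varepsilon$, and which morphisms have cones in $\mathcal{N}$; the clean way to package this is to first establish (Step 1) that $\varepsilon$ is invertible, then observe $F\eta_X$ is an isomorphism for every $X$ (being a retract, via the triangle identity $\varepsilon_{FX}\circ F\eta_X=\mathrm{id}_{FX}$, of something whose other summand maps to zero under... ) — in fact since $\varepsilon_{FX}$ is invertible, $F\eta_X=\varepsilon_{FX}^{-1}$ is an isomorphism, so $\mathrm{cone}(\eta_X)\in\mathrm{Ker}(F)=\mathcal{N}$, meaning $\eta_X$ itself becomes an isomorphism in $\mathcal{T}/\mathcal{N}$. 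With this in hand, $Q\circ G$ composed appropriately with the $Q(\eta)$'s is a genuine quasi-inverse to $\overline{F}$, and the whole argument collapses to checking the two triangle identities descend to the quotient, which is formal. I would present the proof along these lines rather than the roof-by-roof computation.
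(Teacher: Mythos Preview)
Your argument is correct. The key observation --- that the triangle identity $\varepsilon_{FX}\circ F\eta_X=\mathrm{id}_{FX}$ together with invertibility of $\varepsilon$ forces $F\eta_X$ to be an isomorphism, hence $\mathrm{cone}(\eta_X)\in\mathrm{Ker}(F)$ and $Q(\eta_X)$ is invertible in the quotient --- is exactly what is needed, and once you have it the functor $Q\circ G$ is a genuine quasi-inverse to $\overline{F}$. Your initial ``roof-by-roof'' discussion is unnecessary and a bit muddled, but you correctly discard it in favour of the clean argument at the end.

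As for comparison with the paper: the paper does not prove this lemma at all; it simply refers the reader to \cite[Lemma~2.1]{Xiao}. So you have supplied a self-contained proof where the paper gives only a citation. The argument you give is essentially the standard one (and indeed is the argument in Chen's cited paper): reduce to $\varepsilon$ invertible, deduce that $\eta$ becomes invertible after passing to the Verdier quotient by $\mathrm{Ker}(F)$, and conclude that $Q\circ G$ is a quasi-inverse to $\overline{F}$. One small point you might make explicit is that $\mathrm{Ker}(F)$ is thick (closed under direct summands), which is needed for the Verdier quotient to behave well; this is immediate since a summand of an object with $FX\cong 0$ also maps to a summand of $0$.
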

\begin{proof}
For a proof see \cite[Lemma 2.1]{Xiao} 
\end{proof}


\begin{theorem}\label{teoremaprincipal}
Let $\mathcal{C}$ be a $K$-category and $I$ be a strongly idempotent ideal which has a finite projective dimension in $\mathrm{Mod}(\mathcal{C}^{e})$. Then there exists a singular equivalence between $\mathcal{C}$ and $\mathcal{C}/I$.
\end{theorem}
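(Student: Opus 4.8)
The plan is to show that the restriction functor $\pi_{\ast}$ induces a triangle equivalence $\overline{\pi_{\ast}}\colon \mathbf{D}_{sg}(\mathrm{Mod}(\mathcal{C}/I))\longrightarrow\mathbf{D}_{sg}(\mathrm{Mod}(\mathcal{C}))$, whose quasi-inverse is induced by $\mathcal{C}/I\otimes^{L}_{\mathcal{C}}-$. First I would pin down both functors at the level of bounded derived categories. Since $\pi_{\ast}\colon\mathrm{Mod}(\mathcal{C}/I)\to\mathrm{Mod}(\mathcal{C})$ is exact it descends verbatim to $\pi_{\ast}\colon\mathbf{D}^{b}(\mathrm{Mod}(\mathcal{C}/I))\to\mathbf{D}^{b}(\mathrm{Mod}(\mathcal{C}))$, and it is full and faithful by Proposition \ref{caractidem2}(g) because $I$ is strongly idempotent. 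Its left adjoint $\pi^{\ast}=\mathcal{C}/I\otimes_{\mathcal{C}}-=H_{1}\boxtimes_{\mathcal{C}}-$ (Remark \ref{2bifuntores}) has finite left cohomological dimension by Lemma \ref{tensorfiniecomo}, which is precisely where $\mathrm{pd}_{\mathcal{C}^{e}}(I)<\infty$ is used, so the derived bifunctor of Definition \ref{unbifuntor} restricts to $L\pi^{\ast}=\mathcal{C}/I\otimes^{L}_{\mathcal{C}}-\colon\mathbf{D}^{b}(\mathrm{Mod}(\mathcal{C}))\to\mathbf{D}^{b}(\mathrm{Mod}(\mathcal{C}/I))$. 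Applying Lemma \ref{lema1Cline-Parshall} to the adjoint pair $(\pi^{\ast},\pi_{\ast})$ at the module level (using that $\pi_{\ast}$ is exact, hence of finite right cohomological dimension, that $\pi^{\ast}$ has finite left cohomological dimension, and that the functor categories involved have enough projectives and injectives) then gives that $(L\pi^{\ast},\pi_{\ast})$ is an adjoint pair of triangulated functors on the bounded derived categories.

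Next I would descend to the singularity categories. The functor $\pi^{\ast}=\mathcal{C}/I\otimes_{\mathcal{C}}-$ is a left adjoint and therefore preserves projectives, so $L\pi^{\ast}$ carries $\mathrm{Perf}(\mathrm{Mod}(\mathcal{C}))$ into $\mathrm{Perf}(\mathrm{Mod}(\mathcal{C}/I))$ by Lemma \ref{Lmandaperf}; and $\pi_{\ast}$ carries $\mathrm{Perf}(\mathrm{Mod}(\mathcal{C}/I))$ into $\mathrm{Perf}(\mathrm{Mod}(\mathcal{C}))$ by Lemma \ref{pirestricperf}. Hence Lemma \ref{lema2.2Xiao} applies to the adjoint pair $(L\pi^{\ast},\pi_{\ast})$ with $\mathcal{N}=\mathrm{Perf}(\mathrm{Mod}(\mathcal{C}))$ and $\mathcal{N}'=\mathrm{Perf}(\mathrm{Mod}(\mathcal{C}/I))$: the induced functors $\overline{L\pi^{\ast}}\colon\mathbf{D}_{sg}(\mathrm{Mod}(\mathcal{C}))\to\mathbf{D}_{sg}(\mathrm{Mod}(\mathcal{C}/I))$ and $\overline{\pi_{\ast}}\colon\mathbf{D}_{sg}(\mathrm{Mod}(\mathcal{C}/I))\to\mathbf{D}_{sg}(\mathrm{Mod}(\mathcal{C}))$ form an adjoint pair, and since $\pi_{\ast}$ is full and faithful on $\mathbf{D}^{b}$, so is $\overline{\pi_{\ast}}$ on the singularity categories.

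It then remains to prove that $\overline{\pi_{\ast}}$ is dense, since a full, faithful and dense triangulated functor is a triangle equivalence. For this I would use the canonical exact sequence \eqref{succanonica}, $0\to I\to\mathcal{C}\to H\to 0$ in $\mathrm{Mod}(\mathcal{C}^{e})$ with $H=(\mathcal{C}/I)\circ(\pi^{op}\otimes\pi)$. Given $X^{\bullet}\in K^{b}(\mathrm{Mod}(\mathcal{C}))$, deriving the bifunctor of Definition \ref{unbifuntor} in the first variable (Proposition \ref{funtorderivadotensor}) yields a triangle $I\otimes^{L}_{\mathcal{C}}X^{\bullet}\to\mathcal{C}\otimes^{L}_{\mathcal{C}}X^{\bullet}\to H\otimes^{L}_{\mathcal{C}}X^{\bullet}\to I\otimes^{L}_{\mathcal{C}}X^{\bullet}[1]$. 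Here $\mathcal{C}\otimes^{L}_{\mathcal{C}}X^{\bullet}\cong X^{\bullet}$, because $\mathcal{C}(-,-)\boxtimes_{\mathcal{C}}-$ is the identity functor of $\mathrm{Mod}(\mathcal{C})$; the term $I\otimes^{L}_{\mathcal{C}}X^{\bullet}$ lies in $\mathrm{Perf}(\mathrm{Mod}(\mathcal{C}))$ by Lemma \ref{perfectideal} (again using $\mathrm{pd}_{\mathcal{C}^{e}}(I)<\infty$); and by Corollary \ref{compodosfuntor} we have $H\otimes^{L}_{\mathcal{C}}X^{\bullet}\cong\pi_{\ast}\bigl(\mathcal{C}/I\otimes^{L}_{\mathcal{C}}X^{\bullet}\bigr)$, where $\mathcal{C}/I\otimes^{L}_{\mathcal{C}}X^{\bullet}\in\mathbf{D}^{b}(\mathrm{Mod}(\mathcal{C}/I))$ by Lemma \ref{tensorfiniecomo}. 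Passing to $\mathbf{D}_{sg}(\mathrm{Mod}(\mathcal{C}))$, the triangle gives an isomorphism $X^{\bullet}\cong\overline{\pi_{\ast}}\bigl(\mathcal{C}/I\otimes^{L}_{\mathcal{C}}X^{\bullet}\bigr)$, so $\overline{\pi_{\ast}}$ is dense, hence a triangle equivalence, and $\mathcal{C}$ and $\mathcal{C}/I$ are singularly equivalent. (Alternatively, once the adjunction $(\overline{L\pi^{\ast}},\overline{\pi_{\ast}})$ with $\overline{\pi_{\ast}}$ full and faithful is available, one may invoke Lemma \ref{lema2.1Xiao} and use the same triangle to check $\mathrm{Ker}(\overline{L\pi^{\ast}})=0$.)

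I expect the main obstacles to be two bookkeeping points rather than conceptual ones. The first is verifying all hypotheses of Lemma \ref{lema1Cline-Parshall} carefully, so that the module-level adjunction $\pi^{\ast}\dashv\pi_{\ast}$ genuinely transports to an adjunction $L\pi^{\ast}\dashv\pi_{\ast}$ between bounded derived categories. The second, in the density step, is the precise identification of the derived bifunctor $H\otimes^{L}_{\mathcal{C}}-$ with $\pi_{\ast}\circ(\mathcal{C}/I\otimes^{L}_{\mathcal{C}}-)$ via Corollary \ref{compodosfuntor}, together with keeping track of boundedness: the hypothesis $\mathrm{pd}_{\mathcal{C}^{e}}(I)<\infty$ is what simultaneously makes $I\otimes^{L}_{\mathcal{C}}X^{\bullet}$ perfect and $\mathcal{C}/I\otimes^{L}_{\mathcal{C}}X^{\bullet}$ bounded, so it must be invoked cleanly in both places.
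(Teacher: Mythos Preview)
Your proposal is correct and follows essentially the same route as the paper's proof: the same adjunction $(L\pi^{\ast},\pi_{\ast})$ on $\mathbf{D}^{b}$ via Lemma~\ref{lema1Cline-Parshall}, the same passage to singularity categories via Lemmas~\ref{Lmandaperf}, \ref{pirestricperf} and~\ref{lema2.2Xiao}, and the same key triangle coming from $0\to I\to\mathcal{C}\to H\to0$ together with Lemma~\ref{perfectideal} and Corollary~\ref{compodosfuntor}. The only cosmetic difference is that the paper concludes by invoking Lemma~\ref{lema2.1Xiao} and showing $\mathrm{Ker}(\overline{L\pi^{\ast}})=0$, whereas your main presentation argues directly that $\overline{\pi_{\ast}}$ is dense; you yourself note this alternative, and the two endings use the identical triangle.
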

\begin{proof}
Let $\pi:\mathcal{C}\longrightarrow \mathcal{C}/I$ be the canonical functor. Recall that we have a triple adjoint
$$\xymatrix{\mathrm{Mod}(\mathcal{C}/\mathcal{I})\ar[rr]|{\pi_{\ast}}  &  &\mathrm{Mod}(\mathcal{C})\ar@<-2ex>[ll]_{\pi^{\ast}}\ar@<2ex>[ll]^{\pi^{!}} }$$
where $(\pi^{\ast},\pi_{\ast})$ and $(\pi_{\ast},\pi^{!})$ are adjoint pairs, $\pi^{!}:=\mathcal{C}(\frac{\mathcal{C}}{\mathcal{I}},-)$ and $\pi^{\ast}:=\frac{\mathcal{C}}{\mathcal{I}}\otimes_{\mathcal{C}}$ (see for example \cite[Proposition 3.11]{RSS}).\\
Since $\pi_{\ast}$ is exact  we conclude that $\pi_{\ast}$ has finite right cohomological dimension. By Lemma  \ref{tensorfiniecomo}, we have that $\pi^{\ast}:=\frac{\mathcal{C}}{\mathcal{I}}\otimes_{\mathcal{C}}$ has finite left cohomological dimension. Hence, by Lemma \ref{lema1Cline-Parshall}, we have that the left derived funtor 
$$\mathcal{C}/I\otimes_{\mathcal{C}}^{L}-:=L_{II}^{-}\mathbb{F}(H_{1},-):\mathbf{D}^{b}(\mathrm{Mod}(\mathcal{C}))\longrightarrow \mathbf{D}^{b}(\mathrm{Mod}(\mathcal{C}/I))$$ is left adjoint to  $$\pi_{\ast}=L(\pi_{\ast}): \mathbf{D}^{b}(\mathrm{Mod}(\mathcal{C}/I))\longrightarrow  \mathbf{D}^{b}(\mathrm{Mod}(\mathcal{C})).$$
By  Lemma \ref{pirestricperf}, we have that $\pi_{\ast}$ send perfect complexes into perfect complexes and thus we obtain the induced functor
$$\overline{\pi_{\ast}}:\frac{\mathbf{D}^{b}(\mathrm{Mod}(\mathcal{C}/I))}{\mathrm{Perf}(\mathrm{Mod}(\mathcal{C}/I))}\longrightarrow \frac{\mathbf{D}^{b}(\mathrm{Mod}(\mathcal{C}))}{\mathrm{Perf}(\mathrm{Mod}(\mathcal{C}))}.$$
Now, since $\mathcal{C}/I\otimes_{\mathcal{C}}\mathcal{C}(C,-)=(\mathcal{C}/I)(C,-)$ (see Prop. 3.5 in p. 793 in \cite{LGOS2}), we conclude that $\mathcal{C}/I\otimes_{\mathcal{C}}-$ preserve projective objects. Thus, by Lemma \ref{Lmandaperf}, we get that $\mathcal{C}/I\otimes_{\mathcal{C}}^{L}-$ send perfect complexes into perfect complexes. Hence we have the induced functor
$$G=\overline{\mathcal{C}/I\otimes_{\mathcal{C}}^{L}-}:\frac{\mathbf{D}^{b}(\mathrm{Mod}(\mathcal{C}))}{\mathrm{Perf}(\mathrm{Mod}(\mathcal{C}))}\longrightarrow \frac{\mathbf{D}^{b}(\mathrm{Mod}(\mathcal{C}/I))}{\mathrm{Perf}(\mathrm{Mod}(\mathcal{C}/I))}.$$
Since $\pi:\mathcal{C}\longrightarrow \mathcal{C}/I$ is a homological epimorphism (see Proposition \ref{caractidem2}), we conclude that the functor $\pi_{\ast}: \mathbf{D}^{b}(\mathrm{Mod}(\mathcal{C}/I))\longrightarrow  \mathbf{D}^{b}(\mathrm{Mod}(\mathcal{C}))$ is full and faithful and hence by Lemma  \ref{lema2.2Xiao} we have that $\overline{\pi_{\ast}}$ is full and faithful. That is, we have an adjoint pair $\Big(\overline{\mathcal{C}/I\otimes_{\mathcal{C}}^{L}-}, \overline{\pi_{\ast}}\Big)$ where $\overline{\pi_{\ast}}$ is full and faithful. Now, by Lemma \ref{lema2.1Xiao}, we have that $G$ induces an equivalence
$$\widehat{G}:\mathbf{D}_{sg}(\mathrm{Mod}(\mathcal{C}))/\mathrm{Ker}(G)\longrightarrow \mathbf{D}_{sg}(\mathrm{Mod}(\mathcal{C}/I)).$$
Let us see that $\mathrm{Ker}(G)=0$.\\
 Indeed, let $Y^{\bullet}\in \mathbf{D}^{b}(\mathrm{Mod}(\mathcal{C}))$ such that
$G(Y^{\bullet})=\overline{\mathcal{C}/I\otimes_{\mathcal{C}}^{L}Y^{\bullet}}=0$. This implies that $\mathcal{C}/I\otimes_{\mathcal{C}}^{L}Y^{\bullet}=L_{II}^{-}\mathbb{F}(H_{1},Y^{\bullet})$ is a perfect complex in $\mathbf{D}^{b}(\mathrm{Mod}(\mathcal{C}/I))$.\\
Consider the left derived functor (see Proposition \ref{funtorderivadotensor})
 $$L_{I}^{-}\mathbb{F}(-,Y^{\bullet}):\mathbf{D}^{-}\Big(\mathrm{Mod}(\mathcal{C}^{op}\otimes_K \mathcal{C})\Big)\longrightarrow \mathbf{D}^{-}(\mathrm{Mod}(\mathcal{C})).$$
 Recall that we have the following exact sequence in $\mathrm{Mod}(\mathcal{C}^{e})=\mathrm{Mod}(\mathcal{C}^{op}\otimes_K \mathcal{C})$:
 $$\xymatrix{0\ar[r] & I\ar[r] & \mathcal{C}\ar[r] & H\ar[r] & 0}$$
 where $H=(\mathcal{C}/I)\circ (\pi^{op}\otimes \pi)\in \mathrm{Mod}(\mathcal{C}^{op}\otimes_K \mathcal{C})$. Hence, we get a triangle in $\mathbf{D}^{-}(\mathrm{Mod}(\mathcal{C}^{e}))$:
 $$\xymatrix{I\ar[r] & \mathcal{C}\ar[r] & H\ar[r] & I[1].}$$
 Thus, we obtain a triangle in $\mathbf{D}^{-}(\mathrm{Mod}(\mathcal{C}))$
  $$(\ast):\xymatrix{L_{I}^{-}\mathbb{F}(I,Y^{\bullet})\ar[r] & L_{I}^{-}\mathbb{F}(\mathcal{C},Y^{\bullet})\ar[r] & L_{I}^{-}\mathbb{F}(H,Y^{\bullet})\ar[r] & L_{I}^{-}\mathbb{F}(I,Y^{\bullet})[1]}.$$
On the other hand, by  Corollary \ref{compodosfuntor}, we have that 
$$L_{II}^{-}\mathbb{F}(H,-)=L(\pi_{\ast})\circ L_{II}^{-}\mathbb{F}(H_{1},-)=\pi_{\ast}\circ (\mathcal{C}/I\otimes_{\mathcal{C}}^{L}-) $$
where $H_{1}=(\mathcal{C}/I)\circ (\pi^{op}\otimes 1)\in \mathrm{Mod}((\mathcal{C}/I)^{op}\otimes_K \mathcal{C})$.\\
Hence, for $Y^{\bullet}\in \mathbf{D}^{b}(\mathrm{Mod}(\mathcal{C}))$, by Proposition \ref{funtorderivadotensor}(c), we have that
$$\pi_{\ast}\Big( \mathcal{C}/I\otimes_{\mathcal{C}}^{L}Y^{\bullet}\Big)=L_{II}^{-}\mathbb{F}(H,Y^{\bullet})=L^{-}\mathbb{F}(H,Y^{\bullet})=L_{I}^{-}\mathbb{F}(H,Y^{\bullet}).$$
Since $\mathcal{C}/I\otimes_{\mathcal{C}}^{L}Y^{\bullet}$ is a perfect complex and $\pi_{\ast}$ preserves perfect complexes, we get that $L_{I}^{-}\mathbb{F}(H,Y^{\bullet})$ is a perfect complex.\\
On the other hand,  by Proposition \ref{funtorderivadotensor}, we have that
 $$L_{I}^{-}\mathbb{F}(\mathcal{C},Y^{\bullet})=L_{II}^{-}\mathbb{F}(\mathcal{C},Y^{\bullet}).$$
 In order to compute $L_{II}^{-}\mathbb{F}(\mathcal{C},Y^{\bullet})$ we consider $\alpha:P^{\bullet}\longrightarrow Y^{\bullet}$ be a quasi-isomorphism in $\mathbf{K}^{-}\Big(\mathrm{Mod}(\mathcal{C})\Big)$ where $P^{\bullet}$ is a complex of projective modules. Hence, $L_{II}^{-}\mathbb{F}(\mathcal{C},Y^{\bullet})=\mathcal{C}\boxtimes_{\mathcal{C}}P^{\bullet}=P^{\bullet}$.
Thus, we have an isomorphism $L_{II}^{-}\mathbb{F}(\mathcal{C},Y^{\bullet})\simeq P^{\bullet}\simeq Y^{\bullet}$ in $D^{-}\Big(\mathrm{Mod}(\mathcal{C})\Big)$.
 Therefore,
$$L_{I}^{-}\mathbb{F}(\mathcal{C},Y^{\bullet})\simeq Y^{\bullet}.$$
By  Lemma \ref{perfectideal}, we have that  $L_{I}^{-}\mathbb{F}(I,Y^{\bullet})$ is a perfect  complex.\\
Then, we have that in the triangle $(\ast)$ the first and third term are perfect complexes, hence we conclude that the middle term $L_{I}^{-}\mathbb{F}(\mathcal{C},Y^{\bullet})\simeq Y^{\bullet}$ also is a perfect complex.\\
Hence $\mathrm{Ker}(G)=0$ and hence we have the desired equivalence.
\end{proof}

\section{Application to triangular matrix categories}
We consider the triangular matrix category  $\Lambda:=\left[\begin{smallmatrix}
\mathcal{T} & 0 \\ 
M & \mathcal{U}
\end{smallmatrix}\right]$ constructed in \cite{LeOS} and defined as follows.
\begin{definition}$\textnormal{\cite[Definition 3.5]{LeOS}}$ \label{defitrinagularmat}
Let $\mathcal{U}$ and $\mathcal{T}$ be two $K$-categories, and consider an additive $K$-functor  $M$ from the tensor product category  $ \mathcal{T}^{op}\otimes_K\mathcal{U}$ to the category $\mathrm{Mod}(K)$.
The \textbf{triangular matrix category}
$\Lambda=\left[ \begin{smallmatrix}
\mathcal{T} & 0 \\ M & \mathcal{U}
\end{smallmatrix}\right]$ is defined as below.
\begin{enumerate}
\item [(a)] The class of objects of this category are matrices $ \left[
\begin{smallmatrix}
T & 0 \\ M & U
\end{smallmatrix}\right]  $ with $ T\in \mathcal{T} $ and $ U\in \mathcal{U} $.

\item [(b)] For objects
$\left[ \begin{smallmatrix}
T & 0 \\
M & U
\end{smallmatrix} \right] ,  \left[ \begin{smallmatrix}
T' & 0 \\
M & U'
\end{smallmatrix} \right]\in\Lambda$, we define $$\Lambda\left (\left[ \begin{smallmatrix}
T & 0 \\
M & U
\end{smallmatrix} \right] ,  \left[ \begin{smallmatrix}
T' & 0 \\
M & U'
\end{smallmatrix} \right]  \right)  := \left[ \begin{smallmatrix}
\mathcal{T}(T,T') & 0 \\
M(T,U') & \mathcal{U}(U,U')
\end{smallmatrix} \right].$$
\end{enumerate}
The composition is given by
\begin{eqnarray*}
\circ&:&\left[  \begin{smallmatrix}
{\mathcal{T}}(T',T'') & 0 \\
M(T',U'') & {\mathcal{U}}(U',U'')
\end{smallmatrix}  \right] \times \left[
\begin{smallmatrix}
{\mathcal{T}}(T,T') & 0 \\
M(T,U') & {\mathcal{U}}(U,U')
\end{smallmatrix} \right]\longrightarrow\left[
\begin{smallmatrix}
{\mathcal{T}}(T,T'') & 0 \\
M(T,U'') & {\mathcal{U}}(U,U'')\end{smallmatrix} \right] \\
&& \left( \left[ \begin{smallmatrix}
t_{2} & 0 \\
m_{2} & u_{2}
\end{smallmatrix} \right], \left[
\begin{smallmatrix}
t_{1} & 0 \\
m_{1} & u_{1}
\end{smallmatrix} \right]\right)\longmapsto\left[
\begin{smallmatrix}
t_{2}\circ t_{1} & 0 \\
m_{2}\bullet t_{1}+u_{2}\bullet m_{1} & u_{2}\circ u_{1}
\end{smallmatrix} \right].
\end{eqnarray*}
\end{definition}
We recall that $ m_{2}\bullet t_{1}:=M( t_{1}^{op}\otimes 1_{U''})(m_{2})$ and
$u_{2}\bullet m_{1}=M(1_T\otimes u_{2})(m_{1})$.
Thus, $\Lambda$ is clearly a $K$-category since
$\mathcal{T} $ and $\mathcal{U}$ are $K$-categories and $M(T,U')$ is a $K$-module.\\
We define a functor $\Phi:\Lambda\longrightarrow \mathcal{U}$
as follows: $\Phi\Big(\left[\begin{smallmatrix}
 T & 0 \\ 
M & U
\end{smallmatrix}\right]\Big):=U$ and for 
 $\left[\begin{smallmatrix}
\alpha & 0 \\ 
m & \beta
\end{smallmatrix}\right]:\left[\begin{smallmatrix}
 T & 0 \\ 
M & U
\end{smallmatrix}\right]\longrightarrow \left[\begin{smallmatrix}
T' & 0 \\ 
M & U'
\end{smallmatrix}\right]$ we set $\Phi\Big(\left[\begin{smallmatrix}
\alpha & 0 \\ 
m & \beta
\end{smallmatrix}\right]\Big)=\beta$.\\
For simplicity, we will write
$\mathfrak{M}=\left[\begin{smallmatrix}
 T & 0 \\ 
M & U
\end{smallmatrix}\right]\in \Lambda$.

\begin{lemma}\label{exacseuqnimpor}
 There exists an exact sequence
in $\mathrm{Mod}(\Lambda^{e})$
$$\xymatrix{0\ar[r] & \mathcal{I}\ar[r] & \Lambda\ar[rr]^(.3){\Gamma(\Phi)} & & \mathcal{U}(-,-)\circ (\Phi^{op}\otimes\Phi)\ar[r] & 0,}$$
where for objects $\mathfrak{M}=\left[\begin{smallmatrix}
 T & 0 \\ 
M & U
\end{smallmatrix}\right]$ and $\mathfrak{M}'=\left[\begin{smallmatrix}
 T' & 0 \\ 
M & U'
\end{smallmatrix}\right]$ in $\Lambda$ the ideal $\mathcal{I}$ is given as  $\mathcal{I}\big(\mathfrak{M},\mathfrak{M}'\big)=\mathrm{Ker}\left([\Gamma (\Phi)]_{\big(\mathfrak{M},\mathfrak{M}'\big)}\right)=\left[\begin{smallmatrix}
 \mathcal{T}(T,T') & 0 \\ 
M(T,U') &  0
\end{smallmatrix}\right]$.
\end{lemma}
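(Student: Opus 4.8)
The plan is to exhibit the functor $\Gamma(\Phi)$ explicitly, identify its kernel componentwise, and check that this kernel is closed under composition on both sides (so that it is genuinely a two-sided ideal), from which the exact sequence is immediate. First I would recall that the morphism $\Gamma(\Phi)\colon\Lambda(-,-)\longrightarrow \mathcal{U}(-,-)\circ(\Phi^{op}\otimes\Phi)$ is the one induced, as in the discussion around \eqref{succanonica}, by the functor $\Phi\colon\Lambda\longrightarrow\mathcal{U}$: for objects $\mathfrak{M}=\left[\begin{smallmatrix} T & 0\\ M & U\end{smallmatrix}\right]$ and $\mathfrak{M}'=\left[\begin{smallmatrix} T' & 0\\ M & U'\end{smallmatrix}\right]$, the component
$$[\Gamma(\Phi)]_{(\mathfrak{M},\mathfrak{M}')}\colon \Lambda(\mathfrak{M},\mathfrak{M}')=\left[\begin{smallmatrix}\mathcal{T}(T,T') & 0\\ M(T,U') & \mathcal{U}(U,U')\end{smallmatrix}\right]\longrightarrow \mathcal{U}(\Phi\mathfrak{M},\Phi\mathfrak{M}')=\mathcal{U}(U,U')$$
sends $\left[\begin{smallmatrix}\alpha & 0\\ m & \beta\end{smallmatrix}\right]$ to $\Phi\!\left(\left[\begin{smallmatrix}\alpha & 0\\ m & \beta\end{smallmatrix}\right]\right)=\beta$, i.e. it is the projection onto the lower-right corner. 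This is surjective in each component (take $\alpha=0$, $m=0$), so $\Gamma(\Phi)$ is an epimorphism in $\mathrm{Mod}(\Lambda^{e})$.

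Next I would compute the kernel. From the description above, $[\Gamma(\Phi)]_{(\mathfrak{M},\mathfrak{M}')}\!\left(\left[\begin{smallmatrix}\alpha & 0\\ m & \beta\end{smallmatrix}\right]\right)=0$ precisely when $\beta=0$, so
$$\mathcal{I}(\mathfrak{M},\mathfrak{M}')=\mathrm{Ker}\!\left([\Gamma(\Phi)]_{(\mathfrak{M},\mathfrak{M}')}\right)=\left[\begin{smallmatrix}\mathcal{T}(T,T') & 0\\ M(T,U') & 0\end{smallmatrix}\right],$$
as claimed. I would then verify that $\mathcal{I}$ is a two-sided ideal in the sense of the definition in Section 3: since $\Gamma(\Phi)$ is a morphism of $\Lambda^{e}$-modules (equivalently, $\Phi$ is a $K$-functor), the family of kernels is automatically a $K$-subfunctor of $\Lambda(-,-)$ stable under pre- and post-composition by arbitrary morphisms of $\Lambda$; alternatively one checks directly from the composition rule of Definition \ref{defitrinagularmat} that composing $\left[\begin{smallmatrix}\alpha & 0\\ m & 0\end{smallmatrix}\right]$ with any $\left[\begin{smallmatrix}t & 0\\ m' & u\end{smallmatrix}\right]$ (on either side) again lands in the lower-left/upper-left part, because the lower-right entry of the composite is $u\circ 0=0$ or $0\circ u=0$. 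Hence $\mathcal{I}$ is a two-sided ideal, and the sequence
$$\xymatrix{0\ar[r] & \mathcal{I}\ar[r] & \Lambda\ar[rr]^(.3){\Gamma(\Phi)} & & \mathcal{U}(-,-)\circ(\Phi^{op}\otimes\Phi)\ar[r] & 0}$$
is exact in $\mathrm{Mod}(\Lambda^{e})$ by construction (the left map is the inclusion, the right map is $\Gamma(\Phi)$, kernel equals image at the middle term).

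I expect no serious obstacle here; the statement is essentially the specialization of the general exact sequence \eqref{succanonica} to the functor $\pi=\Phi$ (which, although not literally a quotient projection $\mathcal{C}\to\mathcal{C}/\mathcal{I}$ as stated there, induces the same kind of short exact sequence once one knows $\Gamma(\Phi)$ is a componentwise-surjective $\Lambda^{e}$-module map). The only point requiring a little care is to make sure that $\mathcal{U}(-,-)\circ(\Phi^{op}\otimes\Phi)$ really is a well-defined object of $\mathrm{Mod}(\Lambda^{e})=\mathrm{Mod}(\Lambda^{op}\otimes_{K}\Lambda)$ and that $\Gamma(\Phi)$ is $\Lambda^{e}$-linear; this follows because $\Phi$ is an additive $K$-functor, so $\Phi^{op}\otimes\Phi\colon\Lambda^{op}\otimes_{K}\Lambda\to\mathcal{U}^{op}\otimes_{K}\mathcal{U}$ is a $K$-functor and precomposition with it is exact, and naturality of $\Gamma(\Phi)$ is exactly functoriality of $\Phi$. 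Writing everything out using the matrix form of morphisms in $\Lambda$ and the composition formula of Definition \ref{defitrinagularmat} makes all the verifications routine.
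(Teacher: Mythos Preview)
Your proposal is correct and is exactly the routine verification the paper has in mind; the paper's own proof consists of the single phrase ``It is straightforward.'' Your write-up simply makes explicit the componentwise description of $\Gamma(\Phi)$, its surjectivity, and the kernel computation, all of which are immediate from the matrix description of $\Lambda$ in Definition~\ref{defitrinagularmat}.
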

\begin{proof}
It is straightforward.
\end{proof}

\begin{remark}\label{projectivida}
We can see that $\mathcal{I}\Big({\left[\begin{smallmatrix}
 T & 0 \\ 
M & U
\end{smallmatrix}\right]},-\Big)\simeq {\Lambda}\Big({\left[\begin{smallmatrix}
 T & 0 \\ 
M & 0
\end{smallmatrix}\right]},-\Big)$, and, from this, it follows that $\mathcal{I}\Big({\left[\begin{smallmatrix}
 T & 0 \\ 
M & U
\end{smallmatrix}\right]},-\Big)$ is projective in $\mathrm{Mod}(\Lambda)$.
\end{remark}

\begin{proposition}\label{proyepihomomat}
The functor $\Gamma(\Phi):\Lambda\longrightarrow  \mathcal{U}(-,-)\circ (\Phi^{op}\otimes\Phi)$ is a homological epimorphism.
\end{proposition}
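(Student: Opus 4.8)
The plan is to show that the ideal $\mathcal{I}$ of Lemma \ref{exacseuqnimpor} is strongly idempotent and then appeal to the definition of homological epimorphism. By Lemma \ref{exacseuqnimpor}, $\mathcal{I}$ is the kernel of $\Gamma(\Phi)$ in $\mathrm{Mod}(\Lambda^{e})$, so $\Lambda/\mathcal{I}$ is canonically isomorphic to $\mathcal{U}(-,-)\circ(\Phi^{op}\otimes\Phi)$ and, under this identification, $\Gamma(\Phi)$ becomes the canonical projection $\Lambda\to\Lambda/\mathcal{I}$. Hence it is enough to prove that $\mathcal{I}$ is strongly idempotent, and for that I would invoke Proposition \ref{proyepihomo}: Remark \ref{projectivida} already supplies that $\mathcal{I}\big(\mathfrak{M},-\big)$ is projective in $\mathrm{Mod}(\Lambda)$ for every $\mathfrak{M}\in\Lambda$, so the only remaining point is that the ideal $\mathcal{I}$ is idempotent.

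To check idempotency, at each object $\mathfrak{M}=\left[\begin{smallmatrix} T & 0 \\ M & U\end{smallmatrix}\right]$ consider the endomorphism $e_{\mathfrak{M}}:=\left[\begin{smallmatrix} 1_{T} & 0 \\ 0 & 0\end{smallmatrix}\right]$, which lies in $\mathcal{I}(\mathfrak{M},\mathfrak{M})=\left[\begin{smallmatrix} \mathcal{T}(T,T) & 0 \\ M(T,U) & 0\end{smallmatrix}\right]$. For an arbitrary $h=\left[\begin{smallmatrix} t & 0 \\ m & 0\end{smallmatrix}\right]\in\mathcal{I}(\mathfrak{M},\mathfrak{M}')$, the composition rule of Definition \ref{defitrinagularmat} yields
$$h\circ e_{\mathfrak{M}}=\left[\begin{smallmatrix} t\circ 1_{T} & 0 \\ m\bullet 1_{T}+0\bullet 0 & 0\circ 0\end{smallmatrix}\right]=\left[\begin{smallmatrix} t & 0 \\ M(1_{T}^{op}\otimes 1_{U'})(m) & 0\end{smallmatrix}\right]=\left[\begin{smallmatrix} t & 0 \\ m & 0\end{smallmatrix}\right]=h,$$
the middle equality using that $M$ is a functor, so that $M(1_{T}^{op}\otimes 1_{U'})=\mathrm{id}$. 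Therefore $h\in\mathcal{I}(\mathfrak{M},\mathfrak{M}')\circ\mathcal{I}(\mathfrak{M},\mathfrak{M})\subseteq(\mathcal{I}\circ\mathcal{I})(\mathfrak{M},\mathfrak{M}')$, so $\mathcal{I}\subseteq\mathcal{I}\circ\mathcal{I}$; the reverse inclusion holds because $\mathcal{I}$ is an ideal, whence $\mathcal{I}\circ\mathcal{I}=\mathcal{I}$. Equivalently, $\mathcal{I}$ is generated by the idempotent morphisms $e_{\mathfrak{M}}$, hence idempotent.

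Combining idempotency with the projectivity of $\mathcal{I}\big(\mathfrak{M},-\big)$, Proposition \ref{proyepihomo} gives that $\mathcal{I}$ is strongly idempotent, which by definition means exactly that the canonical projection $\Lambda\to\Lambda/\mathcal{I}$, that is $\Gamma(\Phi)$, is a homological epimorphism. I expect no genuine obstacle here: all the needed machinery is already in place, the only computation of any substance being the one-line identity $h\circ e_{\mathfrak{M}}=h$, which rests solely on the functoriality of $M$ and the definition of the $\bullet$-action. The one point deserving a moment's care is the bookkeeping that identifies $\Gamma(\Phi)$ in the sense of Lemma \ref{exacseuqnimpor} with the canonical projection $\Lambda\to\Lambda/\mathcal{I}$ to which the notion of homological epimorphism, as fixed after Proposition \ref{caractidem2}, literally applies.
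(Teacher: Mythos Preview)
Your proof is correct and follows the same strategy as the paper: identify $\Lambda/\mathcal{I}\simeq\mathcal{U}$, use Remark \ref{projectivida} for the projectivity of $\mathcal{I}(\mathfrak{M},-)$, and then apply Proposition \ref{proyepihomo}. Your explicit verification that $\mathcal{I}$ is idempotent (via the idempotents $e_{\mathfrak{M}}$) is in fact more careful than the paper's own proof, which invokes Proposition \ref{proyepihomo} without checking this hypothesis.
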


\begin{proof}
We have an epimorphism $\Phi:\Lambda\longrightarrow \mathcal{U}$ and an exact sequence
in $\mathrm{Mod}(\Lambda^{e})$
$$\xymatrix{0\ar[r] & \mathcal{I}\ar[r] & \Lambda\ar[rr]^(.3){\Gamma(\Phi)} & & \mathcal{U}(-,-)\circ (\Phi^{op}\otimes\Phi)\ar[r] & 0.}$$
We notice that $\mathcal{I}$ is an ideal of $\Lambda$ and $\mathcal{U}\simeq \Lambda/\mathcal{I}$.
By Remark  \ref{projectivida}, we get that $\mathcal{I}(\mathfrak{M},-)$ is projective in $\mathrm{Mod}(\Lambda)$ for all $\mathfrak{M}\in \Lambda$. Hence by Proposition \ref{proyepihomo}, we have that $\mathcal{I}$ is strongly idempotent.
\end{proof}

\subsection{One point extension category}

In this section, $\mathcal{U}$ will denote a $K$-category and $M:\mathcal{U}\longrightarrow \mathrm{Mod}(K)$ a $K$-functor. We consider $\mathcal{C}_{K}$ the $K$-category with only one object, namely $\mathrm{obj}(\mathcal{C}_{K}):=\{\ast\}$, and the canonical isomorphism  $\Delta:  \mathcal{C}_{K}^{op}\otimes\mathcal{U}\longrightarrow \mathcal{U}$.
Then, we get  $\underline{M}: \mathcal{C}_{K}^{op}\otimes \mathcal{U}\longrightarrow \mathrm{Mod}(K)$ given as
$\underline{M}:=M\circ\Delta$.
Hence, we can construct the matrix category $\Lambda:=\left[ \begin{smallmatrix}
\mathcal{C}_{K} & 0 \\ 
\underline{M} & \mathcal{U}
\end{smallmatrix}\right].$ This matrix category is called the $\textbf{one-point extension category}$ because it is a generalization of the well-known construction of the one point-extension algebra; see for example page 71 in \cite{AusBook}. \\\
For the case  $\Lambda:=\left[ \begin{smallmatrix}
\mathcal{C}_{K} & 0 \\ 
\underline{M} & \mathcal{U}
\end{smallmatrix}\right]$, the ideal $\mathcal{I}$ in the Lemma \ref{exacseuqnimpor} si given as follows:  for objects $\mathfrak{M}=\left[\begin{smallmatrix}
\ast & 0 \\ 
\underline{M} & U
\end{smallmatrix}\right]$ and $\mathfrak{M}'=\left[\begin{smallmatrix}
\ast & 0 \\ 
\underline{M} & U'
\end{smallmatrix}\right]$ in $\Lambda$ we have that $\mathcal{I}\big(\mathfrak{M},\mathfrak{M}'\big)=\left[\begin{smallmatrix}
 \mathcal{C}_{K}(\ast,\ast) & 0 \\ 
\underline{M}(\ast,U') &  0
\end{smallmatrix}\right]=\left[\begin{smallmatrix} K & 0 \\ 
M(U') &  0
\end{smallmatrix}\right]$.

\begin{lemma}\label{bimoduloproy}
Consider the following object $\mathfrak{N}= \left[\begin{smallmatrix} \ast & 0 \\ 
\underline{M} &  0
\end{smallmatrix}\right]\in \Lambda$ and $\Lambda^{e}\Big((\mathfrak{N},\mathfrak{N}),(-,-)\Big)\in \mathrm{Mod}(\Lambda^{e})$. Then $\mathcal{I}(-,-)\simeq\Lambda^{e}\Big((\mathfrak{N},\mathfrak{N}),(-,-)\Big)$, in particular $\mathcal{I}$ is projective in $\mathrm{Mod}(\Lambda^{e})$.
\end{lemma}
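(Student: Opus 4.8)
The plan is to show that the ideal $\mathcal{I}(-,-)$, viewed as an object of $\mathrm{Mod}(\Lambda^e)$, is representable, namely isomorphic to $\Lambda^e\big((\mathfrak{N},\mathfrak{N}),(-,-)\big)$ where $\mathfrak{N}=\left[\begin{smallmatrix}\ast & 0\\ \underline{M} & 0\end{smallmatrix}\right]$. Once this is established, projectivity in $\mathrm{Mod}(\Lambda^e)$ is immediate since representable functors are projective. First I would unwind both sides at an arbitrary object $(\mathfrak{M},\mathfrak{M}')$ of $\Lambda^e=\Lambda^{op}\otimes_K\Lambda$, with $\mathfrak{M}=\left[\begin{smallmatrix}\ast & 0\\ \underline{M} & U\end{smallmatrix}\right]$ and $\mathfrak{M}'=\left[\begin{smallmatrix}\ast & 0\\ \underline{M} & U'\end{smallmatrix}\right]$. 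On the one hand, by Lemma \ref{exacseuqnimpor} (specialized to the one-point extension at the start of this subsection), $\mathcal{I}(\mathfrak{M},\mathfrak{M}')=\left[\begin{smallmatrix}K & 0\\ M(U') & 0\end{smallmatrix}\right]$, i.e.\ as a $K$-vector space it is $K\oplus M(U')$. On the other hand,
$$\Lambda^e\big((\mathfrak{N},\mathfrak{N}),(\mathfrak{M},\mathfrak{M}')\big)=\Lambda^{op}(\mathfrak{N},\mathfrak{M})\otimes_K\Lambda(\mathfrak{N},\mathfrak{M}')=\Lambda(\mathfrak{M},\mathfrak{N})\otimes_K\Lambda(\mathfrak{N},\mathfrak{M}').$$

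Next I would compute the two tensor factors using Definition \ref{defitrinagularmat}. We have $\Lambda(\mathfrak{N},\mathfrak{M}')=\left[\begin{smallmatrix}\mathcal{C}_K(\ast,\ast) & 0\\ \underline{M}(\ast,U') & 0\end{smallmatrix}\right]=\left[\begin{smallmatrix}K & 0\\ M(U') & 0\end{smallmatrix}\right]$ as a $K$-vector space, so $\Lambda(\mathfrak{N},\mathfrak{M}')\cong K\oplus M(U')$. Similarly $\Lambda(\mathfrak{M},\mathfrak{N})=\left[\begin{smallmatrix}\mathcal{C}_K(\ast,\ast) & 0\\ \underline{M}(\ast,0) & 0\end{smallmatrix}\right]$; since $\underline{M}(\ast,0)=M(0)=0$ (as $M$ is additive, it sends the zero object to $0$), this is just $\left[\begin{smallmatrix}K & 0\\ 0 & 0\end{smallmatrix}\right]\cong K$. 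Hence $\Lambda^e\big((\mathfrak{N},\mathfrak{N}),(\mathfrak{M},\mathfrak{M}')\big)\cong K\otimes_K(K\oplus M(U'))\cong K\oplus M(U')$, which agrees with $\mathcal{I}(\mathfrak{M},\mathfrak{M}')$ as a $K$-vector space.

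Finally, to upgrade this pointwise identification to an isomorphism of functors in $\mathrm{Mod}(\Lambda^e)$, I would exhibit a natural transformation. The Yoneda lemma in the enriched setting says $\mathrm{Hom}_{\mathrm{Mod}(\Lambda^e)}\big(\Lambda^e((\mathfrak{N},\mathfrak{N}),-),\mathcal{I}\big)\cong \mathcal{I}(\mathfrak{N},\mathfrak{N})$, so the candidate morphism corresponds to a distinguished element of $\mathcal{I}(\mathfrak{N},\mathfrak{N})=\left[\begin{smallmatrix}K & 0\\ M(0) & 0\end{smallmatrix}\right]=\left[\begin{smallmatrix}K & 0\\ 0 & 0\end{smallmatrix}\right]$, namely the class of $1_\mathfrak{N}$; this produces the natural transformation $\eta:\Lambda^e((\mathfrak{N},\mathfrak{N}),-)\to\mathcal{I}$ that sends $\phi\otimes\psi\mapsto \psi\circ 1_\mathfrak{N}\circ\phi$ (composition in $\Lambda$), which lands in $\mathcal{I}$ precisely because $1_\mathfrak{N}$ has zero in its lower-right corner and the ideal $\mathcal{I}$ consists of exactly the morphisms that factor through objects with zero $\mathcal{U}$-component. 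The count above shows $\eta$ is a pointwise isomorphism, hence an isomorphism in $\mathrm{Mod}(\Lambda^e)$. Since $\Lambda^e((\mathfrak{N},\mathfrak{N}),-)$ is a representable, hence projective, object of $\mathrm{Mod}(\Lambda^e)$, so is $\mathcal{I}$. The only mildly delicate point — the main obstacle, though it is not serious — is checking carefully that the natural transformation $\eta$ genuinely factors through $\mathcal{I}\hookrightarrow\Lambda$ and that the resulting map is bijective on morphism spaces; this reduces to the computation $\underline{M}(\ast,0)=0$ together with bookkeeping of the block structure of composition in $\Lambda$, both of which are routine.
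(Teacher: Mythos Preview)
Your proposal is correct and follows essentially the same approach as the paper: both compute $\Lambda(\mathfrak{M},\mathfrak{N})\cong K$ and $\Lambda(\mathfrak{N},\mathfrak{M}')\cong K\oplus M(U')$ and identify the result with $\mathcal{I}(\mathfrak{M},\mathfrak{M}')$ via the map $\left[\begin{smallmatrix}a&0\\0&0\end{smallmatrix}\right]\otimes\left[\begin{smallmatrix}b&0\\x&0\end{smallmatrix}\right]\mapsto\left[\begin{smallmatrix}ab&0\\ax&0\end{smallmatrix}\right]$. The only difference is in packaging: the paper writes this isomorphism down directly and then verifies naturality by an explicit computation of $\mathcal{I}(f^{op}\otimes g)$ and $\Lambda^e\big((\mathfrak{N},\mathfrak{N}),f^{op}\otimes g\big)$, whereas you invoke Yoneda on the element $1_{\mathfrak{N}}\in\mathcal{I}(\mathfrak{N},\mathfrak{N})$ to obtain naturality for free and then check the resulting map (which unwinds to exactly the paper's $\Phi_{\mathfrak{M},\mathfrak{M}'}$, namely composition $\psi\circ\phi$) is a pointwise isomorphism. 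Your route is slightly cleaner, but the underlying content is identical. One small wording issue: you justify $M(0)=0$ by calling $M$ ``additive''; in the paper's setup $M$ is only assumed to be a $K$-functor, and the correct reason is that $K$-linearity on hom-sets forces $1_{M(0)}=M(1_0)=M(0)=0$.
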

\begin{proof}
For  
$$f:=\left[\begin{smallmatrix}
\lambda & 0 \\ 
m & \beta
\end{smallmatrix}\right]:\left[\begin{smallmatrix}
 \ast & 0 \\ 
\underline{M} & U_{1}
\end{smallmatrix}\right]=\mathfrak{M}_{1}\longrightarrow \left[\begin{smallmatrix}
\ast & 0 \\ 
\underline{M} & U_{2}
\end{smallmatrix}\right]=\mathfrak{M}_{2}, \text{ and}$$ 
$$g:=\left[\begin{smallmatrix}
\lambda' & 0 \\ 
m' & \beta'
\end{smallmatrix}\right]:\left[\begin{smallmatrix}
 \ast & 0 \\ 
\underline{M} & U_{3}
\end{smallmatrix}\right]=\mathfrak{M}_{3}\longrightarrow \left[\begin{smallmatrix}
\ast & 0 \\ 
\underline{M} & U_{4}
\end{smallmatrix}\right]=\mathfrak{M}_{4},$$  we have $f^{op}\otimes g:(\mathfrak{M}_{2},\mathfrak{M}_{3})\longrightarrow (\mathfrak{M}_{1},\mathfrak{M}_{4})$ a morphism in $\Lambda^{e}$ and hence we have a morphism of abelian groups

$$\mathcal{I}(f^{op}\otimes g):\mathcal{I}(\mathfrak{M}_{2},\mathfrak{M}_{3})=\left[\begin{smallmatrix} K & 0 \\ 
M(U_{3}) &  0
\end{smallmatrix}\right]\longrightarrow \mathcal{I}(\mathfrak{M}_{1},\mathfrak{M}_{4})=\left[\begin{smallmatrix} K & 0 \\ 
M(U_{4}) &  0
\end{smallmatrix}\right],$$
where
$$\mathcal{I}(f^{op}\otimes g)=\Lambda(f^{op}\otimes g)|_{\mathcal{I}(\mathfrak{M}_{2},\mathfrak{M}_{3})}$$
Recall that for $f^{op}\otimes g$ we have that the morphism $$\Lambda(f^{op}\otimes g):\Lambda(\mathfrak{M}_{2},\mathfrak{M}_{3})=
\left[\begin{smallmatrix}  \mathcal{C}_{K}(\ast,\ast) & 0 \\ 
M(U_{3}) &  \mathcal{U}(U_{2},U_{3})
\end{smallmatrix}\right]\longrightarrow \Lambda(\mathfrak{M}_{1},\mathfrak{M}_{4})=\left[\begin{smallmatrix} \mathcal{C}_{K}(\ast,\ast)  & 0 \\ 
M(U_{4}) &  \mathcal{U}(U_{1},U_{4})
\end{smallmatrix}\right]$$ is defined as follows: for $\left[\begin{smallmatrix} \gamma  & 0 \\ 
n &  \theta
\end{smallmatrix}\right]\in 
\left[\begin{smallmatrix}  \mathcal{C}_{K}(\ast,\ast) & 0 \\ 
M(U_{3}) &  \mathcal{U}(U_{2},U_{3})
\end{smallmatrix}\right]$ we set
$$\Lambda(f^{op}\otimes g)\Big(\left[\begin{smallmatrix} \gamma  & 0 \\ 
n &  \theta
\end{smallmatrix}\right]\Big)=g\circ \left[\begin{smallmatrix} \gamma  & 0 \\ 
n &  \theta
\end{smallmatrix}\right]\circ f=\left[\begin{smallmatrix} \lambda'  & 0 \\ 
m' &  \beta'
\end{smallmatrix}\right] \circ \left[\begin{smallmatrix} \gamma  & 0 \\ 
n &  \theta
\end{smallmatrix}\right]\circ \left[\begin{smallmatrix} \lambda  & 0 \\ 
m &  \beta
\end{smallmatrix}\right].$$
Hence, for $\left[\begin{smallmatrix} \gamma  & 0 \\ 
n &  0
\end{smallmatrix}\right]\in 
\left[\begin{smallmatrix}  \mathcal{C}_{K}(\ast,\ast) & 0 \\ 
M(U_{3}) & 0
\end{smallmatrix}\right]=\mathcal{I}(\mathfrak{M}_{2},\mathfrak{M}_{3})$ we have that

$$\mathcal{I}(f^{op}\otimes g)\Big(\left[\begin{smallmatrix} \gamma  & 0 \\ 
n &  0
\end{smallmatrix}\right]\Big)=\left[\begin{smallmatrix} \lambda'  & 0 \\ 
m' &  \beta'
\end{smallmatrix}\right] \circ \left[\begin{smallmatrix} \gamma  & 0 \\ 
n &  0
\end{smallmatrix}\right]\circ \left[\begin{smallmatrix} \lambda  & 0 \\ 
m &  \beta
\end{smallmatrix}\right]=\left[\begin{smallmatrix} \lambda'  & 0 \\ 
m' &  \beta'
\end{smallmatrix}\right] \circ \left[\begin{smallmatrix}\gamma \lambda  & 0 \\ 
n\lambda &  0
\end{smallmatrix}\right]=\left[\begin{smallmatrix}\lambda'\gamma \lambda  & 0 \\ 
m'\bullet(\gamma\lambda)+\beta'\bullet (n\lambda) &  0
\end{smallmatrix}\right].$$
Now, we consider the following object $\mathfrak{N}= \left[\begin{smallmatrix} \ast & 0 \\ 
\underline{M} &  0
\end{smallmatrix}\right]\in \Lambda$ and
$$\Lambda^{e}\Big((\mathfrak{N},\mathfrak{N}),(-,-)\Big)\in \mathrm{Mod}(\Lambda^{e}).$$
 For objects $\mathfrak{M}=\left[\begin{smallmatrix}
\ast & 0 \\ 
\underline{M} & U
\end{smallmatrix}\right]$ and $\mathfrak{M}'=\left[\begin{smallmatrix}
\ast & 0 \\ 
\underline{M} & U'
\end{smallmatrix}\right]$ in $\Lambda$ we have that

$$\Lambda^{e}\Big((\mathfrak{N},\mathfrak{N}),(\mathfrak{M},\mathfrak{M}')\Big)=\Lambda(\mathfrak{M},\mathfrak{N})\otimes_K \Lambda(\mathfrak{N},\mathfrak{M}')=\left[\begin{smallmatrix}
K & 0 \\ 
0 & 0
\end{smallmatrix}\right]\otimes_K  \left[\begin{smallmatrix}
K & 0 \\ 
M(U') & 0
\end{smallmatrix}\right]$$
For $$f:=\left[\begin{smallmatrix}
\lambda & 0 \\ 
m & \beta
\end{smallmatrix}\right]:\left[\begin{smallmatrix}
 \ast & 0 \\ 
\underline{M} & U_{1}
\end{smallmatrix}\right]=\mathfrak{M}_{1}\longrightarrow \left[\begin{smallmatrix}
\ast & 0 \\ 
\underline{M} & U_{2}
\end{smallmatrix}\right]=\mathfrak{M}_{2}, \text{ and}$$ 
$$g:=\left[\begin{smallmatrix}
\lambda' & 0 \\ 
m' & \beta'
\end{smallmatrix}\right]:\left[\begin{smallmatrix}
 \ast & 0 \\ 
\underline{M} & U_{3}
\end{smallmatrix}\right]=\mathfrak{M}_{3}\longrightarrow \left[\begin{smallmatrix}
\ast & 0 \\ 
\underline{M} & U_{4}
\end{smallmatrix}\right]=\mathfrak{M}_{4}$$ we have $f^{op}\otimes g:(\mathfrak{M}_{2},\mathfrak{M}_{3})\longrightarrow (\mathfrak{M}_{1},\mathfrak{M}_{4})$ a morphism in $\Lambda^{e}$ and hence we have a morphism of abelian groups

$$\Lambda^{e}\Big((\mathfrak{N},\mathfrak{N}),(f^{op}\otimes g)\Big):\Lambda^{e}\Big((\mathfrak{N},\mathfrak{N}),(\mathfrak{M}_{2},\mathfrak{M}_{3})\Big)\longrightarrow \Lambda^{e}\Big((\mathfrak{N},\mathfrak{N}),(\mathfrak{M}_{1},\mathfrak{M}_{4})\Big).$$
In this case, we have that
\begin{align*}
\Lambda^{e}\Big((\mathfrak{N},\mathfrak{N}),(\mathfrak{M}_{2},\mathfrak{M}_{3})\Big)=\Lambda^{op}(\mathfrak{N},\mathfrak{M}_{2})\otimes_K \Lambda(\mathfrak{N},\mathfrak{M}_{3}) & =\Lambda(\mathfrak{M}_{2},\mathfrak{N})\otimes_K \Lambda(\mathfrak{N},\mathfrak{M}_{3})\\
& = \left[\begin{smallmatrix}
K & 0 \\ 
0 & 0
\end{smallmatrix}\right]\otimes_K  \left[\begin{smallmatrix}
K & 0 \\ 
M(U_{3}) & 0
\end{smallmatrix}\right].
\end{align*}
Therefore,  for  $\left[\begin{smallmatrix}
\gamma & 0 \\ 
0 & 0
\end{smallmatrix}\right]\otimes  \left[\begin{smallmatrix}
\delta & 0 \\ 
n & 0
\end{smallmatrix}\right]\in  \left[\begin{smallmatrix}
K & 0 \\ 
0 & 0
\end{smallmatrix}\right]\otimes_K  \left[\begin{smallmatrix}
K & 0 \\ 
M(U_{3}) & 0
\end{smallmatrix}\right]$ we have that

\begin{align*}
 \Lambda^{e}\Big((\mathfrak{N},\mathfrak{N}),(f^{op}\otimes g)\Big)\Big(\left[\begin{smallmatrix}
\gamma & 0 \\ 
0 & 0
\end{smallmatrix}\right]\otimes  \left[\begin{smallmatrix}
\delta & 0 \\ 
n & 0
\end{smallmatrix}\right]\Big) & =(f^{op}\otimes g)\circ \Big(\left[\begin{smallmatrix}
\gamma & 0 \\ 
0 & 0
\end{smallmatrix}\right]\otimes  \left[\begin{smallmatrix}
\delta & 0 \\ 
n & 0
\end{smallmatrix}\right]\Big)\\
& =\Big(\left[\begin{smallmatrix}
\gamma & 0 \\ 
0 & 0
\end{smallmatrix}\right]\circ f \Big)\otimes \Big(g\circ \left[\begin{smallmatrix}
\delta& 0 \\ 
n & 0
\end{smallmatrix}\right]\Big)\\
& = \Big(\left[\begin{smallmatrix}
\gamma & 0 \\ 
0 & 0
\end{smallmatrix}\right]\circ \left[\begin{smallmatrix}
\lambda & 0 \\ 
m & \beta
\end{smallmatrix}\right]\Big)\otimes \Big(\left[\begin{smallmatrix}
\lambda' & 0 \\ 
m' & \beta'
\end{smallmatrix}\right]\left[\begin{smallmatrix}
\delta& 0 \\ 
n & 0
\end{smallmatrix}\right]\Big)\\
& = \left[\begin{smallmatrix}
\gamma\lambda & 0 \\ 
0 & 0
\end{smallmatrix}\right]\otimes  \left[\begin{smallmatrix}
\lambda' \delta& 0 \\ 
m'\bullet \delta+\beta'\bullet n & 0
\end{smallmatrix}\right].
\end{align*}

For $\mathfrak{M}=\left[\begin{smallmatrix}
\ast & 0 \\ 
\underline{M} & U
\end{smallmatrix}\right]$ and $\mathfrak{M}'=\left[\begin{smallmatrix}
\ast & 0 \\ 
\underline{M} & U'
\end{smallmatrix}\right]$ in $\Lambda$, we consider the canonical isomorphism
$$\Phi_{\mathfrak{M},\mathfrak{M}'}:\left[\begin{smallmatrix}
K & 0 \\ 0 & 0
\end{smallmatrix}\right]\otimes \left[\begin{smallmatrix}
K & 0 \\ M(U') & 0
\end{smallmatrix}\right]\longrightarrow  \left[\begin{smallmatrix}
K & 0 \\ M(U') & 0
\end{smallmatrix}\right]$$
defined as 
$$\Phi_{\mathfrak{M},\mathfrak{M}'}\Big(\left[\begin{smallmatrix}
a & 0 \\ 0 & 0
\end{smallmatrix}\right]\otimes \left[\begin{smallmatrix}
b & 0 \\ x & 0
\end{smallmatrix}\right]\Big)= \left[\begin{smallmatrix}
ab & 0 \\ ax & 0
\end{smallmatrix}\right],$$
where $ax$ is defined using the structure of $K$-vector space on $M(U')$.\\
Hence we have the following commutative diagram
$$\xymatrix{{\left[\begin{smallmatrix}
K & 0 \\ 0 & 0
\end{smallmatrix}\right]\otimes \left[\begin{smallmatrix}
K & 0 \\ M(U_{3}) & 0
\end{smallmatrix}\right]}\ar[rr]^{\Phi_{\mathfrak{M}_{2},\mathfrak{M}_{3}}}\ar[d]_{\Lambda^{e}\Big((\mathfrak{N},\mathfrak{N}),(f^{op}\otimes g)\Big)}& &  {\left[\begin{smallmatrix}
K & 0 \\ M(U_{3}) & 0
\end{smallmatrix}\right]}\ar[d]^{\mathcal{I}(f^{op}\otimes g)}\\
{\left[\begin{smallmatrix}
K & 0 \\ 0 & 0
\end{smallmatrix}\right]\otimes \left[\begin{smallmatrix}
K & 0 \\ M(U_{4}) & 0
\end{smallmatrix}\right]}\ar[rr]_{\Phi_{\mathfrak{M}_{1},\mathfrak{M}_{4}}} & &  {\left[\begin{smallmatrix}
K & 0 \\ M(U_{4}) & 0
\end{smallmatrix}\right]}}$$
Hence, $\mathcal{I}(-,-)\simeq\Lambda^{e}\Big((\mathfrak{N},\mathfrak{N}),(-,-)\Big)$.
\end{proof}

\begin{corollary}\label{singularonepoint}
Let $\mathcal{U}$ be a $K$-category and $M:\mathcal{U}\longrightarrow \mathrm{Mod}(K)$ a $K$-functor. Consider the one-point extension category $\Lambda:=\left[ \begin{smallmatrix}
\mathcal{C}_{K} & 0 \\ 
\underline{M} & \mathcal{U}
\end{smallmatrix}\right].$ Then there exists an equivalence of triangulated categories
$$\mathbf{D}_{sg}(\mathrm{Mod}(\Lambda))\simeq \mathbf{D}_{sg}(\mathrm{Mod}(\mathcal{U})).$$
\end{corollary}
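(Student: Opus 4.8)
The plan is to deduce this directly from Theorem \ref{teoremaprincipal}, applied to the $K$-category $\mathcal{C}:=\Lambda$ together with the ideal $I:=\mathcal{I}$ produced by Lemma \ref{exacseuqnimpor} in the one-point extension situation (i.e. with $\mathcal{T}=\mathcal{C}_{K}$, $\underline{M}=M\circ\Delta$, and $\Phi:\Lambda\longrightarrow\mathcal{U}$ the projection). To invoke that theorem I must verify its two hypotheses for this $\mathcal{I}$: that $\mathcal{I}$ is strongly idempotent, and that $\mathcal{I}$ has finite projective dimension as an object of $\mathrm{Mod}(\Lambda^{e})$.

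First I would record that $\mathcal{I}$ is strongly idempotent. This is exactly Proposition \ref{proyepihomomat}: by Remark \ref{projectivida} the functor $\mathcal{I}(\mathfrak{M},-)$ is projective in $\mathrm{Mod}(\Lambda)$ for every $\mathfrak{M}\in\Lambda$ (one has $\mathcal{I}(\left[\begin{smallmatrix} T & 0 \\ M & U\end{smallmatrix}\right],-)\simeq\Lambda(\left[\begin{smallmatrix} T & 0 \\ M & 0\end{smallmatrix}\right],-)$), and since $\mathcal{I}$ is idempotent — being the kernel ideal of $\Gamma(\Phi)$, or directly from its matrix description in Lemma \ref{exacseuqnimpor} — Proposition \ref{proyepihomo} gives that $\mathcal{I}$ is strongly idempotent; equivalently, $\pi\colon\Lambda\to\Lambda/\mathcal{I}$ is a homological epimorphism.

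Next I would observe that $\mathcal{I}$ has finite projective dimension in $\mathrm{Mod}(\Lambda^{e})$: indeed Lemma \ref{bimoduloproy} identifies the bimodule $\mathcal{I}(-,-)$ with the representable functor $\Lambda^{e}\big((\mathfrak{N},\mathfrak{N}),(-,-)\big)$ for $\mathfrak{N}=\left[\begin{smallmatrix}\ast & 0 \\ \underline{M} & 0\end{smallmatrix}\right]$, so $\mathcal{I}$ is in fact projective in $\mathrm{Mod}(\Lambda^{e})$, hence of projective dimension $0<\infty$. With both hypotheses in hand, Theorem \ref{teoremaprincipal} yields a triangle equivalence $\mathbf{D}_{sg}(\mathrm{Mod}(\Lambda))\simeq\mathbf{D}_{sg}(\mathrm{Mod}(\Lambda/\mathcal{I}))$.

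Finally I would pass from $\Lambda/\mathcal{I}$ to $\mathcal{U}$. The datum in Lemma \ref{exacseuqnimpor} already identifies $\mathcal{U}(-,-)\circ(\Phi^{op}\otimes\Phi)$ with $(\Lambda/\mathcal{I})(-,-)$, i.e. it exhibits a canonical isomorphism of $K$-categories $\Lambda/\mathcal{I}\simeq\mathcal{U}$; this induces an equivalence $\mathrm{Mod}(\Lambda/\mathcal{I})\simeq\mathrm{Mod}(\mathcal{U})$, hence an equivalence of bounded derived categories carrying perfect complexes to perfect complexes, and therefore an equivalence of singularity categories $\mathbf{D}_{sg}(\mathrm{Mod}(\Lambda/\mathcal{I}))\simeq\mathbf{D}_{sg}(\mathrm{Mod}(\mathcal{U}))$. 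Composing the two equivalences gives $\mathbf{D}_{sg}(\mathrm{Mod}(\Lambda))\simeq\mathbf{D}_{sg}(\mathrm{Mod}(\mathcal{U}))$. Every step is a citation of an earlier result or the formal fact that equivalent categories have equivalent module (hence singularity) categories, so there is no genuine obstacle; the only point needing a little care is bookkeeping the identification $\Lambda/\mathcal{I}\simeq\mathcal{U}$ at the level of module categories, which is routine.
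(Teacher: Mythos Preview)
Your proposal is correct and follows essentially the same approach as the paper: you cite Proposition \ref{proyepihomomat} for strong idempotency of $\mathcal{I}$, Lemma \ref{bimoduloproy} for the (finite) projective dimension of $\mathcal{I}$ in $\mathrm{Mod}(\Lambda^{e})$, and then apply Theorem \ref{teoremaprincipal}, exactly as the paper does. Your explicit remark on the identification $\Lambda/\mathcal{I}\simeq\mathcal{U}$ is a small elaboration the paper leaves implicit, but the argument is the same.
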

\begin{proof}
By Lemma \ref{proyepihomomat}, we have that $\Gamma(\Phi):\Lambda\longrightarrow \mathcal{U}(-,-)\circ \Phi^{op}\otimes \Phi$ is an homological epimorphism. By Lemma \ref{bimoduloproy}, we have that $\mathcal{I}=\mathrm{Ker}(\Gamma(\Phi))$ is projective in $\mathrm{Mod}(\Lambda^{e})$. By  Theorem \ref{teoremaprincipal}, we conclude that 
$\mathbf{D}_{sg}(\mathrm{Mod}(\Lambda))\simeq \mathbf{D}_{sg}(\mathrm{Mod}(\mathcal{U})).$
\end{proof}

In order to give an explicit example we recall the following notions.

\subsection{Quivers, path algebras and path categories}  
A quiver $\Delta$ consists of a set of vertices $\Delta_{0}$  and a set of arrows $\Delta_{1}$ which is the disjoint union of sets 
$\Delta(x,y)$, where the elements of $\Delta(x,y)$ are the arrows $\alpha:x\rightarrow y$ from the vertex $x$ to the vertex $y$. Given a quiver $\Delta$, its path category $\mathrm{Pth}\Delta$ has as objects the vertices of $\Delta$  and the morphisms $x\rightarrow y$ are paths  from $x$ to $y$ which are by definition the formal compositions $\alpha_{n}\cdots\alpha_{1}$ where $\alpha_{1}$ starts in $x$, $\alpha_{n}$  ends in $y$ and the end point of $\alpha_{i}$ coincides with the start point of $\alpha_{i+1}$ for all $i\in\{1,\ldots,n-1\}$. The positive integer $n$ is called the length of the path. There is a path $\xi_{x}$ of length $0$ for each vertex to itself. The composition in $\mathrm{Pth}\Delta$ of paths of positive length is just concatenations whereas the $\xi_{x}$ act as identities.

Given a quiver $\Delta$  and a field $K$, an additive  $K$-category $K\Delta$ is associated to $\Delta$ by taking as the indecomposable objects in $K\Delta$  the vertices of $\Delta$ and hence an arbitrary object of $K\Delta $ is a direct sum of indecomposable objects. Given $x,y\in\Delta_{0}$  the set of maps from $x$ to $y$ is given by the  $K$-vector space with basis the set of all paths from $x$ to $y$. The composition in $K\Delta$ is of course obtained by $K$-linear extension of the composition in $\mathrm{Pth} \Delta$, that is, the product of two composable paths is defined to be the corresponding composition, the product of two non-composable paths is, by definition, zero. In this way we obtain an associative $K$-algebra which has unit element if and only if $\Delta_{0}$ is finite (the unit element is given by $\sum _{x\in \Delta_{0}}\xi_{x}$).\\
In $K\Delta$, we denote by $K\Delta^{+}$ the ideal generated by all arrows and by $(K\Delta^{+})^{n}$ the ideal generated by all paths of length $\ge n$.\\
Given vertices $x,y\in\Delta_{0}$, a finite linear combination $\sum_{w}\lambda_{w}w$, where $\lambda_{w}\in K$ and $w$ are paths of length $\ge 2$ from $x$ to $y$, is called a relation on $\Delta$. It can be seen that any ideal $I\subset (K\Delta^{+})^{2}$ can be generated, as an ideal, by relations. If  $I$ is generated as an ideal by the set $\{\rho_{i}\mid i\}$ of relations, we write $I=\langle \rho_{i}\mid i\rangle$.\\
Given a quiver $\Delta=(\Delta_{0},\Delta_{1})$, a representation $V=(V_{x},f_{\alpha})$ of $\Delta$ over $K$ is given by vector spaces $V_{x}$ for all $x\in \Delta_{0}$, and linear maps $f_{\alpha}:V_{x}\rightarrow V_{y}$, for any arrow $\alpha:x\rightarrow y$.   The category of representations of $\Delta$ is the category with objects the representations, and a morphism of representations  $h=(h_{x}): V\rightarrow V'$ is given by maps $h_{x}:V_{x}\rightarrow V'_{x}$ $(x\in\Delta_{0})$ such that $h_{y}f_{\alpha}=f_{\alpha'}h_{x}$ for any $\alpha:x\rightarrow y$. The category of representations of $\Delta$ is denoted by $\mathrm{Rep}(\Delta)$.\\
Given a set of relations $\langle\rho_{i}\!\!\mid \! i\rangle$ of $\Delta$,  we denote by $K\Delta/\langle\rho_{i}\!\!\mid \! i\rangle$ the path category given by the quiver $\Delta$ and relations $\rho_{i}$.  The category  of functors $\mathrm{Mod}\Big(K\Delta/\langle \rho_{i}\!\!\mid \! i\rangle\Big):=\Big(K\Delta/\langle \rho_{i}\!\!\mid \! i\rangle, \mathrm{Mod}(K)\Big)$ can be identified with the representations  of $\Delta$ satisfying the relations $\rho_{i}$ which is denoted by $\mathrm{Rep}(\Delta,\{\rho_{i}\!\! \mid \! i\})$,  (see \cite[p. 42]{RingelTame}).\\
Consider a field $K$  and the infinite quiver 
$$Q:\xymatrix{1\ar[r]^{\alpha_{1}}& 2\ar[r]^{\alpha_{2}}  &\cdots \ar[r]  &  k\ar[r]^{\alpha_{k}} & k+1\ar[r]  & \cdots\ar[r] & \cdots }$$
Then we have the path $K$-category $\mathcal{U}=KQ$. Consider the left $KQ$-module $M$ given by the representation

$$M:\xymatrix{0\ar[r]^{0}& K\ar[r]^{1} & K\ar[r]^{1} & K\ar[r]^{1}  &\cdots \ar[r]  &  K\ar[r]^{1} & K\ar[r]  & \cdots\ar[r] & \cdots }$$

Then the one-point extension category  $\Lambda:=\left[ \begin{smallmatrix}
\mathcal{C}_{K} & 0 \\ 
\underline{M} & \mathcal{U}
\end{smallmatrix}\right]$  has the following quiver

$$K\overline{Q}:\xymatrix{ & 0\ar[d]_{\beta_{1}}\ar[dr]^{\beta_{2}}\ar[drrr]^{\beta_{k-1}}\ar[drrrr]^{\beta_{k}}\\
1\ar[r]_{\alpha_{1}}& 2\ar[r]_{\alpha_{2}}  & 3\ar[r] & \cdots \ar[r]  &  k\ar[r]_{\alpha_{k}} & k+1\ar[r]  & \cdots\ar[r] & \cdots }$$
where there is an arrow $\beta_{i}:0\longrightarrow i+1$ for all integer  $i\geq 1$ and
with relations $R=\{\alpha_{i+1}\beta_{i}-\beta_{i+1}\}_{i\geq 1}$.
Hence in this case we have an equivalence of triangulated categories
$$\mathbf{D}_{sg}\Big(\mathrm{Mod}(K\overline{Q}/R)\Big)\simeq \mathbf{D}_{sg}\Big(\mathrm{Mod}(KQ)\Big).$$


\section*{Acknowledgements}
The authors thank to the project PAPIIT-Universidad Nacional Aut\'onoma de M\'exico IN100124.

\bibliographystyle{amsplain}

\end{document}